\newtheorem{theorem}{Theorem}[section]
\newtheorem{lemma}[theorem]{Lemma}
\theoremstyle{definition}
\newtheorem{definition}[theorem]{Definition}
\newtheorem{example}[theorem]{Example}
\theoremstyle{remark}
\newtheorem{remark}[theorem]{Remark}
\numberwithin{equation}{section}
\else\declaretheorem[parent=section]{theorem}\fi
\else\declaretheorem[sibling=theorem]{corollary}\fi
\else\declaretheorem[sibling=theorem]{lemma}\fi
\else\declaretheorem[sibling=theorem]{proposition}\fi
\else\declaretheorem[sibling=theorem, style=definition]{definition}\fi
\else\declaretheorem[sibling=theorem, style=remark]{remark}\fi
\providecommand {\N}{{\bf N}}
\providecommand {\Z}{{\bf Z}}
\providecommand {\C}{{\bf C}}
\renewcommand {\P}{{\bf P}}
\providecommand {\from}{{\colon}}
\providecommand{\spec}{\operatorname{Spec}}
\providecommand{\proj}{\operatorname{Proj}}
\providecommand{\Aut}{\operatorname{Aut}}
\providecommand{\codim}{\operatorname{codim}}
\tikzset{every label/.style={font=\scriptsize}}
\tikzset{every loop/.style={}}
\tikzstyle{curve} = [circle, draw, thick, inner sep=1.5pt, font=\scriptsize]
\tikzstyle{int} = [fill=white, inner sep=1pt, font=\scriptsize]
\DeclareMathOperator{\sym}{{\mathfrak S}}
\DeclareMathOperator{\F}{{\mathbf F}}
\renewcommand{\N}{{\mathbf N}}
\DeclareMathOperator{\br}{br}
\DeclareMathOperator{\Bl}{Bl}
\DeclareMathOperator{\Def}{Def}
\DeclareMathOperator{\BrCov}{BrCov}
\DeclareMathOperator{\Sect}{Sect}
\DeclareMathOperator{\Maps}{K}
\DeclareMathOperator{\Norm}{Norm}
\DeclareMathOperator{\id}{id}
\DeclareMathOperator{\Schemes}{{\underline{Schemes}}}
\DeclareMathOperator{\Stab}{Stab}
\newcommand{\orb}[1]{\mathcal{#1}}
\newcommand{\stack}[1]{\mathscr{#1}}
\newcommand{\coarse}[1]{{|#1|}}
\renewcommand{\O}{{\mathcal O}}
\begin{document}

\title[Stacky curves and plane quintics]{Covers of stacky curves and limits of plane quintics}
\author{Anand Deopurkar}
\email{anand.deopurkar@anu.edu.au}
\address{Mathematical Sciences Institute, The Australian National University, Canberra, ACT 2601, Australia}

\subjclass{14H10, 14H45, 14H50}

\maketitle

\begin{abstract}
  We construct a well-behaved compactification of the space of finite covers of a stacky curve using admissible cover degenerations.
  Using our construction, we compactify the space of tetragonal curves on Hirzebruch surfaces.
  As an application, we explicitly describe the boundary divisors of the closure in $\overline{\orb M}_6$ of the locus of smooth plane quintic curves.
\end{abstract}

\section{Introduction}

One of the great successes of moduli theory is the construction of an explicit and well-behaved compactification of the moduli space of curves.
Constructing similarly well-behaved compactifications of spaces of higher dimensional objects remains a challenge.
Towards this goal, we can first consider the case of objects fibered over curves.
In a landmark paper \cite{abr.vis:02}, Abramovich and Vistoli constructed a compact moduli space of such objects by viewing a fibration $S \to P$ as a map from $P$ to the moduli stack that classifies the fibers.
They constructed a compact moduli space of maps from curves to a proper Deligne--Mumford stack, analogous to the space of maps from curves to a projective scheme, due to Kontsevich.

The idea of compactifying the moduli of fibered objects as maps suffers from one defect.
The resulting compactifications are often ``too big.''
They contain an open subset that parametrizes nice objects of geometric interest, but the closure of this set is typically not the whole compactification.
There are often irreducible components whose general members are degenerate, and usually it is difficult to isolate the ``good'' components.

\subsection{Results}
The first result of this paper gives a good compactification of the space of maps to a smooth, one-dimensional stack, inspired by a similar compactification in the non-stacky setting due to Harris--Mumford \cite{har.mum:82}  and Abramovich--Corti--Vistoli \cite{abr.cor.vis:03}.
\begin{theorem}[\autoref{prop:technical_main}, roughly stated]\label{thm:technical_main}
  Let $\stack{X}$ be a proper, one dimensional, smooth Deligne--Mumford stack of finite type over $\C$.
  The space of finite coverings of $\stack X$ admits a compactification that parametrizes stacky admissible covers.
  This compactification is smooth, has a normal crossings boundary, and admits a morphism to the Abramovich--Vistoli space of maps to $\stack X$.
\end{theorem}

The heart of the paper is an application to the following classical question: which stable curves of genus $g = {{d-1} \choose 2}$ are limits of smooth plane curves of degree $d$?
The question is vacuous for $d = 1$ and $d = 2$.
For $d = 3$, we know that all smooth genus 1 curves arise as plane cubics.
For $d = 4$, we recall that all non-hyperelliptic smooth genus 3 curves arise as plane quartics by their canonical embedding; the closure of such curves is all of $\overline{\orb M}_3$.
The first non-trivial case is the case of quintics.
\begin{theorem}\label{thm:quintic_boundary}
  Let $Q \subset {\orb M}_6$ be the locus of plane quintic curves and $\overline Q$ its closure in $\overline {\orb M}_6$.
  The boundary $\overline Q \cap \left(\overline {\orb M}_6 \setminus {\orb M}_6\right)$ of $Q$ is the union of $13$ irreducible divisorial components, which we describe explicitly.
\end{theorem}

The following is a description of the generic points of the boundary divisors of $Q$, grouped by their dual graphs.
The label on each vertex denotes the normalization of the irreducible component corresponding to the vertex.
For example, (2) represents an irreducible nodal curve whose normalization is hyperelliptic of genus 5.

\begin{itemize}
\item With the dual graph
  \tikz[baseline={(0,-0.1)}]{
    \draw 
    (0,0) node (X) [curve, label={180:$X$}] {} 
    (X) edge [loop right] (X);
  }
  \begin{enumerate}
  \item
    \label{main-1}
    A nodal plane quintic.
  \item 
    \label{main-2}
    $X$ hyperelliptic of genus $5$.
  \end{enumerate}
  
\item With the dual graph
  \tikz[baseline={(0,-0.1)}]{
    \draw 
    (0,0) node (X) [curve, label={180:$X$}] {} 
    (1,0) node (Y) [curve, label={0:$Y$}] {}
    (X) edge node[int]{$p$} (Y);
  }
  \begin{enumerate}[resume]
  \item
    \label{main-3}
    $(X,p)$ the normalization of a cuspidal plane quintic, and $Y$ of genus 1.
  \item 
    \label{main-4}
    $X$ of genus 2, $Y$ Maroni special of genus $4$, $p \in X$ a Weierstrass point, and $p \in Y$ a ramification point of the unique degree 3 map $Y \to \P^1$.
  \item
    \label{main-5}
    $X$ a plane quartic, $Y$ hyperelliptic of genus 3, $p \in X$ a point on a bitangent, and $p \in Y$ a Weierstrass point.
  \item 
    \label{main-6}
    $X$ a plane quartic, $Y$ hyperelliptic of genus 3, and $p \in X$ a hyperflex ($K_X = 4p$).
  \item 
    \label{main-7}
    $X$ hyperelliptic of genus $4$, $Y$ of genus 2, and $p \in X$ a Weierstrass point.
  \item 
    \label{main-8}
    $X$ of genus 1, and $Y$ hyperelliptic of genus 5.
  \end{enumerate}
  
\item With the dual graph
  \tikz[baseline={(0,-0.1)}]{
    \draw 
    (0,0) node (X) [curve, label={180:$X$}] {}
    (1,0) node (Y) [curve, label={0:$Y$}] {}
    (X) edge [bend right=45]node[int] {$q$} (Y)
    (X) edge [bend left=45] node[int] {$p$} (Y);
  } 
  
  \begin{enumerate}[resume]
  \item 
    \label{main-9}
    $X$ Maroni special of genus $4$, $Y$ of genus $1$, and $p, q \in X$ on a fiber of the unique degree 3 map $X \to \P^1$.
  \item 
    \label{main-10}
    $X$ hyperelliptic of genus 3, $Y$ of genus $2$, and $p \in Y$ a Weierstrass point.
  \item 
    \label{main-11}
    $X$ of genus $2$, $Y$ a plane quartic, $p, q \in X$ hyperelliptic conjugate, and the line through $p$, $q$ tangent to $Y$ at a third point.
  \item 
    \label{main-12}
    $X$ hyperelliptic of genus $3$, $Y$ of genus $2$, and $p, q \in X$ hyperelliptic conjugate.
  \end{enumerate}

\item With the dual graph
  \tikz[baseline={(0,-0.1)}]{
    \draw 
    (0,0) node (X) [curve, label={180:$X$}] {}
    (1,0) node (Y) [curve, label={0:$Y$}] {}
    (X) edge [bend right=45] (Y)
    (X) edge (Y)
    (X) edge [bend left=45] (Y);
  }
  \begin{enumerate}[resume]
  \item 
    \label{main-13}
    $X$ hyperelliptic of genus $3$, and $Y$ of genus $1$.
  \end{enumerate}
\end{itemize}
The closure of $Q$ in $\orb M_6$ is known to be the union of $Q$ with the locus of hyperelliptic curves \cite{gri:85}.
This result also follows from our techniques.

\subsection{Connection between plane quintics and stacky covers}
Let \[\stack X = [\overline M_{0,4}/\sym_4].\]
Consider a cover (representable, finite, flat morphism) $\phi \from \mathcal P \to \stack X$, where $\mathcal P$ is a smooth orbifold curve.
Denote the coarse space of $\mathcal P$ by $P$.
Assuming $\phi$ has a suitable ramification profile over the boundary point of $\stack X$, the moduli description of $\stack X$ implies that the map $\phi$ is equivalent to a pair $(S \to P, C)$, where $S \to P$ is a $\P^1$-bundle and $C \subset S$ is a curve of relative degree 4.
If the genus of $P$ is $0$, then we can view the space of covers $\phi$ as the space of 4-gonal curves on Hirzebruch surfaces.
The boundary points of the compactification given by \autoref{thm:technical_main} correspond to nodal curves on certain (reducible) degenerations of Hirzebruch surfaces.
Since the curves appearing at the boundary are at worst nodal, we have a forgetful map to $\overline {\orb M}_g$ (where the $g$ is related to the degree of $\phi$).

The moduli space of covers $\phi$ turns out to have two connected components, distinguished by the parity of the $\P^1$-bundle $S \to P$.
Taking the odd component for $g = 6$ yields a compactification of 4-gonal genus 6 curves on $\F_1$.
Under the blow-down $\F_1 \to \P^2$, we see that such curves are precisely the plane quintics.
The image in $\overline {\orb M}_6$ of the odd component of our moduli space gives the closure of the set of plane quintics.

\subsection{Relationship with previous work}
The problem of finding limits of plane curves has led to a search for good compactifications of pairs $(S, C)$ where $S$ is a surface and $C$ is a curve on $S$.
Hassett has described such a compactification for $S = \P^2$ and $C$ of degree 4 \cite{has:99}.
Hacking has described such a compactification in a weighted setting for $S = \P^2$ and $C$ of arbitrary degree  \cite{hac:04}.
In Hacking's compactification, the curve $C$ acquires worse than nodal singularities, due to the choice of weighting.
The idea behind Hassett and Hacking's construction is to view $(S, C)$ as a (weighted) log pair and construct a compactification following Koll\'ar--Shepherd-Barron \cite{kol.she:88} and Alexeev \cite{ale:96}.
It will be interesting to compare the compactifications of $(S, C)$ obtained using our approach with the one obtained using the Koll\'ar--Shepherd-Barron--Alexeev approach.

By definition, the curves in $\overline Q$ are stable reductions of singular quintic plane curves.
Our method, however, does not identify the singular quintics whose stable reductions yield the limiting curves.
Nevertheless, we can use Hassett's methods \cite{has:00} to find such singular quintics \emph{a posteriori} (see \autoref{table:local_stable_reduction}).
Note that \autoref{table:local_stable_reduction} confirms that the curves listed in \autoref{thm:quintic_boundary} arise as limits of plane quintics, but it does not suffice to conclude that the list is exhaustive.
Also, the correspondence in \autoref{table:local_stable_reduction} is not necessarily complete---there may be other singular quintics that yield the same stable limits.
Observe that all the boundary divisors are obtained from the stable reduction of $A$ and $D$ singularities.
Is there an \emph{a priori} reason?

\begin{table}
  \centering
  \rowcolors{2}{gray!15}{white}
  \begin{tabular}{p{.28\textwidth} p{.65\textwidth}}
    \hline
    \rowcolor{gray!25}
    Divisor in \autoref{thm:quintic_boundary} & Singular plane curve\\
     \hline
     \eqref{main-1}, \eqref{main-9}, \eqref{main-11}, \eqref{main-12} & An irreducible plane curve with an $A_{2n+1}$ singularity, $n = 0, 1, 2, 3$\\
     \eqref{main-3}, \eqref{main-4}, \eqref{main-5}, \eqref{main-7} & An irreducible plane curve with an $A_{2n}$ singularity,  $n = 1, 2, 3, 4$\\
     \eqref{main-6} & The union of a smooth plane quartic and a hyperflex line ($A_7$ singularity)\\
     \eqref{main-8} & The union of a smooth conic and a 6-fold tangent smooth cubic ($A_{11}$ singularity)\\
     \eqref{main-10} & An irreducible plane curve with a $D_5$ singularity\\
     \eqref{main-13} & An irreducible plane curve with a $D_4$ singularity\\
     \eqref{main-2} & The union of a nodal cubic and a smooth conic with 5-fold tangency along a nodal branch ($D_{12}$ singularity).
  \end{tabular}
  \caption{Singular plane curves that yield the divisors in \autoref{thm:quintic_boundary}}
  \label{table:local_stable_reduction}
\end{table}

As mentioned before, for $\stack X = [\overline M_{0,4}/\sym_4]$ the space of maps $\phi \from \orb P \to \stack X$ splits into two connected components, corresponding to the parity of the resulting Hirzebruch surface.
There is an alternate explanation of this parity, given by Vakil \cite{vak:01} (though not in the language of stacky covers).
The action of $\sym_4$ on $\overline M_{0,4}$ has as a kernel the Klein four group
\[K = \{\id, (12)(34), (13)(24), (14)(23)\} \subset \sym_4.\]
The quotient $\sym_4/K$ is isomorphic to $\sym_3$.
Let $\psi \from \orb P \to [\overline M_{0,4}/\sym_3]$ be the composition of $\phi$ with the natural map $[\overline M_{0,4}/\sym_4] \to [\overline M_{0,4}/\sym_3]$.

The moduli interpretation of $\psi$ gives rise to a trigonal curve $D$, and $\phi$ gives rise to a theta characteristic on $D$ (see \autoref{sec:parity}).
The parity is manifested as the parity of this theta characteristic.
The procedure of obtaining the trigonal curve $D$ from the tetragonal curve $C$ is the classical construction of the cubic resolvant, studied geometrically by Recillas \cite{rec:73}.

\subsection{More applications}
In addition to $\stack X = [\overline M_{0,4}/\sym_4]$, there are many other one-dimensional moduli stacks $\stack X$ for which \autoref{thm:technical_main} will yield nice compactifications of interesting objects.
For example, taking $\stack X = \overline {\stack M}_{1,1}$ will give a nice compactification of the space of elliptic fibrations.
Similarly, taking $\stack X$ to be the moduli stack of $\Lambda$-polarized K3 surfaces, where $\Lambda$ is a lattice of rank 19, will give a nice compactification of threefolds fibered in K3 surfaces.
Such fibrations play a key role in mirror symmetry (see, for example, \cite{dor.har.nov.ea:15}).

\subsection{Outline of the paper}
\autoref{sec:technical} contains the construction of the admissible cover compactification of covers of $\stack X$.
\autoref{sec:hirz} discusses the case of $\stack X = [\overline M_{0,4}/\sym_4]$.
\autoref{sec:quintics} specializes to the case of plane quintics.
A large part of the paper is concerned with deciphering the geometry of orbifold curves on orbifold surfaces in classical terms.
\autoref{sec:orbiscrolls} describes the geometry of $\P^1$ bundles over orbifold curves that we need for this purpose.
\autoref{thm:quintic_boundary} follows from combining \autoref{prop:not_contracted_1}, \autoref{thm:1-5}, \autoref{thm:6}, \autoref{thm:7}, and \autoref{thm:8}.

\subsection{Notation and conventions}
We work over the complex numbers $\C$.
A \emph{stack} means a Deligne--Mumford stack.
An \emph{orbifold} means a Deligne--Mumford stack without generic stabilizers.
Orbifolds are usually denoted by curly letters ($\orb X, \orb Y$), and stacks with non-trivial generic stabilizers by curlier letters ($\stack X, \stack Y$).
Coarse spaces are denoted by the absolute value sign ($\coarse{\orb X}, \coarse{\stack Y}$), or by the corresponding roman letter ($X$, $Y$) if no confusion is likely.
A \emph{curve} is a proper, reduced, connected, one-dimensional scheme/orbifold/stack, of finite type over $\C$.
The projectivization of a vector bundle is the space of its one dimensional quotients.
Given an object $X$ over $S$ and a morphism $T \to S$, the notation $X_T$ denotes the base-change $X \times_S T$.

\subsubsection*{Acknowledgments}
Thanks to Anand Patel and Ravi Vakil for the stimulating conversations that spawned this project.
Thanks to Alessandro Chiodo and Marco Pacini for sharing their expertise on orbifold curves and theta characteristics.
Thanks to Johan de Jong and Brendan Hassett for their help and encouragement.
Thanks to Changho Han and Aaron Landesman for correcting some errors in an earlier draft.

\section{Moduli of branched covers of a stacky curve}\label{sec:technical}
In this section, we construct the admissible cover compactification of covers of a stacky curve in three steps.
First, we construct the space of branched covers of a family of orbifold curves with a given branch divisor (\autoref{sec:orbifold_covers}).
Second, we take a fixed orbifold curve and construct the space of branch divisors on it and its degenerations (\autoref{sec:fulton_macpherson}).
Third, we combine these results and accommodate generic stabilizers to arrive at the main construction (\autoref{sec:combine}).

\subsection{Covers of a family of orbifold curves with a given branch divisor}\label{sec:orbifold_covers}
Let $S$ be a scheme of finite type over $\C$ and
$\pi \from {\orb X} \to S$ a (balanced) twisted curve as in \cite{abr.vis:02}.
For the convenience of the reader, we recall the definition.
\begin{definition}[Twisted curve]
  A balanced twisted curve is a Deligne--Mumford stack $\orb X$, isomorphic to its coarse space $X$ except at finitely many points.
  The stack structure at these finitely many points is of the following form:
\begin{asparaenum}[]
\item At a node:
  \[ [\spec \left(\C[x,y]/xy\right) / \mu_r ] \text{ where  $\zeta \in \mu_r$ acts by $\zeta \from (x,y) \mapsto (\zeta x, \zeta^{-1}y)$}\]
\item At a smooth point:
  \[ [\spec \C[x]/ \mu_r] \text{ where $\zeta \in \mu_r$ acts by $x \mapsto \zeta x$}.\]
\end{asparaenum}
\end{definition}
A family of twisted curves over $S$ is defined as expected (see \cite[\S~2.1]{abr.cor.vis:03}).
Since all our twisted curves will be balanced, we drop this adjective from now on.
We call the integer $r$ the \emph{order} of the corresponding orbifold point.

\begin{definition}[$\BrCov_d({\orb X}/S, \Sigma)$]
  Let $\Sigma \subset {\orb X}$ be a divisor that is \'etale over $S$ and lies in the smooth and representable locus of $\pi$.
  Define $\BrCov_d({\orb X}/S, \Sigma)$ as the category fibered in groupoids over $\Schemes_S$ whose objects over $T \to S$ are 
  \[(p \from \orb P \to T, \phi \from \orb P \to {\orb X}_T),\]
  where $p$ is a twisted curve over $T$ and $\phi$ is representable, flat, and finite with branch divisor $\br\phi = \Sigma_T$.
\end{definition}

Consider a point of $\BrCov_d({\orb X}/S, \Sigma)$, say $\phi \from \orb P \to {\orb X}_t$ over a point $t$ of $S$.
Then ${\orb P}$ is also a twisted curve with nodes over the nodes of ${\orb X}_t$.
Since $\phi$ is representable, the orbifold points of ${\orb P}$ are only over the orbifold points of ${\orb X}_t$.

We can associate some numerical invariants to $\phi$ which will remain constant in families.
First, we have global invariants such as the number of connected components of ${\orb P}$, their arithmetic genera, and the degree of $\phi$ on them.
Second, for every smooth orbifold point $x \in {\orb X}_t$ we have the local invariant of the cover $\phi \from \orb P \to {\orb X}_t$ given by its monodromy around $x$.
The monodromy is given by the action of the cyclic group $\Aut_x {\orb X}$ on the $d$ element set $\phi^{-1}(x)$.
This data is equivalent to the data of the ramification indices over $x$ of the map $\coarse{\phi} \from \coarse {\orb P} \to \coarse{\orb X_t}$ between the coarse spaces.
Notice that the monodromy data at $x$ also determines the number and the orders of the orbifold points of ${\orb P}$ over $x$.
The moduli problem $\BrCov_d({\orb X}/S, \Sigma)$ is thus a disjoint union of the moduli problems with fixed numerical invariants.
Note, however, that having fixed the degree $d$, the curve ${\orb X}/S$, and the divisor $\Sigma$, the set of possible numerical invariants is finite.

\begin{proposition}\label{prop:brcov_sigma}
  $\BrCov_d({\orb X}/S, \Sigma) \to S$ is a separated \'etale Deligne--Mumford stack of finite type.
  If the orders of all the orbinodes of all the fibers of ${\orb X} \to S$ are divisible by the orders of all the permutations in the symmetric group $\sym_d$, then $\BrCov_d({\orb X}/S, \Sigma) \to S$ is also proper.
\end{proposition}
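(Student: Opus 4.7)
My plan is to verify the three parts of the statement in turn. At the outset, observe that the numerical invariants of a cover (connected components of $\orb P$ with their arithmetic genera, the degree on each component, and the monodromy action of $\Aut_x \orb X$ at each orbifold point $x$) take only finitely many values once ${\orb X}/S$, $\Sigma$, and $d$ are fixed. Hence $\BrCov_d({\orb X}/S, \Sigma)$ is a finite disjoint union of open-and-closed substacks of fixed discrete type, and I may fix these data throughout.

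For étaleness I would use the infinitesimal lifting criterion. Given a square-zero $S$-thickening $T \hookrightarrow T'$ and a cover $\phi_T \from \orb P_T \to \orb X_T$ with branch $\Sigma_T$, the task is to construct a unique lift $\phi_{T'}$. Since $\Sigma$ is étale over $S$, the pair $(\orb X, \Sigma)$ is locally trivial along $\Sigma$ over $S$. On the complement of $\Sigma$, the map $\phi_T$ is étale and lifts uniquely by formal smoothness of étale morphisms; in a formal neighborhood of $\Sigma_T$, $\phi_T$ is a disjoint union of cyclic branched covers $y^{e_i} = x$ with exponents forced by the fixed monodromy data, and each factor lifts uniquely. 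Gluing produces $\phi_{T'}$. Étaleness then gives separatedness, finite automorphism groups (injecting into $\sym_d$) give the Deligne--Mumford property, and finite type follows from the Riemann existence theorem for orbifolds: on any geometric fiber, covers of the prescribed type correspond to a finite set of conjugacy classes of representations $\pi_1^{\mathrm{orb}}(\orb X_t \setminus \Sigma_t) \to \sym_d$.

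The main obstacle is properness under the divisibility hypothesis, which I would verify by the valuative criterion. Let $R$ be a DVR with fraction field $K$, and let $\phi_K \from \orb P_K \to \orb X_K$ be a cover with branch $\Sigma_K$. Take $\orb P$ to be the normalization of $\orb X_R$ in the function field of $\orb P_K$, and let $\phi \from \orb P \to \orb X_R$ be the induced map. One must show that $\orb P$ is a twisted curve, $\phi$ is representable, flat and finite, and the branch of $\phi$ is $\Sigma_R$. This is local on $\orb X_R$. At smooth representable points away from $\Sigma_R$ and at points of $\Sigma_R$, the local analysis from the étaleness step applies. The essential new case is at an orbinode of order $r$ in the central fiber: locally $\orb X_R \cong [\spec R[x,y]/(xy - u\pi^{rn})/\mu_r]$, and pulling $\phi_K$ back along the étale $\mu_r$-cover $\spec R[x,y]/(xy - u\pi^{rn}) \to \orb X_R$ produces a cover whose monodromy $\sigma \in \sym_d$ around the vanishing cycle has some order $k$. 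The divisibility hypothesis forces $k \mid r$, so the normalization descends $\mu_r$-equivariantly and yields a representable twisted cover with orbinode stabilizer $\mu_k \subseteq \mu_r$. Uniqueness of the extension is automatic from the canonicity of the normalization.
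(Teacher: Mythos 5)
The heart of the proposition is the extension of the cover across an orbinode of the special fiber, and this is where your argument has a genuine error. A representable finite flat cover of $U=[\spec R[x,y]/(xy-u\pi^m)/\mu_r]$ by a twisted curve pulls back along the chart $\overline U=\spec R[x,y]/(xy-u\pi^m)$ (a $\mu_r$-torsor over $U$) to a finite flat cover of that \emph{scheme}; since the source stabilizers must inject into the target ones, no stack structure survives on the pullback, so the pullback must be honestly \'etale at the node. In particular the monodromy around the vanishing cycle of the chart must be \emph{trivial}, not merely of order $k$ dividing $r$: if it has order $k>1$, the normalization over $\overline U$ is the map $(x,y)\mapsto(x^k,y^k)$ on each branch pair, which is finite but not flat at the node (the special fiber has length $2k-1$, not $k$), and no representable flat extension exists. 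The divisibility hypothesis enters precisely to force $k=1$: the chart's vanishing cycle is the $r$-th power of the generator of the full local orbifold fundamental group $\mu_{mr}$ of $U\setminus\{0\}$, so its monodromy is $\sigma^r$ for some $\sigma\in\sym_d$, and the hypothesis makes $\sigma^r=1$. This is exactly the content of the paper's \autoref{prop:deep_orbinode}. As you have stated it (``$k\mid r$ suffices for descent''), the criterion would assert extendability in situations where it fails, and it obscures where the hypothesis is actually used.

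There are three further gaps. \'Etale does not imply separated (the line with doubled origin is \'etale over the line), so separatedness needs its own argument; the paper gets it for free by realizing $\BrCov_d(\orb X/S,\Sigma)$ inside the Abramovich--Vistoli stack of twisted stable maps via open and open-and-closed conditions, which also yields Deligne--Mumford and finite type without invoking Riemann existence. Your valuative criterion never takes a finite base change of the DVR, but one is needed: before base change the cover is typically ramified along components of $\orb X_0$ (Abhyankar's lemma is what removes this), so your normalization would have branch divisor strictly larger than $\Sigma_R$. Finally, when $\orb X_K$ has nodes, $\orb P_K$ is nodal, hence not normal, and the normalization of $\orb X_R$ in its total ring of fractions separates the branches and does not restrict to $\phi_K$ over $K$; the persistent nodes must be handled separately (normalize, extend, and re-glue --- the paper does this via local simple connectedness of $\orb X$ at such points). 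These are all repairable within your framework, but as written the proof does not go through.
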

\begin{proof}
  Fix non-negative integers $g$ and $n$.
  Consider the category fibered in groupoids over $\Schemes_S$ whose objects over $T \to S$ are $(p \from {\orb P} \to T, \phi \from {\orb P} \to {\orb X})$ where ${\orb P} \to T$ is a twisted curve of genus $g$ with $n$ smooth orbifold points and $\phi$ is a twisted stable map with $\phi_*[{\orb P}_t] = d[{\orb X}_t]$.
  This is simply the stack of twisted stable maps to ${\orb X}$ of Abramovich--Vistoli.
  By \cite[Theorem~1.4.1]{abr.vis:02}, this is a proper Deligne--Mumford stack of finite type over $S$.
  The conditions that $\phi \from {\orb P} \to {\orb X}_T$ be finite and unramified away from $\Sigma$ are open conditions.
  For $\phi \from {\orb P} \to {\orb X}_T$ finite and unramified away from $\Sigma$, the condition that $\br\phi = \Sigma_T$ is open and closed.
  Thus, for fixed $g$ and $n$ the stack $\BrCov_d^{g,n}({\orb X}/S, \Sigma)$ is a separated Deligne--Mumford stack of finite type over $S$.
  But there are only finitely many choices for $g$ and $n$, so $\BrCov_d({\orb X}/S, \Sigma)$ is a separated Deligne--Mumford stack of finite type over $S$.

  To see that $\BrCov_d({\orb X}/S, \Sigma) \to S$ is \'etale, consider a point $t \to S$ and a point of $\BrCov_d({\orb X}/S, \Sigma)$ over it, say $\phi \from {\orb P} \to {\orb X}_t$.
  Since $\phi$ is a finite flat morphism of curves with a reduced branch divisor $\Sigma_t$ lying in the smooth and representable locus of ${\orb X}_t$, the map on deformation spaces
  \[\Def_\phi \to \Def({\orb X}_t, \Sigma_t)\]
  is an isomorphism.
  Indeed, we have an isomorphism of cotangent complexes $\phi_* L_{\phi} \to L_{\Sigma_t \to \orb X_t}$ (this complex is equivalent to a skyscraper sheaf supported on $\Sigma_t$).
  We conclude that $\BrCov_d({\orb X}/S, \Sigma) \to S$ is \'etale.

  Finally, assume that the orders of all the orbinodes of the fibers of ${\orb X} \to S$ are sufficiently divisible as required.
  Let us check the valuative criterion for properness.
  For this, we take $S$ to be a DVR $\Delta$ with special point $0$ and generic point $\eta$.
  Let $\phi \from {\orb P}_\eta \to {\orb X}_\eta$ be a finite cover of degree $d$ with branch divisor $\Sigma_{\eta}$.
  We want to show that $\phi$ extends to a finite cover $\phi \from \orb P \to \orb X$ with branch divisor $\Sigma$, possibly after a finite base change on $\Delta$.
  The proof follows \cite[Section 6]{abr.vis:02}.

  Let $x$ be the generic point of a component of ${\orb X}_0$.
  By Abhyankar's lemma, $\phi$ extends to a finite \'etale cover over $x$, possibly after a finite base change on $\Delta$.
  We then have an extension of $\phi$ on all of ${\orb X}$ except over finitely many points of ${\orb X}_0$.

  Let $x \in {\orb X}_0$ be a smooth point.
  Recall that every finite flat cover of a punctured smooth surface extends to a finite flat cover of the surface.
  Indeed, the data of a finite flat cover consists of the data of a vector bundle along with the data of an algebra structure on the vector bundle.
  A vector bundle on a punctured smooth surface extends to a vector bundle on the surface by  \cite{hor:64}.
  The maps defining the algebra structure extend by Hartog's theorem.
  Therefore, we get an extension of $\phi$ over $x$.

  Let $x \in {\orb X}_0$ be a limit of a node in the generic fiber.
  Then $\orb X$ is locally simply connected at $x$.
  (That is, $V \setminus \{x\}$ is simply connected for a sufficiently small \'etale chart $V \to \orb X$ around $x$.)
  In this case, $\phi$ trivially extends to an \'etale cover locally over $x$.

  Let $x \in {\orb X}_0$ be a node that is not a limit of a node in the generic fiber.
  Then ${\orb X}$ has the form
  \[U = [\spec \C[x,y,t]/(xy-t^n)/\mu_r]\]
  near $x$ where $r$ is sufficiently divisible.
  In this case, $\phi$ extends to an \'etale cover over $U$ by \autoref{prop:deep_orbinode}, where we interpret an \'etale cover of degree $d$ as a map to $\stack Y = B\sym_d$.

  We thus have the required extension $\phi \from \orb P \to \orb X$.
  The equality of divisors $\br \phi = \Sigma$ holds outside a codimension 2 locus on $\orb X$, and hence on all of $\orb X$.
  The proof of the valuative criterion is then complete.
\end{proof}
\begin{lemma}\label{prop:deep_orbinode}
  Let $\stack Y$ be a Deligne--Mumford stack and let $U$ be the orbinode
  \[ U = [\spec \C[x,y,t]/(xy-t^n) / \mu_r].\]
  Suppose $\phi \from U \dashrightarrow \stack Y$ is defined away from $0$ and the map $\coarse \phi$ on the coarse spaces extends to all of $\coarse U$.
  Suppose for every $\sigma \in \Aut_{\coarse\phi(0)} \stack Y$ we have $\sigma^r = 1$.
  Then $\phi$ extends to a morphism $\phi \from U \to \stack Y$.
\end{lemma}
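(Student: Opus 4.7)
The plan is to reduce to an \'etale-local problem on $\stack Y$ and then make the extension explicit using a smooth uniformization of the surface $V := \spec \C[x,y,t]/(xy-t^n)$, so that $U = [V/\mu_r]$. First, choose a representable \'etale chart $[W/G] \to \stack Y$ around $p := \coarse\phi(0)$, where $G = \Aut_p \stack Y$ and $w_0 \in W$ is a $G$-fixed point lying over $p$. Since $\coarse\phi$ extends across $0$, we may assume $\coarse\phi$ sends a neighborhood of $0 \in \coarse U$ into the image of $W$; then $\phi \from U \setminus 0 \to \stack Y$ factors through $[W/G]$ near the origin, and is equivalent to the data of a $G$-torsor $P$ on $U \setminus 0$ together with a $G$-equivariant morphism $P \to W$. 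The problem is now to extend this pair across $0 \in U$.

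Next I would uniformize. The map $\A^2 \to V$, $(a,b) \mapsto (a^n, b^n, ab)$, identifies $V$ with $\A^2/\mu_n$, where $\zeta \in \mu_n$ acts by $(a,b) \mapsto (\zeta a, \zeta^{-1} b)$. The $\mu_r$-action on $V$ then lifts to an action of the cyclic group $\mu_{nr}$ on $\A^2$ by $\xi \cdot (a,b) = (\xi a, \xi^{-1} b)$, fitting into an exact sequence
\[ 1 \to \mu_n \to \mu_{nr} \xrightarrow{\xi \mapsto \xi^n} \mu_r \to 1. \]
Pulling $P$ back along the composite $\A^2 \setminus 0 \to V \setminus 0 \to U \setminus 0$ yields a $\mu_{nr}$-equivariant $G$-torsor $\tilde P$ on $\A^2 \setminus 0$. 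Since $\A^2 \setminus 0$ is simply connected, $\tilde P$ is trivial as a $G$-torsor, so its $\mu_{nr}$-equivariant structure is encoded in a single homomorphism $\chi \from \mu_{nr} \to G$, and the $G$-equivariant map to $W$ is encoded in a $\chi$-equivariant morphism $f \from \A^2 \setminus 0 \to W$.

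The key use of the hypothesis is now immediate: since $g^r = 1$ for all $g \in G$, we have $\chi(\xi^r) = \chi(\xi)^r = 1$ for every $\xi \in \mu_{nr}$, so $\chi$ vanishes on the subgroup of $r$-th powers in the cyclic group $\mu_{nr}$, which is precisely $\mu_n$. Hence $\chi$ factors as $\mu_{nr} \twoheadrightarrow \mu_r \xrightarrow{\bar\chi} G$. With this, extending the data across $0 \in \A^2$ is straightforward: the trivial $G$-torsor on $\A^2$ with $\mu_{nr}$-action through $\chi$ extends $\tilde P$, and $f$ extends to $\bar f \from \A^2 \to W$ by Hartog's theorem (the origin has codimension $2$ in smooth $\A^2$ and $W$ is a scheme), with $\bar f(0) = w_0$ forced by equivariance and the coarse-space hypothesis. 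Because $\chi|_{\mu_n}$ is trivial, $\mu_n$ acts on this extension only through its action on $\A^2$, and $\bar f$ is $\mu_n$-invariant; taking $\mu_n$-quotients descends the whole extension to a $\mu_r$-equivariant (via $\bar\chi$) $G$-torsor on $V$ with a compatible equivariant map to $W$, i.e., to a morphism $U = [V/\mu_r] \to [W/G]$, which composes with $[W/G] \to \stack Y$ to give the desired extension.

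The most delicate point I foresee is checking that the descended extension actually restricts to the given $\phi$ on $U \setminus 0$. Since $\A^2 \setminus 0 \to V \setminus 0$ is a genuine $\mu_n$-torsor and $V \setminus 0 \to U \setminus 0$ is a genuine $\mu_r$-torsor, $\mu_{nr}$-descent is an equivalence over $U \setminus 0$, so uniqueness of pullbacks forces agreement; but carrying the $2$-isomorphisms correctly through the two-stage descent is where the bookkeeping lives.
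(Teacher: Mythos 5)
Your proposal is essentially the paper's own proof: the paper also passes to an \'etale presentation $[Y/G]$ of $\stack Y$, uniformizes the orbinode by $\A^2 = \spec\C[u,v,t]/(uv-t)$ with its $\mu_{nr}$-action (so that $V$ is the geometric quotient by $\mu_n$ and $U$ the further stack quotient by $\mu_r$), uses simple connectivity to produce the homomorphism $\mu_{nr}\to G$ and an equivariant lift, and then uses the hypothesis $\sigma^r=1$ to kill $\mu_n$ and descend. Your version merely spells out the torsor bookkeeping and the Hartogs extension a bit more explicitly; the argument is the same.
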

\begin{proof}
  Let $G = \Aut_{\coarse \mu(0)} \stack Y$.
  We have an \'etale local presentation of $Y$ of the form $[Y/G]$ where $Y$ is a scheme.
  Since it suffices to prove the statement locally around $0$ in the \'etale topology, we may take $\stack Y = [Y/G]$.
  Consider the following tower:
  \[
  \begin{tikzpicture}
    \node (uni) {$\spec \C[u,v,t]/(uv-t) = \widetilde U$};
    \node[below of=uni] (inter) {$\spec \C[x,y,t]/(xy-t^n) = \overline U$};
    \node[below of=inter] (U) {$[\spec \C[x,y,t]/(xy-t^n) / \mu_r] = U$};
    \draw (inter)+(4,0) node (Y) {$Y$};
    \node (YG) [below of=Y] {$[Y/G]$};
    \draw[->] (uni) edge (inter) (inter) edge (U);
    \draw[->] (Y) edge (YG) (inter) edge (U);
    \draw[->, dashed] (U) edge (YG) (inter) edge (U);
  \end{tikzpicture}.
  \]
  We have an action of $\mu_{nr}$ on $\widetilde U$ where an element $\zeta \in \mu_{nr}$ acts by $\zeta \cdot (u,v,t) \mapsto (\zeta u, \zeta^{-1} v, t)$.
  The map $\widetilde U \to \overline U$ is the geometric quotient by $\mu_n = \langle \zeta^r \rangle \subset \mu_{nr}$
  and the map $\overline U \to U$ is the stack quotient by $\mu_r = \mu_{nr}/ \mu_n$.
  Since $\widetilde U$ is simply connected, we get a map $\mu_{nr} \to G$ and a lift $\widetilde U \to Y$ of $\phi$ which is equivariant with respect to the $\mu_{nr}$ action on $\widetilde U$ and the $G$ action on $Y$.
  Since $\sigma^r = 1$ for all $\sigma \in G$, the map $\widetilde U \to Y$ is equivariant with respect to the $\mu_n$ action on $\widetilde U$ and the trivial action on $Y$.
  Hence it gives a map $\overline U \to Y$ and by composition a map $\overline U \to [Y/G]$.
  Since $\overline U \to U$ is \'etale, we have extended $\phi$ to a map at $0$ \'etale locally on $U$. 
\end{proof}

\subsection{The Fulton--MacPherson configuration space}\label{sec:fulton_macpherson}
The goal of this section is to construct a compactified configuration space of $b$ distinct points on orbifold smooth curves in the style of Fulton and MacPherson. 
We first recall (a slight generalization of) the notion of a \emph{$b$-pointed degeneration} from \cite{ful.mac:94}.
\begin{definition}[Pointed degeneration]
Let $X$ be a smooth schematic curve and let $x_1, \dots, x_n$ be distinct points of $X$.
A \emph{$b$-pointed degeneration of $(X,\{x_1, \dots, x_n\})$} is the data of 
\[(\rho \from Z \to X, \{ \sigma_1, \dots, \sigma_b \}, \{\widetilde x_1, \dots, \widetilde x_n\}),\]
where
\begin{itemize}
\item $Z$ is a nodal curve and $\{\sigma_j, \widetilde x_i \}$ are $b+n$ distinct smooth points on $Z$;
\item $\rho$ maps $\widetilde x_i$ to $x_i$
\item $\rho$ is an isomorphism on one component of $Z$, called the main component and also denoted by $X$.
  The rest of the curve, namely $\overline{Z \setminus X}$, is a disjoint union of trees of smooth rational curves, each tree meeting $X$ at one point.
\end{itemize}
We say that the degeneration is \emph{stable} if each component of $Z$ contracted by $\rho$ has at least three special points (nodes or marked points).
\end{definition}

We now define a similar gadget for an orbifold curve.
\begin{definition}[Pointed degeneration of orbifold curves]
  Let ${\orb X}$ be a smooth orbifold curve with $n$ orbifold points $x_1, \dots, x_n$.
  A \emph{$b$-pointed degeneration} of ${\orb X}$ is the data of 
  \[ (\rho \from {\orb Z} \to {\orb X}, \{ \sigma_1, \dots, \sigma_b \}),\]
  where
  \begin{itemize}
  \item ${\orb Z}$ is a twisted nodal curve, schematic away from the nodes and $n$ smooth points, say $\widetilde x_1, \dots, \widetilde x_n$;
  \item $\sigma_1, \dots, \sigma_b$ are $b$ distinct points in the smooth and schematic locus of ${\orb Z}$;
  \item $\rho$ maps $\widetilde x_i$ to $x_i$ and induces an isomorphism $\rho \from \Aut_{\widetilde x_i} {\orb Z} \to \Aut_{x_i} {\orb X}$;
  \item the data on the underlying coarse spaces
    \[(\coarse \rho \from \coarse {\orb Z} \to \coarse {\orb X}, \{\sigma_1, \dots, \sigma_b\}, \{\widetilde x_1, \dots, \widetilde x_n\})\]
    is a $b$-pointed degeneration of $(\coarse {\orb X}, \{x_1, \dots, x_n\})$.
  \end{itemize}
  We say that the degeneration is \emph{stable} if the degeneration of the underlying coarse spaces is stable.
\end{definition}

Let ${\orb X}$ be an orbifold curve with $n$ orbifold points $x_1, \dots, x_n$.
Let $U \subset {\orb X}^b$ be the complement of all the diagonals and orbifold points.
Let $\pi \from {\orb X} \times U \to U$ be the second projection and $\sigma_j \from U \to {\orb X} \times U$ the section of $\pi$ corresponding to the $j$th factor, namely
\[ \sigma_j (u_1, \dots, u_b) = (u_j, u_1, \dots, u_b).\]
Let $\rho \from {\orb X} \times U \to {\orb X}$ be the first projection.
\begin{proposition}\label{prop:fm}
  There exists a smooth Deligne--Mumford stack ${\orb X}[b]$ along with a family of twisted curves $\pi \from {\orb Z} \to {\orb X}[b]$ such that the following hold:
  \begin{enumerate}
  \item\label{compactification}
    ${\orb X}[b]$ contains $U$ as a dense open substack and $\pi \from {\orb Z} \to {\orb X}[b]$ restricts over $U$ to ${\orb X} \times U \to U$.
  \item \label{snc}
    ${\orb X}[b] \setminus U$ is a divisor with simple normal crossings and the total space $\orb Z$ is smooth.
  \item\label{extension}
    The sections $\sigma_j \from U \to {\orb X} \times U$ and the map $\rho \from {\orb X} \times U \to {\orb X}$ extend to sections $\sigma_j \from {\orb X}[b] \to \orb Z$ and a map $\rho \from {\orb Z} \to {\orb X}$. 
  \item \label{fibers_are_expansions}
    For every point $t$ in ${\orb X[b]}$, the datum $(\rho \from {\orb Z}_t \to {\orb X}, \{\sigma_j\})$ is a $b$-pointed stable degeneration of ${\orb X}$.
  \item \label{deep nodes}
    We may arrange ${\orb X}[b]$ and $\pi \from {\orb Z} \to {\orb X}[b]$ so that the orders of the orbinodes of the fibers of $\pi$ are sufficiently divisible.
  \end{enumerate}
\end{proposition}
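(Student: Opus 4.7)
I would construct $\orb X[b]$ and its universal family in two stages: first a Fulton--MacPherson style iterated blowup in the category of schemes, producing a smooth compactification $\widetilde W$ of the configuration space with an SNC boundary and an accompanying universal family $\widetilde Z$; second, installation of the orbifold structure via root stacks, both at the preimages of the marked points $x_i$ and along the boundary node loci, with the root orders chosen to realize the required stabilizers at $\widetilde x_i$ and to make the orbinode indices sufficiently divisible.

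For the coarse stage, let $X = \coarse{\orb X}$ and consider the arrangement inside $X^b$ consisting of the small diagonals $D_S = \{p_{j_1} = \cdots = p_{j_k}\}$ for $S = \{j_1,\ldots,j_k\} \subset \{1,\ldots,b\}$ with $|S| \ge 2$, together with the basepoint strata $D_S^{(i)} = \{p_j = x_i \text{ for all } j \in S\}$ for nonempty $S$ and each $1 \le i \le n$. Iteratively blowing up the proper transforms of these subvarieties in order of increasing dimension, following \cite{ful.mac:94}, produces $\widetilde W$, in which $U$ sits as the complement of an SNC divisor; each boundary component corresponds to a nonempty $S$ (with or without a basepoint) and parametrizes configurations in which the $\sigma_j$ for $j \in S$ have bubbled off onto a single rational tree, meeting the main component either at a free point or at $x_i$. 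Applying the same recipe to $X^{b+1}$ with coordinates $(p_0,p_1,\ldots,p_b)$ produces the universal family $\widetilde Z$, with $\pi$ induced by forgetting $p_0$, the contraction $\rho$ induced by projection onto $p_0$, and the sections $\sigma_j$ obtained as the proper transforms of the diagonals $\{p_0 = p_j\}$. A local computation along each boundary stratum confirms that each fiber of $\pi$ is a $b$-pointed stable degeneration of $(X,\{x_1,\ldots,x_n\})$ in the sense of the preceding definition.

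For the orbifold stage, let $r_i$ denote the order of the stabilizer at $x_i$ in $\orb X$. I would take the $r_i$-th root stack of $\widetilde Z$ along the Cartier divisor $\rho^{-1}(x_i)$, installing the required stabilizer of order $r_i$ at each fiberwise preimage of $x_i$; taking the matching root stacks on $\widetilde W$ along the corresponding boundary divisors keeps $\pi$ flat and the construction representable. To obtain sufficiently divisible orbinode indices, I would further take $N$-th root stacks along each SNC boundary divisor on $\widetilde W$, matched by $N$-th root stacks on $\widetilde Z$ along the corresponding node loci, for any chosen large multiple $N$; by the standard balancing construction, this produces twisted nodes of order $N$ in each fiber. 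Since root stacks commute with flat base change and preserve smoothness along smooth divisors, the result is a smooth Deligne--Mumford stack $\orb X[b]$ and a smooth family $\pi \from \orb Z \to \orb X[b]$ of twisted curves satisfying properties \ref{compactification}--\ref{deep nodes}.

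The principal obstacle is the smoothness and transversality bookkeeping of the iterated blowup: one must verify that at each step the center being blown up is smooth and meets the existing exceptional divisors transversely, so that the final total space is smooth with SNC boundary. This is the technical core of \cite{ful.mac:94}, and the enlarged arrangement including the basepoints $x_i$ slots into the same inductive transversality argument without essential change. A secondary subtlety is that the isomorphism $\rho \from \Aut_{\widetilde x_i} \orb Z_t \to \Aut_{x_i} \orb X$ must hold in every fiber of $\pi$; this is ensured by performing the $r_i$-th root stack globally along $\rho^{-1}(x_i) \subset \widetilde Z$ rather than one fiber at a time, so that the stabilizer group is uniformly $\mu_{r_i}$ across all fibers.
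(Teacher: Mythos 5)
Your overall strategy coincides with the paper's: build the coarse Fulton--MacPherson space $X[b]$ and its universal nodal family by iterated blowups, then impose the orbifold structure by rooting along the marked-point loci and along the boundary so as to make the orbinode indices divisible. (The paper packages the second stage through Olsson's log twisted curves, but for an SNC boundary this amounts to the same iterated root constructions, so the routes are essentially the same.) The gap is in how you install the stabilizers over the $x_i$ and, consequently, in how $\rho$ lifts to the stacky target. The divisor $\rho^{-1}(x_i) \subset \widetilde Z$ is not just the section $\widetilde x_i(\widetilde W)$: it also contains every exceptional component of $\widetilde Z \to X \times \widetilde W$ lying over a boundary stratum where a cluster of the $\sigma_j$ collides at $x_i$, and those components restrict to entire bubble components of the degenerate fibers. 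Taking the $r_i$-th root stack along all of $\rho^{-1}(x_i)$ therefore equips whole components of the fibers with generic $\mu_{r_i}$-stabilizers, so the fibers are no longer twisted curves (a $b$-pointed degeneration must be schematic away from the nodes and the $n$ points $\widetilde x_i$).

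The correct move is to root only along the section $\widetilde x_i(\widetilde W)$ --- but then the lift of $\rho \from \orb Z \to X$ to $\orb X$ (which is the root stack of $X$ along the $x_i$) is no longer automatic: it requires the total transform $\coarse{\rho}^{*}(x_i)$ to be $r_i$ times a Cartier divisor on $\orb Z$, and the exceptional components of $\rho^{-1}(x_i)$ occur with multiplicities cut out by powers of the node coordinates. This is precisely where your two ostensibly independent rooting steps must interact: the paper chooses the node-root order $d$ (your $N$) divisible by every $r_i$, so that after the node modification each exceptional component of $\rho^{*}(x_i)$ acquires multiplicity divisible by $d$, hence by $r_i$, and the lift exists. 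Your proposal never addresses lifting $\rho$ to $\orb X$ rather than to $X$ except through the flawed full-preimage root. A smaller point: the node loci in $\widetilde Z$ have codimension $2$, so an ``$N$-th root stack on $\widetilde Z$ along the node loci'' is not a defined operation; what is meant is pulling back the family to the rooted base (producing $uv = t^N$ singularities in the total space) and then applying the balanced twisted-node construction --- this is what Olsson's theorem supplies in the paper, and it, not a divisorial root of $\widetilde Z$, is what restores smoothness of the total space.
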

For the proof, we need a slight variation of the Fulton--MacPherson space of $b$ points on $X$.
Let $X[b;x_1, \dots ,x_n]$ be the space of $b$ points on $X$ where the points are required to remain distinct and also distinct from the $x_i$.
To ease notation, we abbreviate $X[b; x_1, \dots, x_n]$ by $X[b]$.
It is a smooth projective variety containing $U$ as a dense open subset with a normal crossings complement.
It carries a universal family of nodal curves $\pi \from Z \to X[b]$ with smooth total space $Z$ along with $b+n$ sections $\sigma_j \from X[b] \to Z$ and $\widetilde x_i \from X[b] \to Z$, and a map $\rho \from Z \to X$.
The universal family extends the constant family $X \times U \to U$; the sections $\sigma_j$ extend the tautological sections; the sections $\widetilde x_i$ extend the constant sections $x_i$; and the map $\rho$ extends the projection $X \times U \to X$.
The fibers of $Z \to X[b]$ along with the points $\sigma_j$, $\widetilde x_i$, and the map $\rho$ form a stable $b$-pointed degeneration of $(X, x_1, \dots, x_n)$.

We can construct $X[b]$ following the method of \cite{ful.mac:94}, which we recall briefly.
We start with $X^b$ and the constant family $X \times X^b \to X$.
We then successively blow up the proper transforms of the strata where the sections $\sigma_j$ coincide among themselves or coincide with the points $x_i$ to arrive at $X[b]$ and the family $Z \to X[b]$.
We summarize the features of $X[b]$ in the following diagram:
\[
  \begin{tikzpicture}
    \draw node (XU) {$X \times U$} (XU)+(2,0) node (Z) {$Z$} (Z)+(2,0) node (X) {$X$};
    \draw node[below of=XU] (U) {$U$} (U)+(2,0) node (Xb) {$X[b]$};
    \draw[right hook->]
    (XU) edge (Z) 
    (U) edge (Xb);
    \draw[->] (XU) edge (U) (Z) edge (Xb) (Z) edge node[above]{\small $\rho$} (X);
    \draw[->, bend right=30]
    (U) edge node[right] {\small $\sigma_j, x_i$} (XU) 
    (Xb) edge node[right] {\small $\sigma_j, \widetilde x_i$} (Z);
  \end{tikzpicture}.
\]

\begin{proof}[Proof of \autoref{prop:fm}]
  Let $X$ be the coarse space of ${\orb X}$.
  Let $X[b]$ be the Fulton--MacPherson space of $b$ distinct points on $X$ that remain distinct from the $x_i$, as described above.
  We now use the results of Olsson \cite{ols:07} to modify $X[b]$ and $Z \to X[b]$ in a stacky way to obtain the claimed ${\orb X}[b]$ and ${\orb Z} \to {\orb X}$.
  Our argument follows the proof of \cite[Theorem~1.9]{ols:07}.
  Fix a positive integer $d$ that is divisible by $a_i = |\Aut_{x_i}{\orb X}|$ for all $i$.
  The simple normal crossings divisor $X[b] \setminus U$ gives a canonical log structure $\mathcal M$ on $X[b]$.
  This log structure agrees with the log structure that $X[b]$ gets from the family of nodal curves $Z \to X[b]$ as explained in \cite[\S~3]{ols:07}. 
  Denote by $r$ the number of irreducible components of $X[b] \setminus U$.
  Let $\alpha$ be the vector $(d, \dots, d)$ of length $r$.
  Let ${\orb X}[b]$ be the stack $\stack F(\alpha)$ constructed in \cite[Lemma~5.3]{ols:07}.
  The defining property of $\stack F(\alpha)$ is the following: a map $T \to \stack F(\alpha)$ corresponds to a map $T \to X[b]$ along with an extension of log structures $\mathcal M_T \to \mathcal M'_T$ which locally has the form $\N^r \xrightarrow{\times d} \N^r$.
  Thus, ${\orb X}[b]$ maps to $X[b]$ and comes equipped with a tautological extension $\mathcal M \to \mathcal M'$, where we have used the same symbol $\mathcal M$ to denote the pullback to ${\orb X}[b]$ of the log structure $\mathcal M$ on $X[b]$.
  By \cite[Theorem~1.8]{ols:07}, the data $(Z \times_{X[b]} {\orb X}[b], \{x_i, a_i\}, \mathcal M \to \mathcal M')$ defines a twisted curve ${\orb Z} \to {\orb X}[b]$ with a map ${\orb Z} \to Z$ which is a purely stacky modification (an isomorphism on coarse spaces).

  Before we proceed, let us describe the modification ${\orb X}[b] \to X[b]$ and ${\orb Z} \to Z$ explicitly in local coordinates.
  Let $0 \in X[b]$ be a point such that $Z_0$ is an $l$-nodal curve.
  \'Etale locally around $0$, the pair $(X[b], X[b]\setminus U)$ is isomorphic to
  \begin{equation}\label{Xb etale}
    (\spec \C[x_1, \dots, x_b], x_1 \cdots x_l).
  \end{equation}
  \'Etale locally around a node of $Z_0$, the map $Z_0 \to X[b]$ is isomorphic to
  \begin{equation}\label{Z etale}
    \spec \C[x_1, \dots, x_b, y, z]/(yz - x_1) \to \spec \C[x_1, \dots, x_b].
  \end{equation}
  In the coordinates of \eqref{Xb etale}, the map ${\orb X}[b] \to X[b]$ is given by
  \begin{equation}\label{oXb etale}
    \begin{split}
      \left[ \spec \C[u_1, \dots, u_l, x_{l+1}, \dots, x_b] \middle/ \mu_d^l \right] &\to \spec \C[x_1, \dots, x_b] \\
      (u_1, \dots, u_l, x_{l+1}, \dots, x_b) &\mapsto (u_1^d, \dots, u_l^d, x_{l+1}, \dots, x_b).
    \end{split}
  \end{equation}
  Here $\mu_d^l$ acts by multiplication on $(u_1, \dots, u_l)$ and trivially on the $x_i$. 
  Having described ${\orb X}[b] \to X[b]$, we turn to ${\orb Z} \to Z$.
  Let 
  \[V = \spec \C[u_1, \dots, u_l, x_{l+1}, \dots, x_b]\]
  be the \'etale local chart of ${\orb X}[b]$ from \eqref{oXb etale}.
  The map ${\orb Z}_V \to Z_V$ is an isomorphism except over the points $\widetilde x_i$ and the nodes of $Z_0$.
  Around the point $\widetilde x_i$ of $Z_0$, the map ${\orb Z}_V \to Z_V$ is given by
  \begin{equation}\label{Z etale marked point}
    \begin{split}
      [\spec \O_V[s] / \mu_{a_i}] &\to \spec \O_V[t], \\
      s &\mapsto s^{a_i},
    \end{split}
  \end{equation}
  where $\zeta \in \mu_{a_i}$ acts by $\zeta \cdot s = \zeta s$.
  Around the node of $Z_0$ from \eqref{Z etale}, the map ${\orb Z}_V \to Z_V$ is given by
  \begin{equation}\label{Z etale node}
    \begin{split}
      \left[\spec \O_V[a,b]/(ab-u_1) \middle/ \mu_d\right] &\to \spec \O_V[y,z]/(yz-u^d_1)\\
      (a,b) &\mapsto (a^d, b^d),
    \end{split}
  \end{equation}
  where $\zeta \in \mu_d$ acts by $\zeta \cdot (a,b) = (\zeta a, \zeta^{-1} b)$.

  We now check that our construction has the claimed properties.
  From \eqref{oXb etale}, it follows that ${\orb X}[b] \to X[b]$ is an isomorphism over $U$.
  Therefore, ${\orb X}[b]$ contains $U$ as a dense open.
  From \eqref{oXb etale}, we also see that the complement is simple normal crossings.
  From \eqref{Z etale marked point}, we see that the map ${\orb Z}_U \to Z_U$ is the root stack of order $a_i$ at $x_i \times U$.
  Therefore, ${\orb Z}_U \to U$ is isomorphic to ${\orb X} \times U \to U$.
  From the local description, we see that $\orb Z$ is smooth.
  The sections $\sigma_j \from {X[b]} \to Z$ give sections $\sigma_j \from {\orb X}[b] \to Z \times_{X[b]}{\orb X}[b]$.
  But ${\orb Z} \to Z \times_{X[b]} {\orb X}[b]$ is an isomorphism around $\sigma_j$.
  So we get sections $\sigma_j \from {\orb X}[b] \to {\orb Z}$.
  To get $\rho \from {\orb Z} \to {\orb X}$, we start with the map $\coarse\rho \from {\orb Z} \to X$ obtained by composing ${\orb Z} \to Z$ with $ \rho \from Z \to X$.
  To lift $\coarse \rho$ to $\rho \from {\orb Z} \to {\orb X}$, we must show that the divisor ${\coarse \rho}^{-1}(x_i) \subset {\orb Z}$ is $a_i$ times a Cartier divisor.
  The divisor $\rho^{-1}(x_i) \subset Z$ consists of multiple components: a main component $\widetilde x_i(X[b])$ and several other components that lie in the exceptional locus of $Z \to X \times X[b]$.
  From \eqref{Z etale marked point}, we see that the multiplicity of the preimage in $\orb Z$ of $\widetilde x_i(X[b])$ is precisely $a_i$.
  In the coordinates of \eqref{Z etale}, the exceptional components are cut out by powers of $y$, $z$, or $x_i$.
  In any case, we see from \eqref{Z etale node} that their preimage in $\orb Z$ is divisible by $d$, which is in turn divisible by $a_i$.
  Therefore, $\coarse \rho \from {\orb Z} \to X$ lifts to $\rho \from {\orb Z} \to {\orb X}$.
  For a point $t$ in ${\orb X}[b]$, the datum $(\rho \from Z_t \to X, \{\sigma_j\}, \{\widetilde x_i\})$ is a $b$-pointed stable degeneration of $(X, \{\widetilde x_i\})$.
  From the description of $\orb Z \to Z$, it follows that $(\rho \from {\orb Z}_t \to {\orb X}, \{\sigma_j\})$ is a $b$-pointed stable degeneration of ${\orb X}$.
  Finally, we see from \eqref{Z etale node} that the order of the orbinodes of the fibers of ${\orb Z} \to {\orb X}[b]$ is $d$, which we can take to be sufficiently divisible.
\end{proof}

\subsection{Moduli of branched covers of a stacky curve}\label{sec:combine}
The goal of this section to combine the results of the previous two sections and accommodate generic stabilizers.

Let $\stack X$ be a smooth stacky curve.
We can express $\stack X$ as an \'etale gerbe $\stack X \to \orb X$, where $\orb X$ is an orbifold curve.
Fix a positive integer $b$ and let ${\orb X}[b]$ be a Fulton--MacPherson space of $b$ distinct points constructed in \autoref{prop:fm}.
Let $\pi \from {\orb Z} \to {\orb X}[b]$, and $\sigma_j \from {\orb X}[b] \to {\orb Z}$, and $\rho \from {\orb Z} \to {\orb X}$ be as in \autoref{prop:fm}.
We think of ${\orb X}[b]$ as the space of $b$-pointed stable degenerations of ${\orb X}$, and the data $(\orb Z, \rho \from {\orb Z} \to {\orb X}, \sigma_j)$ as the universal object.

Let the divisor $\Sigma \subset \orb Z$ be the union of the images of the sections $\sigma_j$.
Set $\stack Z = {\orb Z} \times_\rho \stack X$.
\begin{definition}[$\BrCov_d(\stack X, b)$]
Define $\BrCov_d(\stack X, b)$ as the category fibered in groupoids over $\Schemes$ whose objects over a scheme $T$ are
\[ (T \to {\orb X}[b], \phi \from {\orb P} \to {\stack Z}_T),\]
such that $\psi \from {\orb P} \to \orb Z_T$ induced by $\phi$ is representable, flat, and finite of degree $d$ with $\br \psi = \Sigma_T$.
\end{definition}

\begin{theorem}\label{prop:technical_main}
  $\BrCov_d(\stack X, b)$ is a Deligne--Mumford stack smooth and separated over $\C$ of dimension $b$.
  If the orders of the orbinodes of the fibers of $\orb Z \to {\orb X}[b]$ are sufficiently divisible, then it is also proper.
\end{theorem}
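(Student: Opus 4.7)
The plan is to reduce to Proposition \ref{prop:brcov_sigma} applied to the family $\pi \from \orb Z \to \orb X[b]$ with branch divisor $\Sigma$, and then account for the generic stabilizer structure separately via the gerbe $\stack X \to \orb X$.

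First I would verify that $(\orb Z/\orb X[b], \Sigma)$ satisfies the hypotheses of Proposition \ref{prop:brcov_sigma}: by construction the $\sigma_j$ are disjoint sections landing in the smooth representable locus of $\pi$, so $\Sigma$ is étale over $\orb X[b]$. This gives at once that $\BrCov_d(\orb Z/\orb X[b], \Sigma) \to \orb X[b]$ is a separated étale Deligne--Mumford stack of finite type, and proper when the orbinode indices are sufficiently divisible. Combining with Proposition \ref{prop:fm}, which says $\orb X[b]$ is smooth of dimension $b$ and (by construction from blowups of the proper scheme $X^b$ followed by the stacky modification of \cite{ols:07}) is itself proper, I conclude that $\BrCov_d(\orb Z/\orb X[b], \Sigma)$ is a smooth, separated Deligne--Mumford stack of dimension $b$, proper under the divisibility hypothesis.

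Next I would introduce the natural forgetful morphism
\[ F \from \BrCov_d(\stack X, b) \to \BrCov_d(\orb Z/\orb X[b], \Sigma)\]
sending $(T \to \orb X[b],\, \phi \from \orb P \to \stack Z_T)$ to the cover $\psi$ obtained by composing $\phi$ with the projection $\stack Z \to \orb Z$. The fiber over $\psi$ parametrizes lifts of $\psi$ against the pulled-back étale gerbe $\psi^*\stack Z \to \orb P$. Since $\stack X \to \orb X$ is an étale gerbe banded by a finite group scheme, so is $\psi^*\stack Z \to \orb P$; its category of sections over a proper twisted curve $\orb P$ is a (possibly empty) finite étale gerbe over a point. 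Consequently $F$ is representable, étale and proper with finite fibers, and all the good properties from the previous paragraph (smoothness, separatedness, Deligne--Mumfordness, dimension $b$, properness under divisibility) pass through $F$ to $\BrCov_d(\stack X, b)$.

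The main obstacle is controlling the behavior of $F$, and in particular verifying properness at the level of gerbe lifts. For étaleness, one has to check that the deformation theory of the pair $(\psi, \phi)$ relative to $\psi$ is unobstructed and rigid; this is immediate from the étaleness of $\stack Z \to \orb Z$, but requires some care at orbinodes of $\orb P$ where one must ensure the lift remains representable. For properness, I would use the valuative criterion over a DVR $\Delta$: given a lift $\phi_\eta$ on the generic fiber, one first extends $\psi_\eta$ to a cover $\psi \from \orb P \to \orb Z$ over $\Delta$ using the properness half of Proposition \ref{prop:brcov_sigma} (after the divisibility hypothesis, possibly enlarging $\Delta$). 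One then extends the gerbe lift across the central fiber by observing that $\psi^*\stack Z$ is a finite étale gerbe over the proper DM stack $\orb P$; sections extend across codimension-one specializations, possibly after a further finite base change, exactly because there is no local obstruction in an étale gerbe. This finite base change is absorbed in the valuative criterion for DM stacks. Combined with the absolute properness of $\BrCov_d(\orb Z/\orb X[b], \Sigma)$ established above, this gives the required absolute properness of $\BrCov_d(\stack X, b)$.
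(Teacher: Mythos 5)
Your overall strategy is the same as the paper's: factor through the forgetful map $F \from \BrCov_d(\stack X,b) \to \BrCov_d(\orb Z/\orb X[b],\Sigma)$, apply \autoref{prop:brcov_sigma} and \autoref{prop:fm} to the target, and treat the fibers of $F$ as stacks of sections of the pulled-back gerbe $\stack P = \orb P \times_\psi \stack Z \to \orb P$. The étaleness and separatedness of $F$ are handled as in the paper. The problem is your properness argument for $F$.

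You assert that a section of $\psi^*\stack Z$ given over the generic fiber of $\orb P \to \Delta$ extends ``because there is no local obstruction in an étale gerbe,'' after extending across codimension-one specializations. This is exactly where the argument breaks: extending across the generic points of the central fiber $\orb P_0$ (after a finite base change) leaves a codimension-two extension problem at the special points of $\orb P_0$, and for maps to a Deligne--Mumford stack such extensions are \emph{not} automatic. At smooth points and at nodes of $\orb P_0$ that are limits of nodes of the generic fiber the punctured local chart is simply connected, so the section does extend; but at a node of $\orb P_0$ that is not a limit of a node, the local model is $[\spec \C[x,y,t]/(xy-t^n)/\mu_r]$, whose punctured neighborhood has nontrivial local fundamental group (essentially $\mu_{nr}$), and a section of an étale gerbe need not extend across the puncture. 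This is precisely the obstruction that \autoref{prop:deep_orbinode} kills, and it is the only place where the divisibility hypothesis enters the lifting step: one must also observe, as the paper does, that since $\psi$ is finite of degree $d$ and étale over the nodes, the orders of the orbinodes of $\orb P$ are at worst $1/d$ times those of $\orb Z$, hence can still be taken sufficiently divisible so that the hypothesis $\sigma^r = 1$ of \autoref{prop:deep_orbinode} holds. Without this local analysis your valuative criterion for $F$ is incomplete, and the blanket claim of ``no local obstruction'' is false: for insufficiently divisible node orders the section genuinely may fail to extend, which is why the properness statement carries the divisibility hypothesis in the first place.
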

\begin{remark}
  Strictly speaking, $\BrCov_d(\stack X, b)$ is an abuse of notation since this object depends on the choice of ${\orb X}[b]$.
  However, as long as the nodes of $\orb Z \to {\orb X}[b]$ are sufficiently divisible, this choice will not play any role.
\end{remark}
\begin{proof}
  We have a natural transformation $\BrCov_d(\stack X, b) \to \BrCov_d(\orb Z / {\orb X}[b], \Sigma)$ defined by $\phi \mapsto \psi$.
  Let $S$ be a scheme with a map $\mu \from S \to \BrCov_d(\orb Z / {\orb X}[b], \Sigma)$.
  Such $\mu$ is equivalent to $(\pi \from {\orb P} \to S, \psi \from \orb P \to \orb Z_S)$.
  Then $S \times_\mu \BrCov_d(\stack X, b)$ is just the stack of lifts of $\psi$ to $\stack Z_S$.
  Equivalently, setting $\stack P = \orb P \times_{\phi} \stack Z$, the objects of $S \times_\mu \BrCov_d(\stack X, b)$ over $T/S$ are sections ${\orb P}_T \to {\stack P}_T$ of ${\stack P}_T \to {\orb P}_T$.
  We denote the latter by $\Sect_S(\stack P \to \orb P)$.

  That $S \times_\mu \BrCov_d(\stack X, b) = \Sect_S(\stack P \to \orb P)$ is a separated Deligne--Mumford stack of finite type over $S$ follows from \cite[Theorem~1.4.1]{abr.vis:02}.
  That $\Sect_S(\stack P \to \orb P) \to S$ is \'etale follows from the property that an \'etale morphism (here $\stack P \to \orb P$) is formally \'etale (that is, its sections have a unique infinitesimal extension property).
  This is standard for \'etale morphisms of schemes and not hard to check for Deligne--Mumford stacks (see \cite[Corollary~B.9]{ryd:11}).
  
  Assume that the orders of the nodes of $\orb Z_S \to S$ are sufficiently divisible.
  Let $p \in \orb P$ be a node of a fiber of $\orb P \to S$ and set $z = \psi(p)$.
  Since $\psi$ is representable, it induces an inclusion of stabilizers $\Aut_p \orb P \to \Aut_z \orb Z$.
  Since $\psi$ is finite of degree $d$, the quotient $\Aut_z \orb Z/\Aut_p \orb P$ has order $c$ with $c \leq d$.
  In other words, the order of $\Aut_p \orb P$ is $1/c$ times the order of $\Aut_z \orb Z$ where $c \leq d$.
  So we may assume that the orders of the nodes of $\orb P \to S$ are also sufficiently divisible.
  To check that $\Sect_S(\stack P \to \orb P) \to S$ is proper, let $S = \Delta$ be a DVR and let a section $s \from \orb P \to \stack P$ be given over the generic point of $\Delta$.
  First, $s$ extends to a section over the generic points of $\orb P_0$ after replacing $\Delta$ by a finite cover.
  Second, $s$ extends to a section over the smooth points and the generic nodes of $\orb P_0$ since $\orb P$ is locally simply connected and $S_2$ at these points.
  Finally, $s$ extends over the non-generic nodes by \autoref{prop:deep_orbinode} (the extension of the section on the coarse spaces follows from normality).
  
  We have thus proved that $\BrCov_d(\stack X, b) \to \BrCov_d(\orb Z/\orb X[b], \Sigma)$ is representable by a separated \'etale morphism of Deligne--Mumford stacks which is also proper if the orders of the nodes of $\orb Z \to \orb X[b]$ are sufficiently divisible.
  By combining this with \autoref{prop:fm}, we complete the proof.
\end{proof}

Let $\Maps(\stack X,d)$ be the Abramovich--Vistoli space of twisted stable maps to $\stack X$.
This is the moduli space of $\phi \from \orb P \to \stack X$, where $\orb P$ is a twisted curve and $\phi$ is a representable morphism such that the map on the underlying coarse spaces is a Kontsevich stable map that maps the fundamental class of $\coarse{\orb P}$ to $d$ times the fundamental class of $\coarse{\stack X}$.

\begin{proposition}\label{prop:map_to_av}
  $\BrCov_d(\stack X, b)$ admits a morphism to $\Maps(\stack X, d)$.
\end{proposition}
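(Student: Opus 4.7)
The plan is to construct the morphism by composing with the projection $\stack Z \to \stack X$ and then applying the Abramovich--Vistoli stabilization to the result.

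Given an object $(T \to \orb X[b], \phi \from \orb P \to \stack Z_T)$ of $\BrCov_d(\stack X, b)(T)$, let $f \from \orb P \to \stack X$ be the composition of $\phi$ with the projection $\stack Z_T = \orb Z_T \times_{\orb X} \stack X \to \stack X$. On coarse spaces, $\coarse f$ pushes the fundamental class of $\coarse{\orb P}$ to $d\,[\coarse{\stack X}]$, because $\psi$ has degree $d$ and $\coarse\rho$ is an isomorphism on the main component of $\orb Z$ while contracting all attached rational trees to points.

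The pair $(\orb P, f)$ is generally not yet a twisted stable map, but any defects are confined to the components of $\orb P$ lying over those contracted rational trees: on those components $\coarse f$ collapses, jeopardizing Kontsevich stability, and their orbinodes map to schematic points of $\orb X$ where the kernel of $\Aut\orb Z \to \Aut \orb X$ can spoil representability of $f$. Applying the stabilization procedure for twisted stable maps from \cite{abr.vis:02} contracts precisely these components and produces $f^{st} \from \orb P^{st} \to \stack X_T$ of coarse degree $d$. On the surviving part of $\orb P^{st}$, the only orbifold smooth points lie over the marked orbifold points $\widetilde x_i$ on the main component of $\orb Z$, where $\rho$ induces an isomorphism $\Aut_{\widetilde x_i}\orb Z \xrightarrow{\sim} \Aut_{x_i}\orb X$. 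The fiber product description $\stack Z = \orb Z \times_{\orb X} \stack X$ then yields an isomorphism $\Aut_{\phi(p)}\stack Z \xrightarrow{\sim} \Aut_{f(p)}\stack X$ at such a point $p$, under which the stabilizer of $\orb P^{st}$ at $p$ is identified with its image in $\Aut_{\widetilde x_i}\orb Z$; this image is injective by representability of $\psi$, so $f^{st}$ is representable at $p$. The analogous argument at the orbinodes of $\orb P^{st}$ created by stabilization completes the verification of representability.

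Since stabilization commutes with base change on $T$, the assignment $(\orb P, \phi) \mapsto (\orb P^{st}, f^{st})$ is functorial in $T$ and defines the desired morphism $\BrCov_d(\stack X, b) \to \Maps(\stack X, d)$. The main subtle point is to justify that stabilization applies even when $f$ is not representable on the components to be contracted; the cleanest route is to perform Kontsevich stabilization on coarse spaces first, then equip the resulting stable coarse curve with the unique minimal twisted structure that recovers representability, using the representability established above on the surviving components together with the standard orbinode construction at the newly created nodes.
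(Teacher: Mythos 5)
Your proposal is correct and follows essentially the same route as the paper: compose the universal $\orb P \to \stack Z$ with the projection $\stack Z \to \stack X$ and then invoke the Abramovich--Vistoli stabilization (the paper cites \cite[Proposition~9.11]{abr.vis:02} for exactly this factorization $\orb P \to \orb P' \to \stack X$). Your additional discussion of representability on the contracted components is not needed as a separate argument, since that result already produces a twisted stable map directly.
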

\begin{proof}
  On $\BrCov_d(\stack X, b)$ we have a universal twisted curve $\orb P \to \BrCov_d(\stack X, b)$ with a morphism $\orb P \to \stack X$. This morphism is obtained by composing the universal $\orb P \to \stack Z$ with $\rho \from \stack Z \to \stack X$.
  By \cite[Proposition~9.11]{abr.vis:02}, there exists a factorization $\orb P \to \orb P' \to \stack X$, where $\orb P' \to \BrCov_d(\stack X, b)$ is a twisted curve and $\orb P' \to \stack X$ is a twisted stable map.
  On coarse spaces, this factorization is the contraction of unstable rational components.
  The twisted stable map $\orb P' \to \stack X$ gives the morphism $\BrCov_d(\stack X, b) \to \Maps(\stack X, d)$.
\end{proof}

\section{Moduli of tetragonal curves on Hirzebruch surfaces}\label{sec:hirz}
In this section, we apply our results to $\stack X = [\overline M_{0,4}/\sym_4]$.
Set
\[ \widetilde{\stack M}_{0,4} := [\overline M_{0,4}/\sym_4].\]
Interpret the quotient as the moduli space of stable marked rational curves where the marking consists of a divisor of degree 4.
Let 
\[\pi \from (\widetilde{\orb S}, \widetilde{\orb C})  \to \widetilde{\stack M}_{0,4}\]
be the universal family, where $\widetilde{\orb S} \to \widetilde{\stack M}_{0,4}$ is a nodal curve of genus $0$ and $\widetilde{\orb C} \subset \widetilde{\orb S}$ a divisor of relative degree 4.

The action of $\sym_4$ on $\overline M_{0,4}$ has a kernel: the Klein four group
\[ K = \{ \id, (12)(34), (13)(24), (14)(23)\},\]
acts trivially.
Therefore, a generic $t \to \widetilde{\stack M}_{0,4}$ has automorphism group $K$.
The action of $K$ on $\widetilde{\orb S}_t$ and $\widetilde{\orb C}_t$ is faithful.
There are three special points $t$ at which $\Aut_t \widetilde{\stack M}_{0,4}$ jumps.
The first, which we label $t = 0$, is specified by
\[ (\widetilde{\orb S}_t, \widetilde{\orb C}_t) \cong (\P^1, \{1, i, -1, -i\}),\]
with $\Aut_0\widetilde{\stack M}_{0,4} = D_4 \subset \sym_4$.
The second, which we label $t = 1$, is specified by
\[ (\widetilde{\orb S}_t, \widetilde{\orb C}_t) \cong (\P^1, \{0, 1, e^{2\pi i/3}, e^{-2\pi i/3}\}),\]
with $\Aut_1\widetilde{\stack M}_{0,4} = A_4 \subset \sym_4$.
The third, which we label $t = \infty$, is specified by
\[ (\widetilde{\orb S}_t, \widetilde{\orb C}_t) \cong (\P^1 \cup \P^1, \{0,1; 0,1\})\]
where the two $\P^1$s are attached at a node (labeled $\infty$ on both).
We have $\Aut_\infty\widetilde{\stack M}_{0,4} = D_4 \subset \sym_4$. 

The quotient $\sym_4/K$ is isomorphic to $\sym_3$.
Therefore, the orbifold curve underlying $\widetilde{\stack M}_{0,4}$ is the quotient $[\overline M_{0,4}/\sym_3]$.
Consider the inclusion $\sym_3 \subset \sym_4$ as permutations acting only on the first three elements.
The inclusion $\sym_3 \to \sym_4$ is a section of the projection $\sym_4 \to \sym_3$.
We can thus think of $\sym_3$ as acting on $\overline M_{0,4}$ by permuting three of the four points and leaving the fourth fixed.
Set $\widetilde{\orb M}_{0,1+3} := [\overline M_{0,4}/\sym_3]$.
We can interpret this quotient as the moduli space of stable marked rational curves, where the marking consists of a point and a divisor of degree 3 (hence the notation ``1+3'').

We have the fiber product diagram
\[
\begin{tikzpicture}
  \node (M1) {$\widetilde{\stack M}_{0,4}$};
  \draw (M1)+(0,-1) node (M2) {$\widetilde{\orb M}_{0,1+3}$};
  \draw (M1)+(2,0) node (S4) {$B\sym_4$};
  \node[below of=S4] (S3) {$B\sym_3$};
  \draw [->] (M1) edge (M2) (S4) edge (S3) (M1) edge (S4) 
  (M2) edge (S3);
\end{tikzpicture}.
\]
Since $\sym_4 = K \rtimes \sym_3$, the map $B\sym_4 \to B\sym_3$ is the trivial $\orb K$ gerbe $B\orb K$, where $\orb K$ is the sheaf of groups on $B \sym_3$ obtained by the action of $\sym_3$ on $K$ by conjugation.
Therefore, we get that $\widetilde{\stack M}_{0,4} = B {\orb K} \times_{B\sym_3} \widetilde{\orb M}_{0,1+3}$.

\subsection{The tetragonal-trigonal correspondence}
The relation $\sym_4 = K \rtimes \sym_3$ gives a stacky perspective on a classical relation between quadruple and triple covers explored by Recillas \cite{rec:73}.
\begin{proposition}\label{prop:recillas}
  Let $\orb P$ be a Deligne--Mumford stack.
  We have a natural bijection between $\{\phi \from \orb C \to \orb P\},$ where $\phi$ is a finite \'etale cover of degree 4 and $\{(\psi \from \orb D \to \orb P, \orb L)$ where $\psi$ is a finite \'etale cover of degree 3 and $\orb L$ a line bundle on $\orb D$ with $\orb L^2 = \O_{\orb D}$ and $\Norm_\psi \orb L = \O_{\orb P}$.
  Furthermore, under this correspondence we have $\phi_* \O_{\orb C} = \O_{\orb P} \oplus \psi_* \orb L$.
\end{proposition}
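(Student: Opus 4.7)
The plan is to translate the statement into group-theoretic data via the \'etale fundamental group and to reduce everything to the split short exact sequence $1 \to K \to \sym_4 \to \sym_3 \to 1$. Assuming first that $\orb P$ is connected with chosen geometric base point, I write $\pi := \pi_1^{\text{\'et}}(\orb P)$; the general case follows component by component. By Grothendieck's Galois theory for Deligne--Mumford stacks, finite \'etale covers of $\orb P$ correspond to finite $\pi$-sets. So a degree $4$ cover $\phi$ gives a conjugacy class $[\rho \from \pi \to \sym_4]$; a degree $3$ cover $\psi$ gives $[\bar\rho \from \pi \to \sym_3]$; and for such $\psi$ the group $\pi_1^{\text{\'et}}(\orb D)$ is $\bar\rho^{-1}(\sym_2)$ for a point stabilizer $\sym_2 \subset \sym_3$, so $2$-torsion line bundles $\orb L$ on $\orb D$ correspond to characters $\chi \from \bar\rho^{-1}(\sym_2) \to \Z/2$.

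The key algebraic input is the isomorphism of $\sym_3$-modules $\mathrm{Ind}_{\sym_2}^{\sym_3}(\Z/2) \cong \Z/2 \oplus K$, where the first summand is trivial and the second is the augmentation kernel, identified with the Klein four $K \subset \sym_4$ via the conjugation action of $\sym_3 = \sym_4/K$. Shapiro's lemma then gives
\[
H^1(\bar\rho^{-1}(\sym_2),\Z/2) \cong H^1(\pi,\Z/2) \oplus H^1_{\bar\rho}(\pi,K),
\]
and one checks that the transfer (which computes $\Norm_\psi$ on $2$-torsion Picard groups) corresponds under this decomposition to projection onto the first summand. Hence pairs $(\psi,\orb L)$ with $\Norm_\psi \orb L = \O_{\orb P}$ correspond to pairs $(\bar\rho,[\sigma])$ with $[\sigma] \in H^1_{\bar\rho}(\pi,K)$, and by the standard parametrization of homomorphisms into a semidirect product these are in bijection with conjugacy classes of lifts $\rho \from \pi \to \sym_4$ of $\bar\rho$. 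This establishes the claimed bijection.

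The forward map admits an explicit geometric description: $\orb D$ is the quotient of the relative Hilbert scheme $\mathrm{Hilb}^2_{\orb P}(\orb C)$ by the fiberwise-complement involution $Z \mapsto \orb C_p \setminus Z$, and $\orb L$ is the $2$-torsion line bundle associated to the resulting \'etale double cover $\mathrm{Hilb}^2_{\orb P}(\orb C) \to \orb D$. The decomposition formula $\phi_* \O_{\orb C} = \O_{\orb P} \oplus \psi_* \orb L$ translates under the monodromy dictionary into the identity of $\sym_4$-representations
\[
\C[\sym_4/\sym_3] \cong \C \oplus \mathrm{Ind}_{D_4}^{\sym_4} \chi,
\]
where $D_4 \subset \sym_4$ is the stabilizer of a pair-partition of $\{1,2,3,4\}$ and $\chi$ is the nontrivial character of $D_4/\langle(12),(34)\rangle \cong \Z/2$; this is verified by comparing characters on the five conjugacy classes of $\sym_4$. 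The main obstacle is pinning down the identification of the norm map with the Shapiro-lemma projection onto the trivial summand; once that dictionary is in place, both the bijection and the decomposition of $\phi_* \O_{\orb C}$ become routine representation-theoretic bookkeeping.
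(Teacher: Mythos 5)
Your proposal is correct and is essentially the paper's own argument translated into the language of the \'etale fundamental group: the paper likewise uses $\sym_4 = K \rtimes \sym_3$ to identify degree-$4$ covers lifting a given $\psi$ with classes in $H^1(\orb P, \orb K)$, identifies this with the kernel of the norm on two-torsion line bundles via the permutation-module sequence $0 \to \orb K \to \psi_*(\Z_2)_{\orb D} \to (\Z_2)_{\orb P} \to 0$ (your split form $\mathrm{Ind}_{\sym_2}^{\sym_3}\Z_2 \cong \Z_2 \oplus K$ together with Shapiro's lemma and transfer $=$ norm is the same computation), and proves the pushforward formula by the same reduction to a universal identity of $\sym_4$-representations on $B\sym_4$. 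The only cosmetic differences are that the paper verifies $\C \oplus \C[\widetilde{\orb D}] \cong \C[\orb D] \oplus \C[\orb C]$ and cancels $\psi_*\O_{\orb D}$ instead of computing the character of $\mathrm{Ind}_{D_4}^{\sym_4}\chi$ directly, and that its sheaf-theoretic phrasing avoids your implicit assumption that $\orb D$ is connected (in the paper's applications it often is not), though your argument adapts at once by replacing $\mathrm{Ind}$ from a point stabilizer with the permutation module of the three-element $\pi$-set.
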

\begin{proof}
  This is essentially the content of \cite{rec:73}.
  We sketch a proof using stacks.

  An \'etale cover of degree $d$ is equivalent to a map to $B\sym_d$.
  From $\sym_4 = K \rtimes \sym_3$, we get that an \'etale cover $\orb C \to \orb P$ of degree 4 is equivalent to a map $\mu \from \orb P \to B\sym_3$ and a section of $\orb K \times_\mu \orb P$.
  Such a section is in turn equivalent to an element of $H^1(\orb P, \orb K)$.
  Let $\psi \from \orb D \to \orb P$ be the \'etale cover of degree 3 corresponding to $\mu$.
  Denote $\orb K\times_\mu \orb P$ by $\orb K$ for brevity.
  We have the following exact sequence on $\orb P$ (pulled back from an analogous exact sequence on $B\sym_3$):
  \[ 0 \to \orb K \to \psi_* \left({\Z_2}\right)_{\orb D} \to \left(\Z_2\right)_{\orb P} \to 0.\]
  The associated long exact sequence gives
  \[ H^1(\orb P, \orb K) = \ker\left(H^1(\orb D, \Z_2) \to H^1(\orb P, \Z_2)\right).\]
  If we interpret $H^1(-,\Z_2)$ as two-torsion line bundles, then the map $H^1(\orb D, \Z_2) \to H^1(\orb P, \Z_2)$ is the norm map.
  The bijection follows.

  In the rest of the proof, we view the data of the line bundle $\orb L$ as the data of an \'etale double cover $\tau \from \widetilde{\orb D} \to \orb D$.
  The double cover and the line bundle are related by $\tau_* \O_{\widetilde {\orb D}} = \O_{\orb D} \oplus \orb L$.

  It suffices to prove the last statement universally on $B\sym_4$---it will then follow by pullback.
  A cover of $B\sym_4$ is just a set with an $\sym_4$ action and a sheaf on $B\sym_4$ is just an $\sym_4$-representation.
  Consider the 4-element $\sym_4$-set $\orb C = \{1,2,3,4\}$.
  The corresponding 3-element $\sym_4$-set $\orb D$ with an \'etale double cover $\tau \from \widetilde {\orb D} \to {\orb D}$ is given by
  \[ \widetilde {\orb D} = \{(12),(13),(14),(23),(24),(34)\} \xrightarrow{\tau} {\orb D} = \{(12)(34), (13)(24), (14)(23)\}.\]
  It is easy to check that we have an isomorphism of $\sym_4$-representations
  \[ \C \oplus \C [\widetilde {\orb D} ] = \C [ {\orb D}] \oplus \C[ \orb C ].\]
  In terms of the map $\phi \from \orb C \to B\sym_4$, the map $\psi \from {\orb D} \to B\sym_4$, and the map $\tau \from \widetilde {\orb D} \to {\orb D}$, this isomorphism can be written as an isomorphism of sheaves on $B\sym_4$:
  \[ \O \oplus \psi_* \O_{\orb D} \oplus \psi_* {\orb L} = \psi_*\O_{\orb D} \oplus \phi_*\O_{\orb C}.\]
  Canceling $\psi_* \O_{\orb D}$ gives the statement we want.
\end{proof}

\subsection{Tetragonal curves on Hirzebruch surfaces and covers of $\widetilde {\stack M}_{0,4}$}
Let $f \from S \to \P^1$ be a $\P^1$-bundle and $C \subset S$ a smooth curve such that $f \from C \to \P^1$ is a finite simply branched map of degree $4$.
Away from the $b = 2 g(C) + 6$ branch points $p_1, \dots, p_b$ of $C \to \P^1$, we get a morphism
\[ \phi \from \P^1 \setminus \{p_1, \dots, p_b\} \to \widetilde{\stack M}_{0,4} \setminus \{\infty\}.\]
Set $\orb P = \P^1(\sqrt{p_1}, \dots, \sqrt p_b)$.
Abusing notation, denote the point of $\orb P$ over $p_i$ by the same letter.
Then $\phi$ extends to a morphism $\phi \from \orb P \to \widetilde{\stack M}_{0,4}$, which maps $p_1, \dots, p_b$ to $\infty$, is \'etale over $\infty$, and the underlying map of orbifolds $\orb P \to \widetilde{\orb M}_{0;1+3}$ is representable of degree $b$.
We can construct the family of 4-pointed rational curves that gives $\phi$ as follows.
Consider the $\P^1$-bundle $S \times_{\P^1} \orb P \to \orb P$ and the curve $C \times_{\P^1} \orb P \subset S \times_{\P^1} \orb P$.
Since $C \to \P^1$ was simply branched, $C \times_{\P^1} \orb P$ has a node over each $p_i$.
Let $\widetilde S \to S \times_{\P^1} \orb P$ be the blow up at these nodes and $\widetilde C$ the proper transform of $C$.
The pair $(\widetilde S, \widetilde C)$ over $\orb P$ gives the map $\phi \from \orb P \to \widetilde{\stack M}_{0,4}$.
The geometric fiber of $(\widetilde S, \widetilde C)$ over $p_i$ is isomorphic to $(\P^1 \cup \P^1, \{0,1; 0,1\})$ where we think of the $\P^1$s as joined at $\infty$.
The action of $\Z_2 = \Aut_{p_i} \orb P$ is trivial on one component and is given by $x \mapsto 1-x$ on the other component.

Conversely, let $\phi \from \orb P \to \widetilde{\stack M}_{0,4}$ be a morphism that maps $p_1, \dots, p_b$ to $\infty$, is \'etale over $\infty$, and the underlying map of orbifolds $\orb P \to \widetilde{\orb M}_{0;1+3}$ is representable of degree $b$.
Let $f \from (\widetilde S, \widetilde C) \to \orb P$ be the corresponding family of 4-pointed rational curves.
Away from $p_1, \dots, p_b$, the map $f \from  \widetilde S \to \orb P$ is a $\P^1$-bundle.
Locally near $p_i$, we have
\[ \orb P = [\spec \C[t] / \Z_2].\]
Set $U = \spec \C[t]$.
Since $\phi$ is \'etale at $t = 0$, the total space $\widetilde S_U$ is smooth.
Since $t = 0$ maps to $\infty$, the fiber of $f$ over $0$ is isomorphic to $(\P^1 \cup \P^1, \{0,1,0,1\})$ where we think of the $\P^1$s as joined at $\infty$.
Consider the map $\Z_2 = \Aut_{p_i} \orb P \to D_4 = \Aut_\infty \widetilde{\stack M}_{0,4}$.
Since the map induced by $\phi$ on the underlying orbifolds is representable, the image of the generator of $\Z_2$ is an element of order 2 in $D_4$ not contained in the Klein four subgroup.
The only possibility is a 2-cycle, whose action on the fiber is trivial on one component and $x \mapsto 1-x$ on the other.
Let $\widetilde S_U \to S'_U$ be obtained by blowing down the component on which the action is non-trivial and let $C'_U \subset S'_U$ be the image of $\widetilde C_U$.
Note that the $\Z_2$ action on $(\widetilde S_U, \widetilde C_U)$ descends to an action on $(S'_U, C'_U)$ which is trivial on the central fiber.
Thus $S'_U/\Z_2 \to U/\Z_2$ is a $\P^1$ bundle and $C'_U/\Z_2 \to U/\Z_2$ is simply branched (the quotients here are geometric quotients, not stack quotients).
Let $(S, C)$ be obtained from $(\widetilde S, \widetilde C)$ by performing this blow down and quotient operation around every $p_i$.
Then $f \from S \to \P^1$ is a $\P^1$-bundle and $C \subset S$ is a smooth curve such that $C \to \P^1$ is a finite, simply branched cover of degree 4.
We call the construction of $(S,C)$ from $(\widetilde S, \widetilde C)$ the \emph{blow-down construction}.

We thus have a natural bijection
\begin{equation}\label{eqn:basic_bijection}
  \{f \from (S, C) \to \P^1 \} \leftrightarrow \{ \phi \from \orb P \to \widetilde{\stack M}_{0,4}\},
\end{equation}
where on the left we have a $\P^1$-bundle $S \to \P^1$ and a smooth curve $C \subset S$ such that $f \from C \to \P^1$ is a finite, simply branched cover of degree 4 and on the right we have $\orb P = \P^1(\sqrt p_1, \dots, \sqrt p_b)$ and $\phi$ that maps $p_i$ to $\infty$, is \'etale over $\infty$, and induces a representable finite map of degree $b$ to $\widetilde{\orb M}_{0,1+3}$.

Assume that $C \subset S$ on the left is general so that the induced map $\orb P \to \widetilde{\orb M}_{0,1+3}$ is simply branched over distinct points away from $0$, $1$, or $\infty$.
Then the monodromy of $\orb P \to \widetilde{\orb M}_{0,1+3}$ over $0$, $1$, and $\infty$ is given by a product of $2$-cycles, a product of $3$-cycles, and identity, respectively.
Said differently, the map $\coarse{\phi} \from \P^1 \to \P^1$ has ramification $(2,2,\dots)$ over $0$; ramification $(3,3,\dots)$ over $1$, and ramification $(1,1,\dots)$ over $\infty$.
In particular, the degree $b$ of $\coarse\phi$ is divisible by $6$.
Taking $b = 6d$, the map $\phi \from \orb P \to \widetilde{\orb M}_{0;1+3}$ has $5d-2$ branch points.

\begin{definition}[$\orb Q_d$ and $\orb T_d$]
Denote by $\orb Q_d$ the open and closed substack of $\BrCov_{6d}(\widetilde{\stack M}_{0,4}, 5d-2)$ that parametrizes covers with connected domain and ramification $(2,2,\dots)$ over $0$, ramification $(3,3,\dots)$ over $1$, and ramification $(1,1,\dots)$ over $\infty$.
Likewise, denote by $\orb T_d$ the open and closed substack of $\BrCov_{6d}(\widetilde{\orb M}_{0,1+3}, 5d-2)$ defined by the same two conditions.
\end{definition}
Let $\overline{\orb H}_{d,g}$ be the space of admissible covers of degree $d$ of genus 0 curves by genus $g$ curves as in \cite{abr.cor.vis:03}.
Recall that the \emph{directrix} of $\F_n$ is the unique section of $\F_n \to \P^1$ of negative self-intersection.
\begin{proposition}\label{prop:components}
$\orb Q_d$ is a smooth and proper Deligne--Mumford stack of dimension $5d-2$.
For $d \geq 2$,  it has three connected ($=$ irreducible) components $\orb Q_d^0$, $\orb Q_d^{\rm odd}$, and $\orb Q_d^{\rm even}$.
Via \eqref{eqn:basic_bijection}, general points of these components correspond to the following $f \from (S, C) \to \P^1$:
  \begin{itemize}
  \item [\quad $\orb Q_d^0$:] $S = \F_d$ and $C = $ a disjoint union of the directrix $\sigma$ and a general curve of class $3(\sigma + d F)$.
  \item [\quad $\orb Q_d^{\rm odd}$:] $S = \F_1$ and $C = $ a general curve of class $4 \sigma + (d+2)F$.
  \item [\quad $\orb Q_d^{\rm even}$:] $S = \F_0$ and $C = $ a general curve of class $(4,d)$.
  \end{itemize}
  The components of $\orb Q_d$ admit morphisms to the corresponding spaces of admissible covers, namely $\orb Q_d^0 \to \overline {\orb H}_{3,3d-2}$, $\orb Q_d^{\rm odd} \to \overline {\orb H}_{4, 3d-3}$, and $\orb Q_d^{\rm even} \to \overline {\orb H}_{4,3d-3}$.
\end{proposition}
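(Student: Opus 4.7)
The smoothness, separatedness, properness, and dimension $5d-2$ of $\orb Q_d$ follow directly from \autoref{prop:technical_main} applied to $\stack X = \widetilde{\stack M}_{0,4}$ with degree $6d$ and $b = 5d-2$ branch points. The hypothesis on divisibility of orbinode orders is ensured by invoking \autoref{prop:fm}\eqref{deep nodes}. The locus $\orb Q_d \subset \BrCov_{6d}(\widetilde{\stack M}_{0,4}, 5d-2)$ is open and closed in the ambient $\BrCov$ because connectedness of the domain $\orb P$ and the local ramification profiles over $0$, $1$, $\infty$ are invariants locally constant in families.

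For the component decomposition, I would analyze a generic point of $\orb Q_d$ via the bijection \eqref{eqn:basic_bijection}. Such a point corresponds to a pair $(f \from S \to \P^1, \, C \subset S)$ with $S = \F_n$ a Hirzebruch surface and $C \subset S$ a smooth curve of class $4\sigma + kF$. Riemann--Hurwitz applied to $f|_C$ gives $g(C) = 3(d-1)$, and adjunction on $\F_n$ then forces $k = d + 2n$. This leaves three mutually exclusive possibilities: $C$ is irreducible with $n$ even (generically $n=0$ with $C$ of class $(4,d)$ on $\F_0$); $C$ is irreducible with $n$ odd (generically $n=1$ with $C$ of class $4\sigma + (d+2)F$ on $\F_1$); or $C$ contains the directrix $\sigma$, in which case the residual trisection has class $3(\sigma + dF)$ on $\F_d$. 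The reducible case is separated from the irreducible ones because the $\sym_4$-monodromy of the corresponding degree-$4$ cover has a fixed sheet, a discrete invariant locally constant in families. The parity of $n$ in the irreducible cases is locally constant because degenerations of Hirzebruch surfaces that preserve the class $4\sigma + kF$ only take $\F_n$ to $\F_{n-2}$. I would rule out other reducible configurations (two disjoint bisections, a section-plus-section-plus-bisection, etc.) by intersection calculations on $\F_n$ showing they are incompatible with the required class $4\sigma + (d+2n)F$ for $d \geq 1$.

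To promote each of these three generic strata to a bona fide irreducible component, I would observe that, at a fixed $\F_n$, the parameter space of smooth curves $C$ of the specified class is an open subset of a complete linear system, hence irreducible; the quotient by the automorphism group of $\F_n$ preserving the ruling remains irreducible. Smoothness of $\orb Q_d$ (established in the first paragraph) then promotes each dense irreducible open stratum to a single connected (equivalently, irreducible) component, yielding exactly the three claimed pieces $\orb Q_d^0$, $\orb Q_d^{\rm odd}$, $\orb Q_d^{\rm even}$.

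Finally, for the morphisms to admissible cover spaces: in the components $\orb Q_d^{\rm even}$ and $\orb Q_d^{\rm odd}$, the degree-$4$ map $C \to \P^1$ on the generic stratum extends across the boundary as an admissible cover via the universal twisted cover furnished by \autoref{prop:technical_main} (after taking coarse spaces and applying the blow-down construction inverse to \eqref{eqn:basic_bijection}), giving the morphism to $\overline{\orb H}_{4, 3d-3}$. For $\orb Q_d^0$, the Recillas correspondence (\autoref{prop:recillas}) produces a universal degree-$3$ resolvent cover which, on the generic stratum $C = \sigma \sqcup C'$, is identified with the trisection $C'$; its admissible-cover completion yields the map to $\overline{\orb H}_{3, 3d-2}$. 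The main obstacle will be rigorously executing the classification in the second paragraph: completing the monodromy-and-degeneration argument for the three components and excluding all reducible embedding patterns other than section-plus-trisection.
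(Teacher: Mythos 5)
Your overall strategy matches the paper's: get smoothness, properness, and dimension from \autoref{prop:technical_main}, reduce to the dense open locus $U$ of non-degenerate covers, translate via \eqref{eqn:basic_bijection} into pairs $(\F_n, C)$ with $[C]=4\sigma+(d+2n)F$, and split $U$ by disconnectedness of $C$ and by the parity of $n$. The disconnected case and the claim that parity is locally constant are handled essentially as in the paper, and leaving the exclusion of other reducible configurations to intersection calculations is fine.

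The genuine gap is at the irreducibility of the even and odd loci. You assert that ``generically $n=0$'' (resp.\ $n=1$) and then say that smoothness of $\orb Q_d$ ``promotes each dense irreducible open stratum to a single connected component'' --- but the density of the $\F_0$ (resp.\ $\F_1$) stratum inside $U^{\rm even}$ (resp.\ $U^{\rm odd}$) is exactly what has to be proved, so this step is circular. Your observation that deformations of Hirzebruch surfaces preserve the parity of $n$ only shows that $U^{\rm even}$ and $U^{\rm odd}$ are open and closed in $U$; it does not show that a pair $(\F_n, C)$ with $n\geq 2$ deforms, \emph{together with its curve}, to a pair on $\F_0$ or $\F_1$, and without that the even locus could a priori break into several components, one for each occurring $n$. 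The paper closes this by noting that smoothness of $C$ forces the numerical inequality giving $H^1(S,\O_S(C))=H^2(S,\O_S(C))=0$, so deformations of the pair are unobstructed and surject onto deformations of the surface; hence every $(S,C)$ with $n$ even (odd) is a limit of $(\F_0, C_{\rm gen})$ (resp.\ $(\F_1, C_{\rm gen})$), making $U^{\rm even}$ and $U^{\rm odd}$ irreducible. Alternatively, one could salvage your argument by a stratum-by-stratum dimension count (the $n\geq 2$ even and $n\geq 3$ odd strata have dimension strictly less than $5d-2$, so by purity of dimension on the smooth stack every point lies in the closure of the minimal stratum), but neither this count nor the cohomological argument appears in your proposal, and you yourself flag this classification step as incomplete. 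The remaining items (the morphisms to $\overline{\orb H}_{3,3d-2}$ and $\overline{\orb H}_{4,3d-3}$) are treated in the paper by the stabilization argument of \autoref{prop:map_to_av}; your Recillas-based description for $\orb Q_d^0$ is a reasonable variant but is not needed.
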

\begin{proof}
  That $\orb Q_d$ is a smooth and proper Deligne--Mumford stack of dimension $5d-2$ follows from \autoref{prop:technical_main}.
  That the components admit morphisms to the spaces of admissible covers follows from the same argument as in \autoref{prop:map_to_av}.

  Recall that $\orb Q_d$ parametrizes covers of $\widetilde{\stack M}_{0,4}$ and its degenerations.
  Let $U \subset \orb Q_d$ be the dense open subset of non-degenerate covers.
  It suffices to show that $U$ has three connected components.
  Via \eqref{eqn:basic_bijection}, the points of $U$ parametrize $f \from (S, C) \to \P^1$, where $S \to \P^1$ is a $\P^1$-bundle and $C \subset S$ is a smooth curve such that $C \to \P^1$ is simply branched of degree 4.
  Say $S = \F_n$.
  Since $C \to \P^1$ is degree 4 and ramified at $6d$ points, we get
  \[ [C] = 4\sigma + (d+2n) F.\]

  Let $U^0 \subset U$ be the open and closed subset where $C$ is disconnected.
  Since $C$ is smooth, the only possibility is $n = d$ and $C$ is the disjoint union of $\sigma$ and a curve in the class $3(\sigma+dF)$.
  As a result, $U^0$ is irreducible and hence a connected component of $U$.

  Let $U^{\rm even} \subset U$ be the open and closed subset where $n$ is even.
  Since $C$ is smooth, we must have $d+2n \geq 4d$.
  In particular, $H^1(S, \O_S(C)) = H^2(S, \O_S(C)) = 0$, and hence $(S, C)$ is the limit of $(\F_0, C_{\rm gen})$, where $C_{\rm gen} \subset \F_0$ is a curve of type $(4, d)$.
  Therefore, $U^{\rm even}$ is irreducible, and hence a connected component of $U$.

  By the same reasoning, the open and closed subset $U^{\rm odd} \subset U$ where $n$ is odd is the third connected component of $U$.
\end{proof}  

\subsection{The odd and even components of $\orb Q_d$ and theta characteristics}\label{sec:parity}
There is a second explanation for the connected components of $\orb Q_d$, which involves the theta characteristics of the trigonal curve $D \to \P^1$ associated to the tetragonal curve $C \to \P^1$ via the Recillas correspondence (see \cite{vak:01}).
Let $V \subset \orb T_d$ be the open set parametrizing non-degenerate covers of $\widetilde{\orb M}_{0,1+3}$.
It is easy to check that $V$ is irreducible and the map $\orb Q_d \to \orb T_d$ is representable, finite, and \'etale over $V$.
Therefore, the connected components of $\orb Q_d$ correspond to the orbits of the monodromy of $\orb Q_d \to \orb T_d$ over $V$.
Let $v$ be a point of $V$; let $\psi_v \from \orb P \to \widetilde{\orb M}_{0,1+3}$ the corresponding map; let $(\widetilde {\orb T}, \widetilde\sigma \sqcup \widetilde {\orb D}_v) \to \orb P$ the corresponding (1+3)-pointed family of rational curves; and let $f \from (S, \sigma \sqcup D_v) \to \P^1$ the family obtained by the blow-down construction as in \eqref{eqn:basic_bijection}.
Note that $D_v$ is the coarse space of $\widetilde{\orb D}_v$.

By \autoref{prop:recillas}, the points of $\orb Q_d$ over $v$ are in natural bijection with the norm-trivial two-torsion line bundles $\orb L$ on $\widetilde {\orb D}_v$.
Since $\coarse{\orb P}$ has genus $0$, a line bundle on $\orb P$ is trivial if and only if it has degree 0 and the automorphism groups at the orbifold points act trivially on its fibers.
Let $p_1, \dots, p_{6d}$ be the orbifold points of $\orb P$.
Note that $\widetilde{\orb D}_v$ also has the same number of orbifold points, say $q_1, \dots, q_{6d}$, with $q_i$ lying over $p_i$.
All the orbifold points, $\{p_i\}$ and $\{q_j\}$, have order 2.
Since $q_i$ is the only orbifold point over $p_i$, the action of $\Aut_{p_i}{\orb P}$ on the fiber of $\Norm \orb L$ is trivial if and only if the action of $\Aut_{q_i}{\orb D}_v$ on the fiber of $\orb L$ is trivial.
If this is the case for all $i$, then $\orb L$ is a pullback from the coarse space $D_v$.
Thus, norm-trivial two-torsion line bundles on $\orb{\widetilde D}_v$ are just pullbacks of two-torsion line bundles on $D_v$.
The component ${\orb Q}_d^0$ corresponds to the trivial line bundle.
The non-trivial ones split into two orbits because of the natural theta characteristic $\theta = f^*\O_{\P^1}(d-1)$ on $D_v$.

We can summarize the above discussion in the following sequence of bijections:
\begin{equation}\label{eqn:theta_bijection}
  \left\{\parbox{8em}{Points in $\orb Q_d$ over a general $v \in \orb T_d$}\right\}
  \leftrightarrow 
  \left\{\parbox{8em}{Two torsion line bundles on $D_v$}\right\}
  \stackrel{\otimes\theta}{\leftrightarrow} 
  \left\{\parbox{9em}{Theta characteristics on $D_v$}\right\}.
\end{equation}
\begin{proposition}
  Under the bijection in \eqref{eqn:theta_bijection}, the points of $\orb Q_d^{\rm even}$ correspond to even theta characteristics and the points of $\orb Q_d^{\rm odd}$ correspond to odd theta characteristics.
\end{proposition}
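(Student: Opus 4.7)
The plan is to compute the parity $h^0(D, \kappa) \bmod 2$ at a smooth generic point of each component and match it to the claim. By \autoref{prop:components}, both $\orb Q_d^{\rm even}$ and $\orb Q_d^{\rm odd}$ are irreducible with a dense open of smooth $(S, C)$, and the parity of a theta characteristic is deformation invariant on smooth curves; hence a single generic computation determines the parity throughout the component.

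First I unwind \eqref{eqn:theta_bijection}. For a tetragonal cover $\phi \from C \to \P^1$ with associated Recillas trigonal cover $\psi \from D \to \P^1$ and norm-trivial two-torsion line bundle $\mathcal L$ on $D$ (see \autoref{prop:recillas}), the resulting theta characteristic is $\kappa = \mathcal L \otimes \psi^* \O_{\P^1}(d-1)$, since the reference theta characteristic is $\theta = f^* \O_{\P^1}(d-1)$. From $\phi_* \O_C = \O_{\P^1} \oplus \psi_* \mathcal L$ and the projection formula applied twice,
\[
  h^0(C, \phi^* \O_{\P^1}(d-1)) = h^0(\P^1, \O(d-1)) + h^0(\P^1, \psi_* \mathcal L \otimes \O(d-1)) = d + h^0(D, \kappa),
\]
so it suffices to compute $h^0(C, \O_C((d-1)F))$ and compare its parity to $d$, where $F$ denotes the fiber class on $S$.

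Next I compute $h^0(C, \O_C((d-1)F))$ for a generic representative of each component using the restriction sequence
\[
  0 \to \O_S((d-1)F - C) \to \O_S((d-1)F) \to \O_C((d-1)F) \to 0.
\]
For $S = \F_0$ with $C$ of type $(4, d)$ (generic in $\orb Q_d^{\rm even}$), the twist $\O_S(-4, -1)$ has vanishing cohomology by K\"unneth, yielding $h^0(C, \O_C((d-1)F)) = d$ and $h^0(D, \kappa) = 0$: an even theta characteristic. For $S = \F_1$ with $C$ of class $4\sigma + (d+2)F$ (generic in $\orb Q_d^{\rm odd}$), the twist $\O_{\F_1}(-4\sigma - 3F)$ has $H^0 = 0$ and, by Serre duality with $K_{\F_1} = -2\sigma - 3F$, satisfies $H^1 \cong H^1(\F_1, \O(2\sigma))^* \cong \C$ (computed from $0 \to \O(\sigma) \to \O(2\sigma) \to \O_\sigma(-2) \to 0$), yielding $h^0(C, \O_C((d-1)F)) = d + 1$ and $h^0(D, \kappa) = 1$: an odd theta characteristic.

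The main conceptual point is identifying $\kappa = \mathcal L \otimes \psi^* \O_{\P^1}(d-1)$ as the theta characteristic under \eqref{eqn:theta_bijection}; once that is done, everything reduces to a straightforward cohomology computation on a Hirzebruch surface, and the parities drop out independently of $d$.
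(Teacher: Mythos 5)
Your proposal is correct and follows essentially the same route as the paper: both use the Recillas identity $f_*\O_C = \O_{\P^1}\oplus \psi_*\orb L$, twist by $\O_{\P^1}(d-1)$, and read off the parity as $h^0(C, f^*\O_{\P^1}(d-1)) - d$, computed for the generic $(S,C)$ on $\F_0$ and $\F_1$. The only difference is that the paper leaves the final cohomology count as ``easy to calculate,'' whereas you carry it out explicitly via the restriction sequence (correctly, up to noting $H^1(\F_1,\O((d-1)F))=0$ to pin down $h^0 = d+1$ exactly).
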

\begin{proof}
  Let $u \in \orb Q_d$ be a point over $v$.
  Let $f \from (S, C) \to \P^1$ be the corresponding 4-pointed curve on a Hirzebruch surface and $L$ the corresponding two-torsion line bundle on $D_v$.
  By \autoref{prop:recillas}, we get 
  \[\O_{\P^1} \oplus f_* L  = f_* \O_C.\]
  Tensoring by $\O_{\P^1}(d-1)$ gives
  \[ \O_{\P^1}(d-1) \oplus f_* \theta = f_* \O_C \otimes \O_{\P^1}(d-1).\]
  Thus the parity of $\theta$ is the parity of $h^0(C, f^*\O_{\P^1}(d-1)) - d$.
  It is easy to calculate that for $C$ on $\F_0$ of class $(4,d)$, this quantity is $0$ and for $C$ on $\F_1$ of class $(4\sigma + (d+2)F)$, this quantity is $1$.
\end{proof}

It will be useful to understand the theta characteristic $\theta$ on $D_v$ in terms of the map to $\widetilde{\orb M}_{0,1+3}$.
Let $(\orb T, \sigma \sqcup \orb D) \to \widetilde{\orb M}_{0,1+3}$ be the universal $(1+3)$-pointed curve.
The curve $\orb D$ has genus 0 and has two orbifold points, both of order 2, one over $0$ and one over $\infty$.
Let $\widetilde{\orb M}_{0,1+3} \to \widetilde{\orb M}_{0;1+3}'$ be the coarse space around $\infty$ and $\orb D \to \orb D'$ the coarse space around the orbifold point over $\infty$.
Then $\orb D'$ is a genus 0 orbifold curve with a unique orbifold point of order $2$.
Furthermore, $\orb D' \to \widetilde{\orb M'}_{0,1+3}$ is simply branched over $\infty$ and the line bundle $\O(1/2)$ on $\orb D'$ is the square root of the relative canonical bundle of $\orb D' \to \widetilde{\orb M'}_{0,1+3}$.
We have the fiber diagram
\begin{equation}\label{eqn:theta}
  \begin{tikzpicture}
    \node (D) {$D_v$};
    \draw (D)+(2,0) node (Dp) {$\orb D'$};
    \draw (D)+(0,-1) node (P) {$\P^1$};
    \draw (D)+(2,-1) node (M) {$\widetilde{\orb M}_{0,1+3}'$};
    \path[->] (D) edge node[above]{\scriptsize $\mu$} (Dp) edge (P) (Dp) edge (M) (P) edge (M);
  \end{tikzpicture}.
\end{equation}
Thus $\theta_{\rm rel} = \mu^*\O(1/2)$ is a natural relative theta characteristic on $D_v$.
With the unique theta characteristic $\O(-1)$ on $\P^1$, we get the theta characteristic $\theta = \theta_{\rm rel} \otimes \O(-1)$.

\section{Limits of plane quintics}\label{sec:quintics}
In this section, we fix $d = 3$ and write $\orb Q$ for $\orb Q_3$.
By \autoref{prop:components}, a general point of $\orb Q^{\rm odd}$ corresponds to a curve of class $(4\sigma + 5F)$ on $\F_1$.
Such a curve is the proper transform of a plane quintic under a blow up $\F_1 \to \P^2$ at a point on the quintic.
Therefore, the image in $\overline{\orb M}_6$ of $\orb Q^{\rm odd}$ is the closure of the locus $Q$ of plane quintic curves. 
The goal of this section is to describe the elements in the closure.
More specifically, we will determine the stable curves corresponding to the generic points of the irreducible components of $\Delta \cap \overline Q$, where $\Delta \subset \overline{\orb M}_6$ is the boundary divisor.

We have the sequence of morphisms
\[ \orb Q^{\rm odd} \xrightarrow{\alpha} \overline{\orb H}_{4,6} \xrightarrow{\beta} \overline{\orb M}_6.\]
Set $\widetilde Q = \alpha(\orb Q^{\rm odd})$.
We then get the sequence of surjections 
\[\orb Q^{\rm odd} \xrightarrow{\alpha} \widetilde Q \xrightarrow{\beta} \overline Q.\]
Let $U \subset \orb Q$ be the locus of non-degenerate maps.
Call the irreducible components of $\orb Q \setminus U$ the \emph{boundary divisors} of $\orb Q$.
\begin{proposition}\label{prop:prune_boundary}
  Let $B$ be an irreducible component of $\overline Q \cap \Delta$.
  Then $B$ is the image of a boundary divisor $\orb B$ of $\orb Q^{\rm odd}$ that satisfies
  \begin{enumerate}
  \item $\dim \alpha(\orb B) = \dim \orb B = 12$,
  \item $\dim \beta \circ \alpha (\orb B) = 11$, and
  \item $\beta \circ \alpha (\orb B) \subset \Delta$.  
  \end{enumerate}
\end{proposition}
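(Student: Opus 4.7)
The plan is to exploit the fact that $\orb Q^{\rm odd}$ is a smooth, proper, $13$-dimensional stack with simple normal crossings boundary $\orb Q^{\rm odd}\setminus U$ (by Proposition~\ref{prop:components} and Theorem~\ref{prop:technical_main}), so that its boundary divisors are each $12$-dimensional. Since $\dim \overline Q = 12$ (plane quintics modulo $\PGL_3$), the component $B$ of the codimension-one locus $\overline Q \cap \Delta$ has dimension $11$. Write $\Phi := \beta\circ\alpha$ for convenience.

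First I would observe that $\Phi^{-1}(\Delta) \subset \orb Q^{\rm odd} \setminus U$. Under the bijection \eqref{eqn:basic_bijection}, a point $u \in U$ corresponds to a nondegenerate $(S,C) \to \P^1$ with $S = \F_1$ and $C$ a smooth curve of class $4\sigma+5F$; contracting the directrix $\sigma$ produces a smooth plane quintic, so $\Phi(u) \in Q \subset \orb M_6$, away from $\Delta$. Hence every point of $\orb Q^{\rm odd}$ lying over $B$ must sit in the boundary.

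Next, I would produce $\orb B$ by a dimension chase. The morphism $\Phi \colon \orb Q^{\rm odd} \to \overline Q$ is surjective with $1$-dimensional generic fiber: over a general smooth quintic $C$, the fiber parameterizes blow-ups $\F_1 \to \P^2$ at points of $C$, a family isomorphic to $C$. Hence every irreducible component of $\Phi^{-1}(B)$ has dimension at least $12$. Let $\widetilde B$ be such a component dominating $B$. By the previous paragraph $\widetilde B$ is contained in some boundary divisor $\orb B$, which also has dimension $12$, and this forces $\widetilde B = \orb B$. Then $\Phi(\orb B) = B$, giving $\Phi(\orb B) \subset \Delta$ and $\dim \Phi(\orb B) = 11$.

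The subtlest assertion, and the step I expect to be the main obstacle, is $\dim \alpha(\orb B) = 12$, i.e., that $\alpha$ is generically finite on $\orb B$. Globally, $\alpha \colon \orb Q^{\rm odd} \to \widetilde Q$ is a surjection between stacks of the same dimension and is generically finite, since for a general degree-$4$ cover $f \colon C \to \P^1$ the ambient $\P^1$-bundle $S$ is canonically recovered from $f_*\O_C$ by a Casnati--Ekedahl-style construction. To conclude the same on $\orb B$, I need to rule out that $\orb B$ is exceptional for $\alpha$, i.e., collapses to an $11$-dimensional image. My plan is to argue that this reconstruction of the ambient scroll persists over the generic point of each boundary divisor: an admissible degree-$4$ cover on a nodal genus-$0$ base still determines its ambient $\P^1$-bundle up to finite ambiguity. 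The list of boundary types is finite, governed by the degenerations of $\widetilde{\stack M}_{0,4}$ classified by the Fulton--MacPherson construction of Proposition~\ref{prop:fm}, so this verification is concrete, but it is the one point of the argument that goes beyond a pure dimension count.
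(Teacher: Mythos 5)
Your reduction of the first three claims is close in spirit to the paper's, but your construction of $\orb B$ leaves the key assertion $\dim \alpha(\orb B) = 12$ genuinely unproved, and the route you propose for filling it does not work. You choose $\orb B$ as the boundary divisor containing a component of $(\beta\circ\alpha)^{-1}(B)$ dominating $B$; nothing then prevents $\alpha$ from contracting $\orb B$ to an $11$-dimensional locus that still surjects onto the $11$-dimensional $B$. Your plan to rule this out by showing $\alpha$ is generically finite on every (relevant) boundary divisor, via recovering the ambient scroll from the admissible cover, is not just unverified but based on a misconception: $\alpha$ does contract many boundary divisors of $\orb Q^{\rm odd}$, and the reason has nothing to do with reconstructing the $\P^1$-bundle. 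A point of $\orb Q^{\rm odd}$ records the cover $\orb P \to \orb Z$ of the degenerated stacky base together with the positions of all branch points, whereas the admissible cover only remembers (after $\br$) the stabilization of $(P, \phi^{-1}(\infty))$; components of $P$ over the tail with too few special points are contracted by stabilization and the moduli of the branch points on the tail is forgotten. This is precisely why \autoref{prop:not_contracted_1} takes $\dim\alpha(\orb B) = \dim\orb B$ as a hypothesis and uses it to cut the possible dual graphs down to a short list --- on most boundary divisors that hypothesis fails.

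The paper avoids the issue by pulling $B$ back in two steps rather than one, so that the equality $\dim\alpha(\orb B)=12$ holds by construction. First take $\widetilde B \subset \widetilde Q$ an irreducible component of $\beta^{-1}(B)$ surjecting onto $B$: since $\dim\widetilde Q = 13 > 12 = \dim\overline Q$, the fibers of $\beta$ are positive dimensional, so $\dim\widetilde B = 12$. Then take $\orb B$ a component of $\alpha^{-1}(\widetilde B)$ surjecting onto $\widetilde B$; the same dimension count gives $\dim\orb B = 12$, and $\alpha(\orb B) = \widetilde B$ has dimension $12$ with no generic-finiteness statement about boundary divisors required. Your observation that $(\beta\circ\alpha)^{-1}(\Delta)$ lies in $\orb Q^{\rm odd}\setminus U$ (so that a $12$-dimensional irreducible substack mapping into $\Delta$ is automatically a boundary divisor) is correct and is what completes the argument. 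A minor further quibble: your claim that \emph{every} irreducible component of $(\beta\circ\alpha)^{-1}(B)$ has dimension at least $12$ is too strong --- only components dominating $B$ are guaranteed this --- though you only use a dominating one.
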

\begin{proof}
  Note that $\dim {\orb Q}^{\rm odd} = \dim \widetilde Q = 13$ and $\dim \overline Q = 12$.
  Since $\Delta$ is a Cartier divisor, we have $\codim(B, \overline Q) = 1$.
  Let $\widetilde B \subset \widetilde Q$ be an irreducible component of $\beta^{-1}(\widetilde B)$ that surjects onto $B$.
  Then $\codim(\widetilde B, \widetilde Q) = 1$.
  Let $\orb B \subset \orb Q^{\rm odd}$ be an irreducible component of $\alpha^{-1}(\widetilde B)$ that surjects onto $\widetilde B$.
  Then $\orb B$ is the required boundary divisor of $\orb Q$.
\end{proof}

Recall that points of $\orb Q$ correspond to certain finite maps
$\phi \from \orb P \to \orb Z$, where $\orb Z \to \widetilde{\stack M}_{0,4}$ is a pointed degeneration.
Set $P = \coarse{\orb P}$ and $Z = \coarse{\orb Z}$. 
The map $P \to Z$ is an admissible cover with ramification $(2,2,\dots)$ over $0$, ramification $(3,3,\dots)$ over $1$, and ramification $(1,1,\dots)$ over $\infty$.
We encode the topological type of the admissible cover by its dual graph $(\Gamma_\phi \from \Gamma_P \to \Gamma_Z)$.
Here the graph $\Gamma_Z$ is the dual graph of $(Z, \{0,1,\infty\})$, the graph $\Gamma_P$ is the dual graph of $P$, and $\Gamma_\phi$ is a map that sends the vertices and edges of $\Gamma_P$ to the corresponding vertices and edges of $\Gamma_Z$.
We decorate each vertex of $\Gamma_P$ by the degree of $\phi$ on that component and each edge of $\Gamma_P$ by the local degree of $\phi$ at that node.
We indicate the main component of $Z$ by a doubled circle.
For the generic points of divisors, $Z$ has two components, the main component and a `tail'.
In this case, we will omit writing the vertices of $\Gamma_P$ corresponding to the `redundant components'---these are the components over the tail that are unramified except possibly over the node and the marked point.
These can be filled in uniquely.
\autoref{fig:adm_cover_graph} shows an example of an admissible cover and its dual graph, with and without the redundant components.
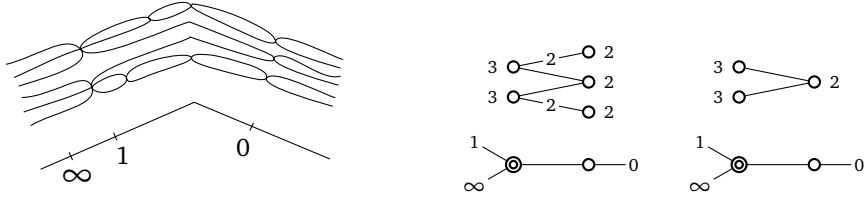
\begin{figure}
  \begin{center}
    \begin{tikzpicture}
      \begin{scope}[shift={(3.5,1.5)}]
        \node {

\begin{tikzpicture}[y=0.80pt, x=0.80pt, yscale=-1.000000, xscale=1.000000, inner sep=0pt, outer sep=0pt]
\begin{scope}[shift={(-158.95462,-314.71943)}]
  \begin{scope}[cm={{0.52175,0.0,0.0,0.52175,(75.91434,190.57269)}}]
    \path[draw=black] (190.7143,390.2193) -- (330.0000,329.5050) --
      (330.0000,329.5050) -- (452.8571,380.9336)(402.9090,272.3738) .. controls
      (400.2541,266.9565) and (395.5978,264.9814) .. (391.5665,262.4235) .. controls
      (372.9311,251.7681) and (351.8264,244.1348) .. (331.8588,240.7165) .. controls
      (330.1414,240.8912) and (327.0436,238.6002) .. (327.4606,242.8601) .. controls
      (331.4385,246.4634) and (336.1058,248.1605) .. (340.4957,250.5132) .. controls
      (355.1296,257.5313) and (369.8524,264.1830) .. (384.5489,270.9354) .. controls
      (389.9950,273.0990) and (395.4902,275.6890) .. (401.2190,275.7759) .. controls
      (402.6385,275.9822) and (404.6331,274.3710) .. (402.9090,272.3738) --
      cycle(344.9889,286.6155) .. controls (341.2628,286.4247) and
      (332.6261,284.9891) .. (329.1055,287.2303) .. controls (325.3377,288.6285) and
      (328.8808,292.4879) .. (330.8749,292.9208) .. controls (338.7857,296.9467) and
      (347.1860,298.5494) .. (355.3922,300.9939) .. controls (366.3280,303.8871) and
      (377.2654,306.8782) .. (388.3175,308.8387) .. controls (390.4850,309.1127) and
      (392.7080,309.0873) .. (394.8390,308.2607) .. controls (397.4043,304.3503) and
      (392.2985,301.4561) .. (390.2554,300.1641) .. controls (382.1759,295.4413) and
      (373.4083,293.9280) .. (364.9318,291.1675) .. controls (356.6894,288.8311) and
      (353.3469,288.0182) .. (344.9889,286.6155) -- cycle(326.0359,256.7599) ..
      controls (326.0359,256.7599) and (400.1989,285.1699) .. (401.1812,287.8808) ..
      controls (402.1636,290.5916) and (404.1658,293.8507) .. (390.8345,290.5260) ..
      controls (368.6565,284.9952) and (325.9030,270.2124) ..
      (325.9030,270.2124)(464.2890,290.2561) .. controls (465.4841,293.2782) and
      (413.0985,264.6714) .. (404.4805,272.5660) .. controls (395.8626,280.4606) and
      (462.3600,298.4788) .. (462.3600,298.4788)(462.5612,305.7630) .. controls
      (444.8892,311.2239) and (415.5271,281.4360) .. (402.9280,288.3818) .. controls
      (394.3101,296.2763) and (460.6322,313.9857) ..
      (460.6322,313.9857)(457.3834,324.8648) .. controls (458.5785,327.8869) and
      (404.6214,299.0879) .. (396.0035,306.9825) .. controls (387.3855,314.8771) and
      (455.4545,333.0875) .. (455.4545,333.0875)(268.6492,308.2292) .. controls
      (265.1845,294.1201) and (322.6802,282.2416) .. (327.0000,288.6096) .. controls
      (329.4258,292.1857) and (310.7104,297.2356) .. (294.1915,302.9904) .. controls
      (281.2939,307.4836) and (270.1683,314.4153) .. (268.6492,308.2292) --
      cycle(175.3245,338.3460) .. controls (188.2696,333.4713) and
      (217.9566,325.6445) .. (237.1704,315.8187) .. controls (236.1282,313.7280) and
      (266.0068,296.3065) .. (268.6492,308.2292) .. controls (269.6408,312.7029) and
      (258.8131,319.0980) .. (249.7404,320.3860) .. controls (242.6668,321.3902) and
      (236.8773,318.1690) .. (237.1704,315.8187) .. controls (228.9339,298.7363) and
      (329.1992,273.6094) .. (325.8944,270.0500)(181.7279,350.6398) .. controls
      (203.2316,337.7209) and (242.7066,332.8656) .. (237.1704,315.8187) .. controls
      (231.5570,298.5340) and (203.5281,324.0004) ..
      (173.6378,325.1956)(288.1692,252.7338) .. controls (290.6684,246.2344) and
      (320.1678,233.8974) .. (326.4138,241.0591) .. controls (326.6179,241.2930) and
      (326.7971,241.5478) .. (326.9497,241.8242) .. controls (327.0402,241.9880) and
      (326.9270,242.7673) .. (326.5809,243.1157) .. controls (322.6386,247.0839) and
      (292.6080,261.0515) .. (288.1692,252.7338) -- cycle(166.6987,307.0020) ..
      controls (179.4331,301.6950) and (205.5269,287.3969) .. (226.8315,281.1644) ..
      controls (239.3291,274.4456) and (302.2343,257.4455) .. (288.1692,252.7338) ..
      controls (277.8339,249.2716) and (223.4714,271.5133) .. (226.8315,281.1644) ..
      controls (236.8638,296.4514) and (315.2039,257.7056) ..
      (326.1785,256.6014)(159.3534,295.0806) .. controls (184.9525,289.3748) and
      (214.4554,266.9559) .. (226.7577,281.3275) .. controls (239.2316,295.8995) and
      (191.2485,308.2527) .. (170.6734,314.8668);
    \path[draw=black] (216.5017,374.8122) .. controls (217.3636,377.2539) and
      (218.6381,379.5193) .. (219.6522,381.8962);
    \path[draw=black] (256.8451,357.0008) .. controls (257.6635,359.4574) and
      (258.8975,361.7451) .. (259.8692,364.1396);
    \path[draw=black] (383.5257,348.1183) .. controls (382.6035,350.5379) and
      (381.2732,352.7710) .. (380.2004,355.1220);
    \path[fill=black] (223.8214,396.4018) node (text3040) {$\infty$};
    \path[fill=black] (265.2561,377.8749) node (text3040-7) {$1$};
    \path[fill=black] (374.3989,370.5891) node (text3040-3) {$0$};
  \end{scope}
\end{scope}

\end{tikzpicture}
 };
      \end{scope}
      \begin{scope}[shift={(8,0.5)}]
        \draw
        (0,0) node[curve, double] (P1) {}
        +(210:.6) node[int] (I) {$\infty$} edge (P1)
        +(150:.6) node[int] (1) {$1$} edge (P1)
        (1,0) node[curve] (P2) {}
        +(0:.6) node[int] {0} edge (P2);
      \end{scope}
      \begin{scope}[shift={(8,1.2)}]
        \draw
        (1,0) node[curve, label={0:2}] (C21) {}
        (1,0.4) node[curve, label={0:2}] (C22) {}
        (1,0.8) node[curve, label={0:2}] (C23) {}
        (0,0.2) node[curve, label={180:3}] (C11) {}
        (0,0.6) node[curve, label={180:3}] (C12) {};
        \draw
        (P1) edge (P2)
        (C11) edge node[int] {2} (C21)
        (C11) edge (C22)
        (C12) edge (C22)
        (C12) edge node[int] {2} (C23);
      \end{scope};
      \begin{scope}[shift={(11,0.5)}]
        \draw
        (0,0) node[curve, double] (P1) {}
        +(210:.6) node[int] (I) {$\infty$} edge (P1)
        +(150:.6) node[int] (1) {$1$} edge (P1)
        (1,0) node[curve] (P2) {}
        +(0:.6) node[int] (0) {$0$} edge (P2);
      \end{scope}
      \begin{scope}[shift={(11,1.2)}]
        \draw
        (1,.4) node[curve, label={0:2}] (C22) {}
        (0,.6) node[curve, label={180:3}] (C11) {}
        (0,.2) node[curve, label={180:3}] (C12) {};
        \draw
        (P1) edge (P2)
        (C11) edge (C22)
        (C12) edge (C22);
      \end{scope};
    \end{tikzpicture}
  \end{center}
  \caption{An admissible cover, and its dual graph with and without the redundant components}
  \label{fig:adm_cover_graph}
\end{figure}

\begin{proposition}\label{prop:not_contracted_1}
  Let $\orb B \subset \orb Q$ be a boundary divisor such that $\dim \alpha(\orb B) = \dim \orb B$ and $\alpha(\orb B) \subset \overline {\orb H}_{4,6} \setminus {\orb H}_{4,6}$.
  Then the generic point of $\orb B$ has one of the following dual graphs (drawn without the redundant components).
  \begin{multicols}{3}
    \begin{enumerate}[label=\arabic*., ref=\arabic*]
    \item
      \begin{tikzpicture}[baseline=(C2)]
        \draw
        (0,0) node[curve, double] (P1) {}
        +(150:.6) node[int]  {$\infty$} edge (P1)
        +(180:.6) node[int]  {$1$}  edge (P1)
        +(210:.6) node[int]  {$0$}  edge (P1)
        (1,0) node[curve] (P2) {};
        \begin{scope}[shift={(0,.8)}]
          \draw
          (0,0) node[curve, label={180:6}] (C1) {}
          (0,.4) node[curve, label={180:12}] (C2) {}
          (1,.2) node[curve, label={0:2}] (E) {};
        \end{scope};
        \draw (C1) edge (E)  (C2) edge (E)      (P1) edge (P2)       ;
      \end{tikzpicture}
    \item
      \begin{tikzpicture}[baseline=(C2)]
        \draw
        (0,0) node[curve, double] (P1) {}
        +(150:.6) node[int]  {$\infty$} edge (P1)
        +(210:.6) node[int]  {$1$}  edge (P1)
        (1,0) node[curve] (P2) {}
        +(0:.6) node[int]  {$0$}  edge (P2);
        \begin{scope}[shift={(0,.8)}]
          \draw
          (0,0) node[curve, label={180:9}] (C1) {}
          (0,.4) node[curve, label={180:9}] (C2) {}
          (1,.2) node[curve, label={0:2}] (E) {};
        \end{scope};
        \draw (C1) edge (E)  (C2) edge (E)      (P1) edge (P2)       ;
      \end{tikzpicture}
    \item
      \begin{tikzpicture}[baseline=(C2)]
        \draw
        (0,0) node[curve, double] (P1) {}
        +(150:.6) node[int]  {$\infty$} edge (P1)
        +(210:.6) node[int]  {$1$}  edge (P1)
        (1,0) node[curve] (P2) {}
        +(0:.6) node[int]  {$0$}  edge (P2);
        \begin{scope}[shift={(0,.8)}]
          \draw
          (0,0) node[curve, label={180:3}] (C1) {}
          (0,.4) node[curve, label={180:15}] (C2) {}
          (1,.2) node[curve, label={0:2}] (E) {};
        \end{scope};
        \draw (C1) edge (E)  (C2) edge (E)      (P1) edge (P2)       ;
      \end{tikzpicture}
    \item 
      \begin{tikzpicture}[baseline=(C2)]
        \draw
        (0,0) node[curve, double] (P1) {}
        +(150:.6) node[int]  {$\infty$} edge (P1)
        +(210:.6) node[int]  {$0$}  edge (P1)
        (1,0) node[curve] (P2) {}
        +(0:.6) node[int]  {$1$}  edge (P2);
        \begin{scope}[shift={(0,.8)}]
          \draw
          (0,0) node[curve, label={180:8}] (C1) {}
          (0,.4) node[curve, label={180:10}] (C2) {}
          (1,.2) node[curve, label={0:3}] (E) {};
        \end{scope};
        \draw (C1) edge (E)  (C2) edge node[int] {2} (E)  (P1) edge (P2)       ;
      \end{tikzpicture}
    \item
      \begin{tikzpicture}[baseline=(C2)]
        \draw
        (0,0) node[curve, double] (P1) {}
        +(150:.6) node[int]  {$\infty$} edge (P1)
        +(210:.6) node[int]  {$0$}  edge (P1)
        (1,0) node[curve] (P2) {}
        +(0:.6) node[int]  {$1$}  edge (P2);
        \begin{scope}[shift={(0,.8)}]
          \draw
          (0,0) node[curve, label={180:2}] (C1) {}
          (0,.4) node[curve, label={180:16}] (C2) {}
          (1,.2) node[curve, label={0:3}] (E) {};
        \end{scope};
        \draw (C1) edge (E)  (C2) edge node[int] {2} (E)  (P1) edge (P2)       ;
      \end{tikzpicture}
    \item 
      \begin{tikzpicture}[baseline=(C2)]
        \draw
        (0,0) node[curve, double] (P1) {}
        +(150:.6) node[int]  {$1$} edge (P1)
        +(210:.6) node[int]  {$0$}  edge (P1)
        (1,0) node[curve] (P2) {}
        +(0:.6) node[int]  {$\infty$}  edge (P2);
        \begin{scope}[shift={(0,.8)}]
          \draw
          (0,0) node[curve, label={180:18}] (C1) {}
          (1,0) node[curve, label={0:$i$}] (E) {};
        \end{scope};
        \draw (C1) edge node[int] {$i$} (E)  (P1) edge (P2);
        \draw (.5,-.6) node[int] {$1 \leq i \leq 14$};
      \end{tikzpicture}
      
    \item 
      \begin{tikzpicture}[baseline=(C2)]
        \draw
        (0,0) node[curve, double] (P1) {}
        +(150:.6) node[int]  {$1$} edge (P1)
        +(210:.6) node[int]  {$0$}  edge (P1)
        (1,0) node[curve] (P2) {}
        +(0:.6) node[int]  {$\infty$}  edge (P2);
        \begin{scope}[shift={(0,.8)}]
          \draw
          (0,0) node[curve, label={180:6}] (C1) {}
          (0,.4) node[curve, label={180:12}] (C2) {}
          (1,.2) node[curve, label={0:$i+j$}] (E) {};
        \end{scope};
        \draw
        (C1) edge node[int] {$j$} (E)
        (C2) edge node[int] {$i$} (E)
        (P1) edge (P2);
        \draw (.5,-.6) node[int] {$1 \leq i \leq 9, 1 \leq j \leq 4$};
      \end{tikzpicture}

    \item 
      \begin{tikzpicture}[baseline=(C2)]
        \draw
        (0,0) node[curve, double] (P1) {}
        +(150:.6) node[int]  {$1$} edge (P1)
        +(210:.6) node[int]  {$0$}  edge (P1)
        (1,0) node[curve] (P2) {}
        +(0:.6) node[int]  {$\infty$}  edge (P2);
        \begin{scope}[shift={(0,.8)}]
          \draw
          (0, 0) node[curve, label={180:6}] (C1) {}
          (0,.4) node[curve, label={180:6}] (C2) {}
          (0,.8) node[curve, label={180:6}] (C3) {}
          (1,.4) node[curve, label={0:$i+j+k$}] (E) {};
        \end{scope};
        \draw
        (C1) edge node[int] {$k$} (E)
        (C2) edge node[int] {$j$} (E)
        (C3) edge node[int] {$i$} (E)
        (P1) edge (P2);
        \draw (.5,-.6) node[int] {$1 \leq i,j,k \leq 4$};
      \end{tikzpicture}
    \end{enumerate}
  \end{multicols}
\end{proposition}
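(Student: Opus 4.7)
The plan is to enumerate combinatorially all boundary divisors of $\orb Q$, and then to prune the list using the two hypotheses to obtain the eight families displayed. Recall that a point of $\orb Q$ is an admissible cover $\phi \from \orb P \to \orb Z$, where $\orb Z$ is a pointed stable degeneration of $\widetilde{\stack M}_{0,4}$. At a boundary divisor, $\orb Z$ has exactly two components: a main component, and a single tail obtained from a Fulton--MacPherson bubble at some point. The three special points $\{0,1,\infty\}$ distribute between main and tail in one of a handful of combinatorial ways, and within each way, the dual graph of $\orb P$ and the local ramifications at the node can be enumerated using Riemann--Hurwitz.

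For each candidate I would first enumerate the dual graphs $\Gamma_P$ subject to the constraints: total degree $18$ over each vertex of $\Gamma_Z$; ramification profile $(2,2,\dots)$ over $0$, $(3,3,\dots)$ over $1$, trivial over $\infty$; local degrees on either side of the node matching; $\coarse{\orb P}$ connected of arithmetic genus $0$; and the bubble genuinely non-trivial (so that $\orb B$ is an honest boundary divisor rather than a family of smooth covers). Riemann--Hurwitz applied to each irreducible component of $\orb P$ constrains the genus contributions, and together with the ramification profile forces the degrees and local ramifications. The outcome is a finite list of candidate dual graphs, each defining an irreducible boundary divisor of the expected dimension $12 = 5d-3$.

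With this list in hand, I would apply the two hypotheses. Using the blow-down construction of \autoref{eqn:basic_bijection}, each boundary divisor corresponds to a family of admissible tetragonal covers $f \from (S,C) \to \P^1$; the dimension condition $\dim \alpha(\orb B) = \dim \orb B$ rules out the degenerations on which the induced $C \to \P^1$ is constant up to isomorphism, i.e.\ those in which the tail contributes only `trigonal' moduli that are visible in $\orb T_d$ but not in $\overline{\orb H}_{4,6}$. The boundary condition $\alpha(\orb B) \subset \overline{\orb H}_{4,6} \setminus \orb H_{4,6}$ requires the tetragonal curve $C$ at the generic point of $\orb B$ to be nodal or reducible after blow-down, excluding the degenerations whose blow-down reproduces a smooth quartic cover of a smooth $\P^1$. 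Combining the two filters should leave precisely the eight families in the statement.

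The main obstacle is the bookkeeping inside each case. One must carefully separate the `redundant' components of $\orb P$ over the tail (those that are unramified except over the node and, possibly, a single marked point) from the `essential' ones drawn in the dual graph, and then correctly track how the blow-down construction converts the displayed data into tetragonal data on a Hirzebruch surface. The parametric ranges in families 6, 7, and 8 come from the allowed local degrees at the node, and the stated bounds (such as $1 \le i \le 14$ in case 6) arise from the requirement that the cover be realizable by a genuine collision of simple branch points with $\infty$ from the schematic locus; verifying these bounds case by case, rather than merely proposing them as loose upper limits, will likely be the most delicate step of the argument.
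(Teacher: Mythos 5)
Your high-level plan---restrict to two-component degenerations $\orb Z$, enumerate dual graphs, and prune with the two hypotheses---is the same shape as the paper's argument, but the step that actually does the work is missing, and your description of how $\dim \alpha(\orb B) = \dim \orb B$ gets used is not correct. Nothing in Riemann--Hurwitz, connectedness, or stability excludes, say, a tail component of $P$ with $s \geq 3$ points over the node, a tail component of large degree, or several non-redundant components over the tail; all of these give honest $12$-dimensional boundary divisors of $\orb Q$, so your ``finite list'' in step 1 is far larger than the eight families, and your two filters, as stated, give no effective way to cut it down. The paper's mechanism is to compose $\alpha$ with the finite branch morphism $\br \from \overline{\orb H}_{4,6} \to \widetilde{\orb M}_{0,18}$, so the dimension hypothesis becomes generic finiteness of $\phi \mapsto$ (stabilization of $(P, \phi^{-1}(\infty))$) on $\orb B$, and then to compare, for each component $E$ of $P$ over the tail $T$ with $s$ points over the node $t$, its contribution to $\dim \orb B$---the number $b$ of branch points of $E \to T$ away from $t$, minus $\dim \Aut(T,t) = 2$---with its contribution to the image, which is at most $\max(0, s-3)$ because the tail components of $P$ carry none of the $18$ marked points. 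The resulting inequality $b - 2 \leq \max(0, s-3)$, with $b = e+s-2$, $e/2+s-1$, $e/3+s-1$ according to which of $\{0,1\}$ sits on the tail, forces $s=2$ and $e = 2$ or $3$ in cases (1)--(5), and the same finiteness (via the positive-dimensional automorphisms of $(T,t)$ fixing the marked point) rules out any second non-redundant tail component; only after this do the ramification profiles over $0$, $1$, $\infty$, $t$ pin down the degree partitions over the main component ($6{+}12$, $9{+}9$, $3{+}15$, $8{+}10$, $2{+}16$, and divisibility by $6$ in cases (6)--(8)) and the ranges of $i$, $j$, $k$.

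Concretely, two points need repair. First, your claim that the dimension condition excludes loci where ``the tail contributes only trigonal moduli visible in $\orb T_d$ but not in $\overline{\orb H}_{4,6}$'' is a misreading: $\orb T_d$ and the Recillas correspondence play no role here; what $\alpha$ forgets is the positions of the branch points $\sigma_j$ of $\phi$ lying on the tail of $\orb Z$ (more generally, the moduli of $\phi$ over the tail beyond the node configuration on $P$), and the quantitative comparison above is the entire content of the proposition. Second, you explicitly defer ``verifying these bounds case by case,'' but those bounds---$s=2$, $e\in\{2,3\}$, a single non-redundant tail component, and ranges such as $1 \leq i \leq 14$ (which comes from Riemann--Hurwitz on the degree-$18$ component over the main curve, i.e.\ from requiring a nonnegative number of simple branch points there)---are exactly what is to be proved. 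Your use of the second hypothesis (that $\alpha(\orb B)$ lies in the boundary, so the stabilized $18$-pointed base is nodal, forcing in cases (1)--(5) a tail component of $P$ with at least two points over the node) is essentially right, but by itself it does not yield the classification.
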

\begin{proof}
  Consider the finite map $\br \from \overline{\orb H}_{4,6} \to \widetilde{\orb M}_{0,18}$ that sends a branched cover to the branch points.
  Under this map, the preimage of $\orb M_{0,18}$ is $\orb H_{4,6}$.
  So it suffices to prove the statement with $\gamma = \br \circ \alpha$ instead of $\alpha$ and $\widetilde{\orb M}_{0,18}$ instead of $\overline{\orb H}_{4,6}$.
  Notice that $\gamma$ sends $(\phi \from \orb P \to \orb Z)$ to the stabilization of $(P, \phi^{-1}(\infty))$.

  Assume that $\orb B \subset \orb Q$ is a boundary divisor satisfying the two conditions.
  Let $\phi \from \orb P \to \orb Z$ be a generic point of $\orb B$.
  The dual graph of $Z$ has the following possibilities:
  \begin{center}
    \begin{inparaenum}[(I)]
    \item
      \begin{tikzpicture}[baseline=(P1)]
        \begin{scope}[shift={(0,0)}]
          \draw
          (0,0) node[curve, double] (P1) {}
          +(150:.6) node[int]  {$\infty$} edge (P1)
          +(180:.6) node[int]  {$1$}  edge (P1)
          +(210:.6) node[int]  {$0$}  edge (P1)
          (1,0) node[curve] (P2) {} edge (P1);
        \end{scope};
      \end{tikzpicture}
    \item
      \begin{tikzpicture}[baseline=(P1)]
        \begin{scope}[shift={(2,0)}]
          \draw
          (0,0) node[curve, double] (P1) {}
          +(150:.6) node[int]  {$\infty$} edge (P1)
          +(210:.6) node[int]  {$1$}  edge (P1)
          (1,0) node[curve] (P2) {} edge (P1)
          +(0:.6) node[int]  {$0$}  edge (P2);
        \end{scope};
      \end{tikzpicture}
    \item
      \begin{tikzpicture}[baseline=(P1)]
        \begin{scope}[shift={(4,0)}]
          \draw
          (0,0) node[curve, double] (P1) {}
          +(150:.6) node[int]  {$\infty$} edge (P1)
          +(210:.6) node[int]  {$0$}  edge (P1)
          (1,0) node[curve] (P2) {} edge (P1)
          +(0:.6) node[int]  {$1$}  edge (P2);
        \end{scope};
      \end{tikzpicture}
    \item
      \begin{tikzpicture}[baseline=(P1)]
        \begin{scope}[shift={(6,0)}]
          \draw
          (0,0) node[curve, double] (P1) {}
          +(150:.6) node[int]  {$1$} edge (P1)
          +(210:.6) node[int]  {$0$}  edge (P1)
          (1,0) node[curve] (P2) {} edge (P1)
          +(0:.6) node[int]  {$\infty$}  edge (P2);
        \end{scope};  
      \end{tikzpicture}
    \end{inparaenum}.
  \end{center}

  Let $M \subset Z$ be the main component, $T \subset Z$ the tail and set $t = M \cap T$.

  Suppose $Z$ has the form (I), (II), or (III).
  Let $E \subset P$ be a component over $T$ that has at $s$ points over $t$ where $s \geq 2$.
  Since $\gamma(\phi)$ does not lie in ${\orb M}_{0,18}$, such a component must exist.
  The contribution of $E$ towards the moduli of $\gamma(\phi)$ is due to $(E, \phi^{-1}(t))$, whose dimension is bounded above by $\max(0, s-3)$.
  The contribution of $E$ towards the moduli of $\phi$ is due to the branch points of $E \to T$.
  Let $e$ be the degree and $b$ the number of branch points of $E \to T$ away from $t$ (counted without multiplicity).
  Then $b$ equals $e+s-2$ in case (I), $e/2+s-1$ in case (II), and $e/3+s-1$ in case (III).
  Since $\gamma$ is generically finite on $\orb B$, we must have $b - \dim \Aut(T,t) = b-2 \leq \max(0,s-3)$.
  The last inequality implies that $(s,e) = (2,2)$ in cases (I) and (II), and $(s,e) = (2,3)$ in case (III).
  We now show that all other components of $P$ over $T$ are redundant.
  Suppose $E' \subset P$ is a non-redundant component over $T$ different from $E$.
  This means that $E' \to T$ has a branch point away from $t$ and the marked point (which is present only in cases (II) and (II)). 
  Composing $E' \to T$ with an automorphism of $T$ that fixes $t$ and the marked point (if any) gives another $\phi$ with the same $\gamma(\phi)$.
  Since there is a positive dimensional choice of such automorphisms and $\alpha$ is generically finite on $\orb B$, such $E'$ cannot exist.
  We now turn to the picture of $P$ over $M$.
  Since $s = 2$, the curve $P$ has two components over $M$.
  We also know the ramification profile over $0$, $1$, $\infty$, and $t$.
  This information restricts the degrees of the two components modulo 6: in case (I), they must both be $0 \pmod 6$; in case (II), they must both be $3 \pmod 6$; and in case (III), they must be $4$ and $2 \pmod 6$.
  Taking these possibilities gives the pictures (1)--(5).
  
  Suppose $Z$ has the form (IV).
  By the same argument as above, $P$ can have at most one non-redundant component over $T$.
  On the other side, we see from the ramification profile over $0$ and $1$ that the components of $P$ over $M$ have degree divisible by 6.
  We get the three possibilities (6), (7), or (8) corresponding to whether $P$ has 1, 2, or 3 components over $M$.
\end{proof}

The next step is to identify the images in $\overline{\orb M}_6$ of the boundary divisors of the form listed in \autoref{prop:not_contracted_1}.
Recall that the map $\orb Q \to \overline{\orb M}_6$ factors through the stabilization map $\orb Q \to \Maps(\widetilde{\stack M}_{0,4})$, where $\Maps(\widetilde{\stack M}_{0,4})$ is the Abramovich--Vistoli space of twisted stable maps.
The \emph{type} of a divisor refers to the dual graph of its generic point as enumerated in \autoref{prop:not_contracted_1}.
The flavor of the analysis in cases (1)--(5) versus cases (6)--(8) is quite different.

\subsection{Divisors of type (1)--(5)}\label{sec:1-5}
\begin{proposition}\label{thm:1-5}
  There are $5$ irreducible components of $\overline Q \cap \Delta$ which are the images of the divisors of $\orb Q^{\rm odd}$ of type (1)--(5). 
  Their generic points correspond to one of the following stable curves:
  \begin{itemize}
  \item With the dual graph   
    \tikz[baseline={(0,-.1)}]{
      \draw 
      (0,0) node (X) [curve] {}
      (X) edge [loop right] (X);
    }
    \begin{enumerate}
    \item
      \label{15:1}
      A nodal plane quintic.
    \end{enumerate}

  \item With the dual graph 
    \tikz[baseline={(0,-.1)}]{
      \draw 
      (0,0) node (X) [curve, label={180:$X$}] {}
      (1,0) node (Y) [curve, label={0:$Y$}] {}
      (X) edge node[ int] {$p$} (Y);
    }
    \begin{enumerate}[resume]
    \item
      \label{15:2}
      $X$ hyperelliptic of genus 3, $Y$ a plane quartic, and $p \in Y$ a hyperflex ($K_Y = 4p$).
    \end{enumerate}

  \item With the dual graph
    \tikz[baseline={(0,-.1)}]{
      \draw 
      (0,0) node (X) [curve, label={180:$X$}] {}
      (1,0) node (Y) [curve, label={0:$Y$}] {}
      (X) edge[bend left=45] node[ int] {$p$} (Y)
      (X) edge[bend right=45] node[ int] {$q$} (Y);
    }
    \begin{enumerate}[resume]
    \item
      \label{15:3}
      $X$ Maroni special of genus 4, $Y$ of genus 1, and $p, q \in X$ in a fiber of the degree 3 map $X \to \P^1$.
    \item
      \label{15:4}
      $X$ hyperelliptic of genus 3, $Y$ of genus 2, and $p \in Y$ a Weierstrass point.
    \end{enumerate}

  \item With the dual graph
    \tikz[baseline={(0,-.1)}]{
      \draw
      (0,0) node (X) [curve, label={180:$X$}] {}
      (1,0) node (Y) [curve, label={0:$Y$}] {}
      (X) edge [bend right=45] (Y)
      (X) edge  (Y)
      (X) edge [bend left=45] (Y);
    }
    \begin{enumerate}[resume]
    \item
      \label{15:5}
      $X$ hyperelliptic of genus 3, and $Y$ of genus 1.
    \end{enumerate}
  \end{itemize}
\end{proposition}
Recall that a \emph{Maroni special} curve of genus 4 is a curve that lies on a singular quadric in its canonical embedding in $\P^3$.

The rest of this section is devoted to the proof of \autoref{thm:1-5}.

The map $\orb Q \to \overline{\orb M}_6$ factors via the space $\Maps(\widetilde{\stack M}_{0,4})$ of twisted stable maps.
Let $(\widetilde\phi \from \widetilde P \to \orb Z, \orb Z \to \widetilde{\stack M}_{0,4})$ correspond to a generic point of type (1)--(5).
Under the morphism to $\Maps(\widetilde{\stack M}_{0,4})$, all the components of $\widetilde{\orb P}$ over the tail of $\orb Z$ are contracted.
The resulting twisted stable map $\phi \from \orb P \to \widetilde{\stack M}_{0,4}$ has the following form: $\orb P$ is a twisted curve with two components joined at one node; $\phi$ maps the node to a general point in case (1), to $0$ in cases (2) and (3), and to $1$ in cases (4) and (5).
In all the cases, $\phi$ is \'etale over $\infty$.
Let $(\widetilde S, \widetilde C) \to \orb P$ be the pullback of the universal family of 4-pointed rational curves.
Let $\orb P \to P$ be the coarse space at the 18 points $\phi^{-1}(\infty)$ and let $f \from (S, C) \to P$ be the family obtained from $(\widetilde S, \widetilde C)$ by the blow-down construction as in \eqref{eqn:basic_bijection} on page~\pageref{eqn:basic_bijection}.
Then $S \to P$ is a $\P^1$ bundle and $C \to P$ is simply branched over 18 smooth points.

Every $\P^1$-bundle over $P$ is the projectivization of a vector bundle (see, for example, \cite{pom:13}).
It is easy to check that vector bundles on $P$ split as direct sums of line bundles and line bundles on $P$ have integral degree.
Therefore, $S = \P V$ for some vector bundle $V$ on $P$ of rank two.
The degree of $V$ (modulo 2) is well-defined and determines whether $(S, C)$ comes from $\orb Q^{\rm odd}$ or $\orb Q^{\rm even}$.
The normalization of $P$ is the disjoint union of two orbicurves $P_1$ and $P_2$, both isomorphic to $\P^1(\sqrt[r]0)$.
The number $r$ is the order of the orbinode of $P$.
Since $\phi \from P \to \widetilde{\stack M}_{0,4}$ is representable, the possible values for $r$ are $1$ and $2$ in case (1), $2$ and $4$ in cases (2) and (3), and $3$ in cases (4) and (5).
Set $V_i = V |_{P_i}$ and $C_i = f^{-1}(P_i) \subset \P V_i$.
The number of branch points of $C_i \to P_i$ is $\deg \phi|_{P_i}$ and $C_i \to P_i$ is \'etale over $0$.
Let $[C_i] = 4 \sigma_i + m_i F$, where $\sigma_i \subset \P V_i$ is the class of the directrix. 
Using the description of curves in $\P^1$-bundles over $\P^1(\sqrt[r]0)$ from \autoref{sec:orbiscrolls} (\autoref{prop:a bound} and \autoref{prop:orbiscroll_4}), we can list the possibilities for $V_i$ and $m_i$.
These are enumerated in \autoref{table:1to5}.
\begin{table}
  \centering
  \rowcolors{2}{gray!15}{white}
  \begin{tabular}{lllllllll}
    \hline
    \rowcolor{gray!25}
    Number & Type & $r$ &$V$ & $m_1$ & $m_2$ & $g(C_1)$ & $g(C_2)$\\
    \hline
    1 & 1 & 1 & $(0,0), (0,1)$ & 2 & 3 & $3$ & $0^*$ \\
    2 & 1 & 1 & $(0,1), (0,0)$ & 4 & 1 & $3$ & $0$ \\
    3 & 1 & 1 & $(0,2), (0,-1)$ & 6 & 3 & $3^*$ & $0^*$ \\
    4 & 1 & 2 & $(0,1/2), (0,1/2)$ & 3 & 2 & $4$ & $1$ \\
    5 & 2 & 2 & $(0,1/2), (0,-3/2)$ & 5/2 & 9/2 & $2$ & $2^*$ \\
    6 & 2 & 2 & $(0,1/2), (0,1/2)$ & 5/2 & 5/2 & $2$ & $2$ \\
    7 & 2 & 4 & $(0,1/4), (0,3/4)$ & 2 & 3 & $3$ & $3$ \\
    8 & 3 & 2 & $(0,1/2), (0,1/2)$ & 7/2 & 3/2 & $5$ & $-1^*$ \\
    9 & 3 & 4 & $(0,3/4), (0,1/4)$ & 4 & 1 & $6$ & $0$ \\
    10 & 3 & 4 & $(0,5/4), (0,-1/4)$ & 5 & 1 & $6$ & $0$ \\
    11 & 4 & 3 & $(0,1/3), (0,-4/3)$ & 7/3 & 4 & $3$ & $2^*$ \\
    12 & 4 & 3 & $(0,1/3), (0,2/3)$ & 7/3 & 8/3 & $3$ & $2$ \\
    13 & 4 & 3 & $(0,2/3), (0,1/3)$ & 3 & 2 & $3$ & $2$ \\
    14 & 4 & 3 & $(0,5/3), (0,-2/3)$ & 5 & 8/3 & $3^*$ & $2$ \\
    15 & 5 & 3 & $(0,2/3), (0,1/3)$ & 4 & 1 & $6$ & $-1^*$ \\
    16 & 5 & 3 & $(0,4/3), (0,-1/3)$ & 16/3 & 1 & $6$ & $-1^*$ \\
    \hline
  \end{tabular}
  \caption{Possibilities for the divisors of type (1)--(5)}
  \label{table:1to5}
\end{table}
An asterisk in front of the (arithmetic) genus means that the curve is disconnected.
In these disconnected cases, it is the disjoint union $\sigma \sqcup D$, where $D$ is in the linear system $3 \sigma + m_i F$.
The notation $(0,a_1),(0,a_2)$ represents the vector bundle $\O \oplus L$, where $L$ is the line bundle on $P$ whose restriction to $P_i$ is $\O(a_i)$.

We must identify in classical terms (as in \autoref{thm:1-5}) the curves $C_i$ appearing in \autoref{table:1to5}.
Let $C$ be a general curve in the linear system $4 \sigma + mF$ on $\F_a$ for a fractional $a$.
Let $X = \coarse{\F_a}$ and let $\hat X \to X$ be the minimal resolution of singularities.
Denote also by $C$ the proper transform of $C \subset X$ in $\hat X$.
From \autoref{prop:orbisingularities}, we can explicitly describe the pair $(\hat X, C)$.
By successively contracting exceptional curves on $\hat X$, we then transform $(\hat X, C)$ into a pair where the surface is a minimal rational surface.
We describe these modifications diagrammatically using the dual graph of the curves involved, namely  the components of the fiber of $\hat X \to \P^1$ over $0$, and the proper transforms in $\hat X$ of the directrix $\sigma$ and the original curve $C$.
We draw the components of $\hat X \to \P^1$ over $0$ in the top row, and $\sigma$ and $C$ in the bottom row.
We label a vertex by the self-intersection of the corresponding curve and an edge by the intersection multiplicity of the corresponding intersection.
We represent coincident intersections by a 2-cell.
The edges emanating from $C$ are in the same order before and after.

We can read-off the classical descriptions in \autoref{thm:1-5} from the resulting diagrams.
For example, diagram~\ref{F25/2} implies that a curve of type $4 \sigma + (5/2)F$ on $\F_{1/2}$ is of genus 2; it has three points on the fiber over $0$, namely $\sigma(0)$ (the leftmost edge), $\tau(0)$ (the rightmost edge), and $x$ (the middle edge), of which $\sigma(0)$ and $x$ are hyperelliptic conjugates.
Likewise, diagram~\ref{F23} implies that a curve of type $4 \sigma + 3F$ on $\F_{1/2}$ is Maroni special of genus 4 and its two points over $0$ lie on a fiber of the unique map $C \to \P^1$ of degree 3.
We leave the remaining such interpretations to the reader.

\label{resolution_contraction_diagrams_1}
\begin{multicols}{2}
  \begin{enumerate}[series=rc, label=\arabic*., ref=\arabic*]
  \item $4 \sigma + 2F$ on $\F_{1/2}$:
    \begin{equation*}\label{F22}
      \begin{tikzpicture}[xscale=.75, yscale=.6, baseline={(0,-.5)}]
        \draw 
        (1,-1) node[curve, label={-90:$\sigma$}, label={180:$-1$}] (S) {}
        (1,0) node[curve, label=$-2$] (E1) {}
        (2,0) node[curve, label=$-1$] (F) {}
        (3,0) node[curve, label=$-2$] (E2) {}
        (2,-1) node[curve, label={-90:$C$}] (C) {}
        (S) edge (E1) (E1) edge (F) (F) edge (E2)
        (C) edge[bend right=30] (F)
        (C) edge[bend left=30] (F)
        ;
      \end{tikzpicture}
      \leadsto
      \begin{tikzpicture}[xscale=.75, yscale=.6, baseline={(0,-.5)}]
        \draw 
        (2,0) node[curve, label=$0$] (F) {}
        (3,0) node[curve, label=$-2$] (E2) {}
        (2,-1) node[curve, label={-90:$C$}] (C) {}
        (F) edge (E2)
        (C) edge[bend right=30] (F)
        (C) edge[bend left=30] (F)
        ;
      \end{tikzpicture}
    \end{equation*}
  \item $4 \sigma + (5/2) F$ on $\F_{1/2}$:
    \begin{equation*}\label{F25/2}
      \begin{tikzpicture}[xscale=.75, yscale=.6, baseline={(0,-.5)}]
        \draw 
        (1,-1) node[curve, label={-90:$\sigma$}, label={180:$-1$}] (S) {}
        (1,0) node[curve, label=$-2$] (E1) {}
        (2,0) node[curve, label=$-1$] (F) {}
        (3,0) node[curve, label=$-2$] (E2) {}
        (2,-1) node[curve, label={-90:$C$}] (C) {}
        (E1) edge (F) (F) edge (E2)
        (S) edge (E1)  
        (C) edge (E1) (C) edge (F) (C) edge (E2)
        ;
      \end{tikzpicture}
      \leadsto
      \begin{tikzpicture}[xscale=.75, yscale=.6, baseline={(0,-.5)}]
        \draw 
        (2,0) node[curve, label=$0$] (F) {}
        (3,0) node[curve, label=$-2$] (E2) {}
        (2,-1) node[curve, label={-90:$C$}] (C) {}
        (F) edge (E2)
        (C) edge[bend right=30] (F) (C) edge[bend left=30] (F) (C) edge (E2)
        ;
      \end{tikzpicture}
    \end{equation*}
  \item $4 \sigma + 3F$ on $\F_{1/2}$:
    \begin{equation*}\label{F23}
      \begin{tikzpicture}[xscale=.75, yscale=.6, baseline={(0,-.5)}]
        \draw 
        (1,-1) node[curve, label={-90:$\sigma$}, label={180:$-1$}] (S) {}
        (1,0) node[curve, label=$-2$] (E1) {}
        (2,0) node[curve, label=$-1$] (F) {}
        (3,0) node[curve, label=$-2$] (E2) {}
        (2,-1) node[curve, label={-90:$C$}] (C) {}
        (E1) edge (F) (F) edge (E2)
        (S) edge (E1)  
        (C) edge (S) (C) edge[bend left=30] (F) (C) edge[bend right=30] (F)
        ;
      \end{tikzpicture}
      \leadsto
      \begin{tikzpicture}[xscale=.75, yscale=.6, baseline={(0,-.5)}]
        \draw 
        (2,0) node[curve, label=$0$] (F) {}
        (3,0) node[curve, label=$-2$] (E2) {}
        (2,-1) node[curve, label={-90:$C$}] (C) {}
        (F) edge (E2)
        (C) edge[bend left=45] (F) (C) edge[bend right=45] (F) (C) edge (F)
        ;
      \end{tikzpicture}
    \end{equation*}
  \item $4 \sigma + (7/2)F$ on $\F_{1/2}$:
    \begin{equation*}\label{F27/2}
      \begin{tikzpicture}[xscale=.75, yscale=.6, baseline={(0,-.5)}]
        \draw 
        (1,-1) node[curve, label={-90:$\sigma$}, label={180:$-1$}] (S) {}
        (1,0) node[curve, label=$-2$] (E1) {}
        (2,0) node[curve, label=$-1$] (F) {}
        (3,0) node[curve, label=$-2$] (E2) {}
        (2,-1) node[curve, label={-90:$C$}] (C) {}
        (E1) edge (F) (F) edge (E2)
        (S) edge (E1)  
        (C) edge (E1) (C) edge (F) (C) edge (E2) (C) edge (S)
        ;      
      \end{tikzpicture}
      \leadsto
      \begin{tikzpicture}[xscale=.75, yscale=.8, baseline={(0,-.5)}]
        \draw
        (1,0) node[curve, label=$1$] (E1) {}
        (1,-1) node[curve, label={-90:$C$}] (C) {}
        (C) edge [bend left=90] (E1)
        (C) edge [bend left=45] (E1)
        (C) edge (E1)
        (C) edge [bend right=45] node[int, right] {2} (E1)
        ;
        \fill [pattern=north east lines]
        (C.center) to [bend right=45] (E1.center) -- cycle;
      \end{tikzpicture}
    \end{equation*}

  \item $4 \sigma + 2F$ on $\F_{1/3}$:
    \begin{equation*}\label{F32}
      \begin{tikzpicture}[xscale=.75, yscale=.6, baseline={(0,-.5)}]
        \draw 
        (1,-1) node[curve, label={-90:$\sigma$}, label={180:$-1$}] (S) {}
        (1,0) node[curve, label=$-2$] (E1) {}
        (2,0) node[curve, label=$-2$] (E2) {}
        (3,0) node[curve, label=$-1$] (F) {}
        (4,0) node[curve, label=$-3$] (E3) {}
        (3,-1) node[curve, label={-90:$C$}] (C) {}
        (E1) edge (E2) (E2) edge (F) (F) edge (E3)
        (S) edge (E1) 
        (C) edge (E1) (C) edge (F)
        ;
      \end{tikzpicture}
      \leadsto
      \begin{tikzpicture}[xscale=.75, yscale=.6, baseline={(0,-.5)}]
        \draw 
        (3,0) node[curve, label=$0$] (F) {}
        (4,0) node[curve, label=$-3$] (E3) {}
        (3,-1) node[curve, label={-90:$C$}] (C) {}
        (F) edge (E3)
        (C) edge[bend right=30] (F) (C) edge[bend left=30] (F)
        ;
      \end{tikzpicture}
    \end{equation*}

  \item $4 \sigma + (7/3)F$ on $\F_{1/3}$:
    \begin{equation*}\label{F13}
      \begin{tikzpicture}[xscale=.75, yscale=.6, baseline={(0,-.5)}]
        \draw 
        (1,-1) node[curve, label={-90:$\sigma$}, label={180:$-1$}] (S) {}
        (1,0) node[curve, label=$-2$] (E1) {}
        (2,0) node[curve, label=$-2$] (E2) {}
        (3,0) node[curve, label=$-1$] (F) {}
        (4,0) node[curve, label=$-3$] (E3) {}
        (3,-1) node[curve, label={-90:$C$}] (C) {}
        (E1) edge (E2) (E2) edge (F) (F) edge (E3)
        (S) edge (E1) 
        (C) edge (S) (C) edge (F) (C) edge (E3)
        ;
      \end{tikzpicture}
      \leadsto
      \begin{tikzpicture}[xscale=.75, yscale=.6, baseline={(0,-.5)}]
        \draw 
        (3,0) node[curve, label=$0$] (F) {}
        (4,0) node[curve, label=$-3$] (E3) {}
        (3,-1) node[curve, label={-90:$C$}] (C) {}
        (F) edge (E3)
        (C) edge[bend left=30] (F) (C) edge[bend right=30](F) (C) edge (E3)
        ;
      \end{tikzpicture}
    \end{equation*}

  \item $4 \sigma + (8/3)F$ on $\F_{2/3}$:
    \begin{equation*}\label{F38/3}
      \begin{tikzpicture}[xscale=.75, yscale=.6, baseline={(0,-.5)}]
        \draw 
        (1,-1) node[curve, label={-90:$\sigma$}, label={180:$-1$}] (S) {}
        (1,0) node[curve, label=$-3$] (E1) {}
        (2,0) node[curve, label=$-1$] (F) {}
        (3,0) node[curve, label=$-2$] (E2) {}
        (4,0) node[curve, label=$-2$] (E3) {}
        (2,-1) node[curve, label={-90:$C$}] (C) {}
        (E1) edge (F) (F) edge (E2) (E2) edge (E3)
        (S) edge (E1) 
        (C) edge (F) (C) edge (E3)
        ;
      \end{tikzpicture}
      \leadsto
      \begin{tikzpicture}[xscale=.75, yscale=.6, baseline={(0,-.5)}]
        \draw 
        (3,0) node[curve, label=$0$] (E2) {}
        (4,0) node[curve, label=$-2$] (E3) {}
        (3,-1) node[curve, label={-90:$C$}] (C) {}
        (E2) edge (E3)
        (C) edge node[int]{2} (E2) (C) edge (E3) 
        ;
      \end{tikzpicture}
    \end{equation*}

  \item $4 \sigma + 3F$ on $\F_{2/3}$:
    \begin{equation*}\label{F33}
      \begin{tikzpicture}[xscale=.75, yscale=.6, baseline={(0,-.5)}]
        \draw 
        (1,-1) node[curve, label={-90:$\sigma$}, label={180:$-1$}] (S) {}
        (1,0) node[curve, label=$-3$] (E1) {}
        (2,0) node[curve, label=$-1$] (F) {}
        (3,0) node[curve, label=$-2$] (E2) {}
        (4,0) node[curve, label=$-2$] (E3) {}
        (2,-1) node[curve, label={-90:$C$}] (C) {}
        (E1) edge (F) (F) edge (E2) (E2) edge (E3)
        (S) edge (E1) 
        (C) edge (E1) (C) edge (F) 
        ;
      \end{tikzpicture}
      \leadsto
      \begin{tikzpicture}[xscale=.75, yscale=.6, baseline={(0,-.5)}]
        \draw 
        (1,0) node[curve, label=$1$] (E1) {}
        (2,-1) node[curve, label={-90:$C$}] (C) {}
        (C) edge[bend right=30] node[int]{3} (E1) (C) edge[bend left=30] (E1) 
        ;
      \end{tikzpicture}
    \end{equation*}

  \item $4 \sigma + 2F$ on $\F_{1/4}$:
    \begin{equation*}\label{F14}
      \begin{tikzpicture}[xscale=.75, yscale=.6, baseline={(0,-.5)}]
        \draw 
        (1,-1) node[curve, label={-90:$\sigma$}, label={180:$-1$}] (S) {}
        (1,0) node[curve, label=$-2$] (E1) {}
        (2,0) node[curve, label=$-2$] (E2) {}
        (3,0) node[curve, label=$-2$] (E3) {}
        (4,0) node[curve, label=$-1$] (F) {}
        (5,0) node[curve, label=$-4$] (E4) {}
        (4,-1) node[curve, label={-90:$C$}] (C) {}
        (E1) edge (E2) (E2) edge (E3) (E3) edge (F) (F) edge (E4)
        (S) edge (E1)
        (C) edge (S) (C) edge (F)
        ;
      \end{tikzpicture}
      \leadsto
      \begin{tikzpicture}[xscale=.75, yscale=.6, baseline={(0,-.5)}]
        \draw 
        (3,0) node[curve, label=$0$] (E3) {}
        (4,0) node[curve, label=$-4$] (E4) {}
        (3,-1) node[curve, label={-90:$C$}] (C) {}
        (E3) edge (E4)
        (C) edge [bend left=30] (E3) (C) edge [bend right=30] (E3)
        ;
      \end{tikzpicture}
    \end{equation*}

  \item $4 \sigma + 3F$ on $\F_{3/4}$:
    \begin{equation*}\label{F34}
      \begin{tikzpicture}[xscale=.75, yscale=.6, baseline={(0,-.5)}]
        \draw 
        (1,-1) node[curve, label={-90:$\sigma$}, label={180:$-1$}] (S) {}
        (1,0) node[curve, label=$-4$] (E1) {}
        (2,0) node[curve, label=$-1$] (F) {}
        (3,0) node[curve, label=$-2$] (E2) {}
        (4,0) node[curve, label=$-2$] (E3) {}
        (5,0) node[curve, label=$-2$] (E4) {}
        (2,-1) node[curve, label={-90:$C$}] (C) {}
        (E1) edge (F) (F) edge (E2) (E2) edge (E3) (E3) edge (E4)
        (S) edge (E1)
        (C) edge (F)
        ;
      \end{tikzpicture}
      \leadsto
      \begin{tikzpicture}[xscale=.75, yscale=.6, baseline={(0,-.5)}]
        \draw 
        (1,0) node[curve, label=$1$] (E1) {}
        (2,-1) node[curve, label={-90:$C$}] (C) {}
        (C) edge node[int] {4} (E1)
        ;
      \end{tikzpicture}
    \end{equation*}
  \end{enumerate}
\end{multicols}

We similarly analyze the curves $\sigma \sqcup C$ where $C$ is of type $3 \sigma + mF$.
\label{resolution_contraction_diagrams_2}
\begin{multicols}{2}
  \begin{enumerate}[resume=rc, label=\arabic*., ref=\arabic*]

  \item $3 \sigma + (9/2) F$ on $\F_{3/2}$:
    \begin{equation*}\label{F29/2}
      \begin{tikzpicture}[xscale=.75, yscale=.6, baseline={(0,-.5)}]
        \draw 
        (1,-1) node[curve, label={-90:$\sigma$}, label={180:$-2$}] (S) {}
        (1,0) node[curve, label=$-2$] (E1) {}
        (2,0) node[curve, label=$-1$] (F) {}
        (3,0) node[curve, label=$-2$] (E2) {}
        (2,-1) node[curve, label={-90:$C$}] (C) {}
        (E1) edge (F) (F) edge (E2)
        (S) edge (E1)  
        (C) edge (F) (C) edge (E2)
        ;
      \end{tikzpicture}
      \leadsto
      \begin{tikzpicture}[xscale=.75, yscale=.6, baseline={(0,-.5)}]
        \draw 
        (3,0) node[curve, label=$1$] (E2) {}
        (2,-1) node[curve, label={-90:$C$}] (C) {}
        (C) edge[bend left=30] node[int]{3} (E2) (C) edge[bend right=30] (E2)
        ;
      \end{tikzpicture}
    \end{equation*}

  \item $3 \sigma + 4F$ on $\F_{4/3}$:
    \begin{equation*}\label{F43}
      \begin{tikzpicture}[xscale=.75, yscale=.6, baseline={(0,-.5)}]
        \draw 
        (1,-1) node[curve, label={-90:$\sigma$}, label={180:$-2$}] (S) {}
        (1,0) node[curve, label=$-2$] (E1) {}
        (2,0) node[curve, label=$-2$] (E2) {}
        (3,0) node[curve, label=$-1$] (F) {}
        (4,0) node[curve, label=$-3$] (E3) {}
        (3,-1) node[curve, label={-90:$C$}] (C) {}
        (E1) edge (E2) (E2) edge (F) (F) edge (E3)
        (S) edge (E1) 
        (C) edge (F)
        ;
      \end{tikzpicture}
      \leadsto
      \begin{tikzpicture}[xscale=.75, yscale=.6, baseline={(0,-.5)}]
        \draw 
        (4,0) node[curve, label=$1$] (E3) {}
        (3,-1) node[curve, label={-90:$C$}] (C) {}
        (C) edge node[int]{4} (E3)
        ;
      \end{tikzpicture}
    \end{equation*}

  \item $3 \sigma + 5F$ on $\F_{5/3}$:
    \begin{equation*}\label{F53}
      \begin{tikzpicture}[xscale=.75, yscale=.6, baseline={(0,-.5)}]
        \draw 
        (1,-1) node[curve, label={-90:$\sigma$}, label={180:$-2$}] (S) {}
        (1,0) node[curve, label=$-3$] (E1) {}
        (2,0) node[curve, label=$-1$] (F) {}
        (3,0) node[curve, label=$-2$] (E2) {}
        (4,0) node[curve, label=$-2$] (E3) {}
        (2,-1) node[curve, label={-90:$C$}] (C) {}
        (E1) edge (F) (F) edge (E2) (E2) edge (E3)
        (S) edge (E1) 
        (C) edge (F)
        ;
      \end{tikzpicture}
      \leadsto
      \begin{tikzpicture}[xscale=.75, yscale=.6, baseline={(0,-.5)}]
        \draw 
        (1,-1) node[curve, label={-90:$\sigma$}, label={180:$-2$}] (S) {}
        (1,0) node[curve, label=$0$] (E1) {}
        (2,-1) node[curve, label={-90:$C$}] (C) {}
        (S) edge (E1) 
        (C) edge node[int]{3} (E1)
        ;
      \end{tikzpicture}
    \end{equation*}
  \end{enumerate}
\end{multicols}

The proof of \autoref{thm:1-5} now follows from \autoref{table:1to5} and the diagrams above. 
We discard rows 9, 10, 15, and 16 of \autoref{table:1to5} since they map to the interior of $\overline{\orb M}_6$.
For the remaining ones, we read off the description of $C_1$ and $C_2$ from the corresponding diagram and get $C = C_1 \cup C_2$.
While attaching $(C_1, S_1)$ to $(C_2, S_2)$, we must take into account whether the directrices of the $S_i$ meet each other or whether the directrix of one meets a co-directrix of the other.
From $S = \P V$ and $V = \O \oplus L$, we see that if the restrictions of $L$ to $\orb P_i$ have the same sign, then the two directrices meet and if they have the opposite sign, then a directrix meets a co-directrix.
Proceeding in this way, we get that rows 2, 5, 6, 13, and 14  map to loci in $\overline {\orb M}_6$ of dimension at most 10, and hence do not give divisors of $\overline Q$.
Row 1 gives divisor~\eqref{15:5}; rows 3 and 4 give divisor~\eqref{15:3}; rows 7 and 11 give divisor~\eqref{15:2}; row 8 gives divisor~\eqref{15:1}; and row 12 gives divisor~\eqref{15:4}.
The proof of \autoref{thm:1-5} is now complete.

\subsection{Towards divisors of type (6)--(8)}
To handle boundary divisors of type (6)--(8), we need to do some preparatory work.
First, we need to understand the tetragonal curves arising from finite maps to $\widetilde{\stack M}_{0,4}$ ramified over $\infty$.
Second, we need to understand the tetragonal curves arising from maps that contract the domain to the point $\infty \in \widetilde{\stack M}_{0,4}$.
Third, we need to understand the parity of the curve obtained by putting these together.

First, we consider finite maps to $\widetilde{\stack M}_{0,4}$ possibly ramified over $\infty$.
Away from the points mapping to $\infty$, a map to $\widetilde{\stack M}_{0,4}$ gives a fiberwise degree 4 curve in a $\P^1$-bundle.
The question that remains is then local around the points that map to $\infty$.
Let $D$ be a disk, set $\orb D = D(\sqrt[r]{0})$, and let $\phi \from \orb D \to \widetilde{\stack M}_{0,4}$ be a representable finite map that sends $0$ to $\infty$.
Let $n$ be the local degree of the map $D \to \P^1$ of the underlying coarse spaces.
Let $f \from (\orb S, \orb C) \to \orb D$ be the pullback of the universal family of 4-pointed rational curves.
Then $(\orb S_0, \orb C_0) \cong (\P^1 \cup \P^1, \{1,2,3,4\})$, where the $\P^1$'s meet in a node and $1, 2$ lie on on component and $3,4$ lie on the other.
Let $\pi \in \sym_4$ be the image in $\Aut_\infty \widetilde{\stack M}_{0,4}$ of a generator of $\Aut_0 \orb D$.

In the following proposition, an $A_n$ singularity over a disk with uniformizer $t$ is the singularity with the formal local equation $x^2-t^{n+1}$.
Thus an $A_0$ singularity is to be interpreted as a smooth ramified double cover and an $A_{-1}$ singularity as a smooth unramified double cover.
\begin{proposition}\label{thm:An_singularities}
  With the notation above, the curve $\coarse{\orb C}$ is the normalization of $\coarse {\orb C'}$, where $\orb C'$ is a curve of fiberwise degree 4 on a $\P^1$ bundle $\orb S'$ over an orbifold disk $\orb D'$ of one of the following forms.
  \begin{enumerate}
  \item [Case 1]: $\pi$ preserves the two components of $\orb S_0$.
    Then $\orb D' = D$ and $\orb S' = \P^1 \times D'$.
    On the central fiber of $\orb S' \to \orb D'$, the curve $\orb C'$ has an $A_i$ and an $A_j$ singularity over $\orb D'$ with $i+j=n-2$.
    If $n$ is even and $i$ is even, then $\pi$ is trivial; if $n$ is even and $i$ is odd, then $\pi$ has the cycle type $(2,2)$; and if $n$ is odd then $\pi$ has the cycle type $(2)$.
  \item [Case 2]: $\pi$ switches the two components of $\orb S_0$.
    Then $\orb D' = D(\sqrt[2]{0})$ and $\orb S' = \P(\O \oplus \O(1/2))$.
    Let $u \from \widetilde{D}' \to \orb D'$ be the universal cover.
    On the central fiber of $S' \times_u \widetilde {D}' \to \widetilde D'$, the curve $\orb C' \times_u \widetilde {D}'$ has two $A_{n-1}$ singularities over $\widetilde{D}'$ that are conjugate under the natural action of $\Z_2$.
    If $n$ is even then $\pi$ has the cycle type $(2,2)$, and if $n$ is odd then $\pi$ has the cycle type $(4)$.
  \end{enumerate}
\end{proposition}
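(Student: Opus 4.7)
The plan is to perform an explicit local-coordinate computation on the pulled-back family $\orb S \to \orb D$, then obtain $\orb S' \to \orb D'$ by a sequence of elementary transformations on the central fiber. First I would set up the local model: near $\infty \in \overline M_{0,4}$ the universal family has smooth total space with local equation $\tau = xy$, so the central fiber $\{xy=0\}$ is the nodal curve $\P^1 \cup \P^1$ with two marked points on each branch. The stabilizer $D_4 = \Aut_\infty \widetilde{\stack M}_{0,4}$ acts on this model so that the generic-stabilizer subgroup $K$ fixes $\tau$ and permutes the four marked points within $\sym_4$, while the quotient $\Z_2 = D_4/K$ acts on the base by $\tau \mapsto -\tau$. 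The branch-preserving elements form a Klein four subgroup of $D_4$, and the branch-swapping elements form its coset. Pulling back along $\phi$ and taking coarse spaces gives $\coarse{\orb S} \to D$ with local equation $xy = s^n$, an $A_{n-1}$ surface singularity at the node.

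For Case 1, since $\pi$ preserves the branches, the $\mu_r$-action on $\coarse{\orb S}$ respects the branch decomposition. I would construct $\coarse{\orb S'} = \P^1 \times D$ by first resolving the $A_{n-1}$ singularity, yielding a smooth surface whose central fiber is a chain $E_1 + F_1 + \cdots + F_{n-1} + E_2$ (with $E_i$ the proper transforms of the original branches, $(-1)$-curves, and the $F_k$ a chain of $(-2)$-curves), and then successively contracting all but one component via $(-1)$-blowdowns. Since $\coarse{\orb C}$ has two smooth points on each $E_i$ and is disjoint from the $F_k$, the blowdowns collide these into two singularities of $\coarse{\orb C'}$; a local power-series computation tracking tangent directions yields the types $A_i, A_j$, and the relation $i+j = n-2$ reflects the total length $n-1$ of the exceptional chain minus the one component retained as the final $\P^1$-fiber.

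For Case 2, $\pi$ swaps the branches, obstructing any equivariant contraction. I would pass to the universal double cover $u \from \widetilde D' \to \orb D'$ with $\orb D' = D(\sqrt[2]{0})$, which splits the swap. Upstairs, the Case 1 construction yields a trivial $\P^1$-bundle with two $A_{n-1}$ singularities conjugate under the covering involution. Descending to $\orb D'$ twists the $\P^1$-bundle by the $\Z_2$-character acting on the contracted branch's normal bundle; a direct cocycle calculation identifies the descent as $\P(\O \oplus \O(1/2))$. For the cycle-type/parity correspondences, since $K$ is the kernel of the projection $D_4 \to \Z_2 = \Aut_\infty \widetilde{\orb M}_{0,1+3}$, the condition $\pi \in K$ is equivalent to $n$ being even; enumerating the elements of each coset of $K$ by cycle type and branch action yields the claimed correspondences. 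In Case 1 the additional parity of $i$ distinguishes $\pi = 1$ from $\pi = (12)(34)$ by whether the four marked points collide into two unramified pairs (encoded by $A_{-1}$ and $A_1$-type data) or into two ramified single points (encoded by $A_0$-type data).

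The main technical obstacle is the bookkeeping in Case 1: producing the precise singularity types $A_i, A_j$ requires threading the blowup/blowdown sequence carefully through the resolution chain and tracking intersection multiplicities of the four marked sections with each successive exceptional curve. The secondary difficulty is the Case 2 descent: verifying that the descended $\P^1$-bundle is exactly $\P(\O \oplus \O(1/2))$ and not a different half-integer twist.
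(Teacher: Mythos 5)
Your overall strategy --- resolve the singularity of the total space at the node of the degenerate fiber, contract the exceptional chain equivariantly down to a single $\P^1$-bundle while tracking where the four marked sections collide, and descend along a double cover in the branch-swapping case --- is the same as the paper's. But two of your concrete computational claims are incorrect, and they happen to cancel. First, the pulled-back surface does not have local equation $xy=s^n$ over $D$. Near $\infty$ the coarse space of $\widetilde{\stack M}_{0,4}$ is the $\Z_2$-quotient of a disk in $\overline M_{0,4}$ (the stabilizer of $\infty$ in $\sym_4/K\cong\sym_3$ has order $2$), so a map of coarse local degree $n$ lifts, on the degree-$r$ universal cover $\widetilde D\to\orb D$, to a map to $\overline M_{0,4}$ of local degree $rn/2$. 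The paper accordingly works on $\widetilde D$, where $\widetilde S$ has equation $xy=t^{rn/2}$, i.e.\ an $A_{rn/2-1}$ singularity; the coarse space $\coarse{\orb S}$ itself is in general not of the form $xy=s^k$ (for $r>1$ its central fiber can even be non-reduced). Second, contracting $k$ consecutive $(-1)$-curves onto two disjoint sections produces an $A_{2k-1}$, not an $A_{k-1}$: the contact order of the two colliding branches grows by one per blowdown, so the $A$-index grows by two. Your ``chain length minus one'' count applied to an $A_{n-1}$ total-space singularity would actually give $i+j=2n-4$; the correct count applied to the correct $A_{n/2-1}$ (for $\pi$ trivial, $r=1$) gives $i+j=n-2$. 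The formula you land on is right, but neither of the two steps producing it is.

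The other gap is that your Case 1 never takes a quotient. When $\pi$ is a nontrivial branch-preserving element ($(12)$, $(34)$, or $(12)(34)$) one still has $r=2$: the resolution and contraction must be performed on $\widetilde S\to\widetilde D$ and then quotiented by the deck involution, and the resulting singularity types of $\coarse{\orb C'}$ depend on which pairs of the four sections the involution interchanges. This is precisely where the $A_0$'s and the parity dichotomy between $\pi=1$ and $\pi=(12)(34)$ come from --- for a $2$-cycle, for instance, the $A_0$ arises as the quotient of two disjoint sections swapped by the involution, not from any contraction --- so it cannot be dispatched by saying that ``the $\mu_r$-action respects the branch decomposition'' and working directly on coarse spaces. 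Your Case 2 reduction to Case 1 on the double cover is sound in outline and matches the paper's argument (contract to the middle component of the chain, then take the stack quotient), but it inherits the gaps above, in particular the unverified identification of the two conjugate singularities as $A_{n-1}$ rather than some other index.
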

\begin{remark}\label{rem:ij}
  The apparent choice of $i$ and $j$ in the first case is not a real ambiguity.
  By an elementary transformation centered on the $A_i$ singularity, we can transform $(i,j)$ to $(i-2,j+2)$.
\end{remark}
\begin{proof}
  Since $\phi \from \orb D \to \widetilde{\stack M}_{0,4}$ is representable, the map $\Aut_0 \orb D \to  \Aut_\infty \widetilde{\stack M}_{0,4} = D_4$ is injective.
  So the order $r$ of $0 \in \orb D$ is $1, 2,$ or $4$.
  Let $\widetilde D \to \orb D$ be the universal cover.
  Set $\widetilde S = \orb S \times_{\orb D} {\widetilde D}$ and $\widetilde C = \orb C \times_{\orb D}{\widetilde D}$.
  Then $\widetilde S$ is a surface with an action of $\Z_r$ compatible with the action of $\Z_r$ on $\widetilde D$.
  The action of the generator of $\Z_r$ on the central fiber of $\widetilde S \to \widetilde D$ is given by $\pi$.
  Note that $rn$ is the local degree of the map $\widetilde D \to \widetilde{\stack M}_{0,4}$.
  Therefore, the surface $\widetilde S$ has an $A_{m-1}$ singularity at the node of the central fiber $\widetilde S_0$, where $m = rn/2$.
  We take the minimal desingularization of $\widetilde S$, successively blow down the $-1$ curves on the central fiber compatibly with the action of $\Z_r$ until we arrive at a $\P^1$-bundle, and then take the quotient by the induced $\Z_r$ action.
  The resulting surface $\orb S'$ and curve $\orb C'$ are as claimed in the proposition.

  We illustrate the process for Case 2 and odd $n$, in which case $r = 4$.
  Let $t$ be a uniformizer on $\widetilde D$.
  In suitable coordinates, $\widetilde S \to \widetilde D$ has the form
  \[ \C[x,y,t]/(xy-t^{m}) \leftarrow \C[t],\]
  where $m = 2n$.
  A generator $\zeta \in \Z_4$ acts by
  \[ \zeta \cdot t = it, \quad \zeta \cdot x = y, \quad \zeta \cdot y = -x.\]
  Let $\hat S \to \widetilde S$ be the minimal desingularization.
  Then $\hat S_0$ is a chain of $\P^1$s, say 
  \[\hat S_0 = P_0 \cup P_1 \cup \dots \cup P_n \cup \dots \cup P_{m-1} \cup P_m,\]
  where $P_i$ meets $P_{i+1}$ nodally at a point.
  Under the induced action of $\Z_4$, the generator $\zeta$ sends $P_i$ to $P_{m-i}$.
  Contract $P_0, \dots, P_{n-1}$ and $P_m, \dots, P_{n+1}$ successively, leaving only $P_n$.
  Let $\overline S$ (resp. $\overline C$) be the image of $\hat S$ (resp. $\hat C$) under the contraction.
  Then $\overline C$ has two $A_{m-1}$ singularities on $P_n$, say at $0$ and $\infty$.
  The $\Z_4$ action descends to an action on $\overline S$ and the generator exchanges $0$ and $\infty$ on $P_n$.
  Note that the $\Z_2 \subset \Z_4$ acts trivially on the central fiber.
  Let us replace $\overline S$ (resp. $\overline C$, $\overline D$) by its geometric quotient under the $\Z_2$ action.
  Then $\overline S \to \overline D$ is a $\P^1$ bundle and $\overline C \subset \overline S$ has two $A_{n-1}$ singularities on the central fiber.
  The group $\Z_2$ acts compatibly on $(\overline S, \overline C)$ and $\overline D$ and exchanges the two singularities of $C$.
  Setting $\orb S' = [\overline S / \Z_2]$, $\orb C' = [\overline C/\Z_2]$, and $\orb D' = [\overline D/\Z_2]$ gives the desired claim.

  The other cases are analogous.
\end{proof}

Second, we consider maps that contract the domain to $\infty \in \widetilde{\stack M}_{0,4}$.
Let us denote the geometric fiber of the universal family $(\widetilde{\orb S}, \widetilde{\orb C}) \to \widetilde{\stack M}_{0,4}$ over $\infty$ by
\[ (\widetilde{\orb S}, \widetilde {\orb C})_\infty = (P_A \cup P_B, \{1,2,3,4\}) \text{ where } 1, 2 \in P_A \cong \P^1 \text{ and } 3, 4 \in P_B \cong \P^1.\]
We have
\[D_4 \cong \Aut_\infty \widetilde{\stack M}_{0,4} = \Stab(\{\{1,2\}, \{3,4\}\}) \subset \sym_4.\]
Over the $BD_4 \subset \widetilde{\stack M}_{0,4}$ based at $\infty$, the universal family is given 
by 
\[ (\widetilde {\orb S}, \widetilde {\orb C})_{BD_4} = [(P_A \cup P_B, \{1,2,3,4\}) / D_4].\]
We have the natural map
\begin{equation}\label{eqn:DihedralHyperelliptic}
  [\{1,2,3,4\} / D_4] \to [\{A, B\} / \Z_2].
\end{equation}
Let $\orb P$ be a smooth connected orbifold curve and let $\phi \from \orb P \to BD_4 \subset \widetilde{\stack M}_{0,4}$ be a representable morphism, where the $BD_4$ is based at $\infty$.
Set $\widetilde{\orb C}_\phi = \widetilde {\orb C} \times_\phi \widetilde{\stack M}_{0,4}$.
From \eqref{eqn:DihedralHyperelliptic}, we see that the degree 4 cover $\widetilde{\orb C}_\phi \to \orb P$ factors as a sequence of two degree 2 covers
\begin{equation}\label{eqn:CGP}
  \widetilde{\orb C}_\phi \to \orb G_\phi \to \orb P.
\end{equation}
The two points of ${\orb G}_\phi$ over a point of $\orb P$ are identified with the two components of $\widetilde {\orb S}$ over that point.

Let us analyze this factorization from the point of view of the tetragonal-trigonal correspondence (~\autoref{prop:recillas}).
Consider the induced map $\psi \from \orb P \to B\Z_2 \subset \widetilde{\orb M}_{0,1+3}$, where the $B\Z_2$ is based at $\infty$.
It is important to distinguish between the $4$ numbered points for $\orb M_{0,4}$ and those for ${\orb M}_{0,1+3}$.
We denote the latter by $I, II, III, IV$ with the convention that the $\sym_3 = \sym_4/K$ action is given by conjugation via the identification
\[ I \leftrightarrow (13)(24),\quad II \leftrightarrow (14)(32),\quad III \leftrightarrow (12)(34),\quad IV \leftrightarrow \id.\]
Then the $\Z_2 = D_4/K$ action switches $I$ and $II$ and leaves $III$ and $IV$ fixed.
As a result, the trigonal curve $\orb D = \orb D_\psi$ of \autoref{prop:recillas} is the disjoint union
\begin{equation}\label{eqn:DEP}
  \orb D_\psi = \orb P \sqcup \orb E_\psi,
\end{equation}
where $\orb E_\psi \to \orb P$ is a double cover.
(Caution: the double cover $\orb G_\phi \to \orb P$ of \eqref{eqn:CGP} is \emph{different} from the double cover $\orb E_\psi \to \orb P$ of \eqref{eqn:DEP}).
Let $\orb L$ be the norm-trivial two-torsion line bundle on $\orb D_\psi$ corresponding to the lift $\phi \from \orb P \to \widetilde{\stack M}_{0,4}$ of $\psi \from \orb P \to \widetilde{\orb M}_{0,1+3}$.
Since $\orb L^2$ is trivial, the action of the automorphism groups of points of $\orb D_\psi$ on the fibers of $\orb L$ is either trivial or by multiplication by $-1$.
The following proposition relates the ramification of $\coarse{\orb G}_\phi \to \coarse{\orb P}$ with this action.
\begin{proposition}\label{thm:CGP_action}
  Identify $\orb P$ with its namesake connected component in $\orb D_\psi = \orb P \sqcup \orb E_\psi$.
  Let $p \in \orb P$ be a point.
  Then $\coarse{\orb G}_\phi \to \coarse{\orb P}$ is ramified over $p$ if and only if the action of $\Aut_p \orb P$ on $\orb L_p$ is nontrivial.
\end{proposition}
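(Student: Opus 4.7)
I will reduce to the universal situation over $BD_4$ and express both conditions as the (non)triviality of a single $D_4$-character restricted to the subgroup $G := \Aut_p \orb P \hookrightarrow D_4$. The injection $G \hookrightarrow D_4$ follows from representability of $\phi$ at $p$, and all local data at $p$ (the fiber of $\orb G_\phi \to \orb P$, the fiber $\orb L_p$, and the monodromy of $\coarse{\orb G}_\phi \to \coarse{\orb P}$) are obtained by pulling back the corresponding universal $D_4$-equivariant data via $G \hookrightarrow D_4$. So it suffices to compute everything universally as $D_4$-representations or $D_4$-sets.

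On the ramification side, the 2-element fiber of $\orb G_\phi \to \orb P$ at $p$ is the $G$-set obtained from the $D_4$-set $\{A,B\} = \{\{1,2\},\{3,4\}\}$. The kernel of the $D_4$-action on this set is $\Stab(A) = \{\id,(12),(34),(12)(34)\}$, so the action factors through the nontrivial character $\chi_A \from D_4 \to \Z_2$ with $\ker\chi_A = \Stab(A)$. Since $\orb G_\phi \to \orb P$ is étale, its coarse-space map is ramified at $p$ iff $G$ acts nontrivially on the fiber, iff $\chi_A|_G$ is nontrivial. So the problem reduces to showing that $L_P = \orb L|_{\orb P}$ coincides universally with $\chi_A$ as a character of $D_4$.

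For this I would invoke the description of $\orb L$ from the proof of \autoref{prop:recillas}: let $\tau \from \widetilde{\orb D}_\psi \to \orb D_\psi$ be the étale double cover with $\tau_* \O_{\widetilde{\orb D}_\psi} = \O_{\orb D_\psi} \oplus \orb L$. Universally $\widetilde{\orb D}_\psi$ is the $D_4$-set of six transpositions in $\sym_4$, with $\tau\from (ab) \mapsto (ab)(cd)$, and the component $\orb P \subset \orb D_\psi$ is the $D_4$-fixed element $III = (12)(34)$. Hence $\tau^{-1}(III) = \{(12),(34)\}$ as a $D_4$-set under conjugation. Restricting $\tau_*\O = \O \oplus \orb L$ to $\orb P$ gives
\[ \C[\{(12),(34)\}] = \C \oplus L_P \]
as $D_4$-representations, and a routine check of the conjugation action shows that $\Stab_{D_4}((12)) = \Stab_{D_4}((34)) = \{\id,(12),(34),(12)(34)\} = \Stab(A)$, identifying the nontrivial summand as exactly $\chi_A$.

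The only delicate point is this last conjugation calculation: one needs to be careful to verify that it is precisely $\chi_A$, and not one of the other two nontrivial characters of $D_4^{\mathrm{ab}} \cong \Z_2 \times \Z_2$, that appears. Once $L_P = \chi_A$ is pinned down, combining with the ramification observation from paragraph two yields the desired equivalence: $\chi_A|_G$ nontrivial $\iff$ $G$ acts nontrivially on $\orb L_p$ $\iff$ $\coarse{\orb G}_\phi \to \coarse{\orb P}$ is ramified at $p$.
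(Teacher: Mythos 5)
Your proposal is correct and takes essentially the same route as the paper: both arguments use the explicit description of the Recillas double cover $\widetilde{\orb D} \to \orb D$ by transpositions, note that the fiber of $\widetilde{\orb D}$ over the point $(12)(34)$ of the $\orb P$-component is $\{(12),(34)\}$, and identify this two-element set with $\{A,B\}$, the fiber of $\orb G_\phi \to \orb P$. Your character-theoretic packaging (pinning down $L_P = \chi_A$ via $\Stab_{D_4}((12)) = \Stab(A)$) is just a reformulation of the paper's identification of $\Aut_p\orb P$-sets, so no further comment is needed.
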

\begin{proof}
  Write $\orb G_\phi = \orb G$, $\widetilde{\orb C}_\phi = \widetilde{\orb C}$, and so on.
  The map $\coarse{\orb G} \to \coarse{\orb P}$ is ramified over $p$ if and only if the action of $\Aut_p \orb P$ on the fiber $\orb G_p$ is non-trivial.
  Denote the fiber of $\widetilde{\orb C} \to \orb P$ over $p$ by $\{1,2,3,4\}$, considered as a set with the action of $\Aut_p \orb P$.
  Then the fiber of $\orb D \to \orb P$ over $p$ is
  \[\{I, II, III \} = \{(13)(24), (14)(32), (12)(34)\},\]
  among which $\{(13)(24), (14)(32)\}$ comprise points of $\orb E$ and $\{(12)(34)\}$ the point of $\orb P$.
  From the proof of \autoref{prop:recillas}, we know that the two-torsion line bundle $\orb L$ on $\orb D$ corresponds to the \'etale double cover $\tau \from \widetilde{\orb D} \to \orb D$ where the fiber of $\widetilde{\orb D}$ over $p$ is $\{(12),(13), (14), (23), (24), (34)\}$.
  The action of $\Aut_p \orb P$ on $\orb L_p$ is non-trivial if and only if the action of $\Aut_p\orb P$ on $\{(12),(34)\}$ is non-trivial.
  But we can identify the $\Aut_p\orb P$ set $\{(12),(34)\}$ with the $\Aut_p\orb P$ set $\{A,B\}$, which is precisely the fiber of $\orb G \to \orb P$ over $p$.
\end{proof}

We have all the tools to determine the stable images of the divisors of $\orb Q$ of type (6)--(8), but to be able to separate $\orb Q^{\rm odd}$ from $\orb Q^{\rm even}$, we need some further work.

We need to extend the blow-down construction in \eqref{eqn:basic_bijection}, which we recall.
Let $\orb P$ be a smooth orbifold curve of with $b$ orbifold points of order 2 and let $\phi \from \orb P \to \widetilde{\stack M}_{0,4}$ be a finite map of degree $b$ such that the underlying map $\orb P \to \widetilde{\orb M}_{0,1+3}$ is representable and has ramification type $(1,1, \dots)$ over $\infty$.
Let $f \from (\widetilde S, \widetilde C) \to \orb P$ be the pullback of the universal family of 4-pointed rational curves.
Let $p \in \orb P$ be an orbifold point.
Then we have $\widetilde S_p \cong \P^1 \cup \P^1$ and the $\Z_2 = \Aut_p \orb P$ acts trivially on one $\P^1$ and by an involution on the other.
By blowing down the component with the non-trivial action and taking the coarse space, we get a family $f \from (S, C) \to P$, where $P = \coarse{\orb P}$, the map $S \to P$ is a $\P^1$ bundle, and the curve $C \subset S$ is simply branched over $P$.

Now assume that $\orb P$ is reducible and $p \in \orb P$ is a smooth orbifold point of order 2 lying on a component that is contracted to $\infty$ by $\phi$.
(We still require that the underlying map $\phi \from \orb P \to \widetilde{\orb M}_{0,1+3}$ be representable at $p$.)
Locally near $p$, the curve $\orb P$ has the form $[\spec \C[x] / \Z_2]$.
Set $U = \spec \C[x]$.
Then $\widetilde S_U \cong P_1 \cup P_2$, where $P_i \cong \P^1 \times U$ and the $P_i$ are joined along sections $s_i \from U \to P_i$ (see \autoref{fig:blowup_blowdown}).
Like the case before, the action of $\Z_2$ on the fiber $P_1|_0 \cup P_2 |_0$ must be trivial on one component, say the first, and an involution on the other.
Unlike the case before, we cannot simply blow down the $\P^1$ with the non-trivial action.
Let $\hat S_U$ be the blow up of $\widetilde S_U$ along the (non-Cartier) divisor $P_2|_0$  (see \autoref{fig:blowup_blowdown}).
Then $\hat S_U = \hat P_1 \cup P_2$, where $\hat P_1$ is the blow up of $P_1$ at the point $s_1(0) = P_1 \cap P_2|_0$, and $\hat P_1$ and $P_2$ are joined along the proper transform $\hat s_1$ of $s_1$ and $s_2$.
The proper transform of $P_1|_0$ is a $-1$ curve on the $\hat P_1$ component of $\hat S_U$.
Let $\hat S_U \to S'_U$ be the blow-down along this $-1$ curve.
Then $S'_U \to U$ is a $\P^1 \cup \P^1$-bundle with a trivial $\Z_2$ action on the central fiber $S'_0$.
Therefore the quotient $S = S'_U / \Z_2$ is a $\P^1 \cup \P^1$ bundle over the coarse space $P$ of $\orb P$ at $p$.
The image $C \subset S$ of $\widetilde C \subset \widetilde S$ is simply branched over $p \in P$ and is disjoint from the singular locus of $S$.
We call the construction of $(S,C)$ from $(\widetilde S, \widetilde C)$ the \emph{blow-up blow-down construction}.

\begin{figure}
  \centering
  \begin{tikzpicture}
    \draw[thick]
    plot coordinates {(-1.2,1) (-1,0) (-0.98,-0.1)}
    plot coordinates {(-1.2,-1) (-1,0) (-0.98,0.1)}
    plot coordinates {(0.8,-1) (1,0) (1.02,0.1)}
    plot coordinates {(0.8,1) (1,0) (1.02,-0.1)}
    plot coordinates {(0.8,-1) (-1.2, -1)}
    plot coordinates {(0.8,1) (-1.2, 1)}
    plot coordinates {(-1,0) (1, 0)}
    plot coordinates {(-0.2,1) (0,0) (0.02,-0.1)}
    plot coordinates {(-0.2,-1) (0,0) (0.02,0.1)}
    ;

    \begin{scope}[shift={(4,0)}]
      \draw[thick]
      plot coordinates {(-1.2,1) (-1,0) (-0.98,-0.1)}
      plot coordinates {(-1.2,-1) (-1,0) (-0.98,0.1)}
      plot coordinates {(0.8,-1) (1,0) (1.02,0.1)}
      plot coordinates {(0.8,1) (1,0) (1.02,-0.1)}
      plot coordinates {(0.8,-1) (-1.2, -1)}
      plot coordinates {(0.8,1) (-1.2, 1)}
      plot coordinates {(-1,0) (1, 0)}
      plot coordinates {(-0.2,-1) (0,0) (0.02,0.1)}

      plot coordinates {(-0.2,1)  (0.1,0.5) }
      ;
      \draw[thick, smooth, dashed]
      plot coordinates {(0,0)  (-0.1, .4) (0.15, 0.8) }
      ;
    \end{scope}

    \begin{scope}[shift={(8,0)}]
      \draw[thick]
      plot coordinates {(-1.2,1) (-1,0) (-0.98,-0.1)}
      plot coordinates {(-1.2,-1) (-1,0) (-0.98,0.1)}
      plot coordinates {(0.8,-1) (1,0) (1.02,0.1)}
      plot coordinates {(0.8,1) (1,0) (1.02,-0.1)}
      plot coordinates {(0.8,-1) (-1.2, -1)}
      plot coordinates {(0.8,1) (-1.2, 1)}
      plot coordinates {(-1,0) (1, 0)}
      plot coordinates {(-0.2,-1) (0,0) (0.02,0.1)}
      ;
      \draw[dashed]
      plot coordinates {(-0.2,1) (0,0) (0.02,-0.1)}
      ;
    \end{scope}

    \draw
    (0, -1.5) node {$\widetilde S_U$}
    (4, -1.5) node {$\hat S_U$}
    (8, -1.5) node {$S'_U$}
    (2, 0) node {$\longleftarrow$}
    (6, 0) node {$\longrightarrow$};
  \end{tikzpicture}
  \caption{The blow-up blow-down construction}\label{fig:blowup_blowdown}
\end{figure}
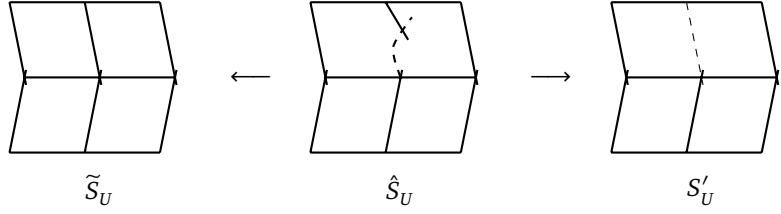

Let us verify that the blow-up blow-down construction is compatible in a one-parameter family with the blow-down construction.
This verification is local around the point $p$.
Let $\Delta$ be a DVR and $P \to \Delta$ a smooth (not necessarily proper) curve with a section $p \from \Delta \to P$.
Set $\orb P = P(\sqrt p)$.
Let $\phi \from \orb P \to \widetilde{\stack M}_{0,4}$ be a map such that the underlying map $\orb P \to \widetilde{\orb M}_{0,1+3}$ is representable.
Assume that for a generic point $t \in \Delta$, the map $\phi_t$ maps $p$ to $\infty$ and is \'etale around $p$ but $\phi_0$ contracts $\orb P_0$ to $\infty$.
Let $f \from (\widetilde S, \widetilde C) \to \orb P$ be the pullback by $\phi$ of the universal family of 4-pointed rational curves.
\begin{proposition}
  There exists a (flat) family $S \to P$ over $\Delta$ such that the generic fiber $S_t \to P_t$ is the $\P^1$ bundle obtained from $\widetilde S_t \to \widetilde P_t$ by the blow-down construction and the special fiber $S_0 \to P_0$ is the $\P^1 \cup \P^1$ bundle obtained from $\widetilde S_0 \to \widetilde P_0$ by the blow-up blow-down construction.
\end{proposition}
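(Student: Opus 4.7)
The strategy is to construct $S \to P$ by performing both the blow-up and the blow-down steps globally over $\Delta$ on the pullback $\widetilde S \to \orb P$, and then verify that the resulting family restricts correctly on each fiber. Since the claim is étale-local near $p(0) \in \orb P$, I would first fix a uniformizer $\pi$ of $\Delta$ and a local coordinate $x$ on $P$ cutting out $p$, so that $\orb P = [\spec \C[x,\pi]/\Z_2]$ with $\Z_2$ acting by $x \mapsto -x$. By the étaleness of $\phi_t$ at $p$ for $t \neq 0$ and the fact that $\phi_0$ contracts $\orb P_0$ to $\infty$ with some contact order $n$, pulling back the universal family along $\phi$ places $\widetilde S$ in the local analytic form
\[
\widetilde S \;\cong\; \bigl[\spec \C[u,v,x,\pi]/(uv - x \pi^n)\bigr]/\Z_2
\]
near the node section of the fiber over $p(0)$, with the $\Z_2$ action specified so as to descend correctly on both the generic and special fibers. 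This single local model simultaneously encodes both degenerations: at $\pi \neq 0$ one recovers the smooth surface with a single nodal fiber (on which the original blow-down construction operates), and at $\pi = 0$ one recovers $\{uv = 0\}$, namely the local picture of $\widetilde S_0 = P_1 \cup P_2$.

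Next, perform the global blow-up along the Weil divisor $P_2 \subset \widetilde S$ cutting out the component on which $\Z_2$ acts non-trivially on the central fiber of $\orb P_0$; in local coordinates this is blowing up the ideal $(v,\pi^n)$ (on the pre-quotient). The resulting $\hat S \to \Delta$ is flat, has smooth generic fiber (since the local blow-up over $\pi \neq 0$ is an isomorphism outside $x = 0$ and resolves the nodal fiber into its two components), and restricts on the special fiber to exactly the blow-up $\hat S_U$ appearing in the blow-up blow-down construction. One then identifies the relative family $E \subset \hat S$ of $(-1)$-curves: these are the proper transform of the redundant component over $p$ in each generic fiber, glued to the proper transform of $P_1|_0$ in the special fiber. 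This $E$ is a smooth $\P^1$-bundle over the closed subscheme $p(\Delta) \subset \Delta$, and by simultaneous contraction of $(-1)$-curves in families of surfaces over $\Delta$ it admits a blow-down $\hat S \to S'$ over $\Delta$. The induced $\Z_2$ action on $S'$ is trivial on the fiber $S'_0$ (as in the single-fiber argument) and fiberwise free away from $p$, so the geometric quotient $S := S'/\Z_2$ exists and is flat over $\Delta$.

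Finally, check the two fiberwise identifications. For $t \neq 0$: in the local model $\pi \neq 0$, the blow-up step is an isomorphism since $(v,\pi^n) = (v)$ becomes principal, and the contraction of $E$ reproduces the blow-down of the unwanted component of $\widetilde S_t$ over $p$; taking $\Z_2$ quotients gives precisely $S_t$ as in the blow-down construction following \eqref{eqn:basic_bijection}. For $t = 0$: the construction restricts on the special fiber, by construction, to the blow-up blow-down picture described in Figure~\ref{fig:blowup_blowdown}, yielding the $\P^1 \cup \P^1$-bundle $S_0 \to P_0$.

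The main obstacle I anticipate is the global blow-up step: showing that blowing up the non-Cartier Weil divisor $P_2$ produces a scheme flat over $\Delta$ whose fibers agree with the two separately-described fiberwise blow-ups, and then showing that the resulting locus of exceptional $(-1)$-curves actually forms a $\P^1$-bundle over its image in $\Delta$ that can be simultaneously contracted. A secondary subtlety is tracking the $\Z_2$-linearizations through the blow-up, blow-down, and quotient steps so that the descended action on $S'$ is indeed trivial on $S'_0$, which is what allows the final quotient $S'/\Z_2$ to realize the $\P^1 \cup \P^1$-bundle rather than something worse. Once these points are handled, the remaining verifications are routine local computations in the explicit model above.
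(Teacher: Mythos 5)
Your overall strategy is the same as the paper's: pass to the $\Z_2$-cover $U \to \orb P$, put $\widetilde S_U$ into the local form $yz = xt^n$ near the node, perform the blow-up and the contraction globally over $\Delta$, and descend by taking the $\Z_2$-quotient. However, as written the argument has a genuine gap in its two central steps. First, the blow-up center is misidentified. The ideal $(v,\pi^n)$ cuts out (a thickening of) the locus $\{v=\pi=0\}$, which is a \emph{vertical} divisor --- a component of the special fiber of the threefold over $\Delta$ --- not a component of $\widetilde S|_{p(\Delta)}$. The paper instead blows up the \emph{horizontal} surface $P_2 \subset \widetilde S|_{p(\Delta)}$, locally cut out by $(x,y)$, and crucially this is the component on which $\Z_2$ acts \emph{trivially}: the component with the non-trivial action is the one that must end up contracted, both so that the generic fibers reproduce the blow-down construction and so that the induced action on the exceptional direction in the special fiber is by $(-1)\cdot(-1)=+1$, making the residual $\Z_2$-action on $S'_U|_p$ trivial; with your choice the generic fibers come out wrong or the final quotient fails to be a $\P^1\cup\P^1$-bundle. (Relatedly, your parenthetical that the blow-up ``resolves the nodal fiber into its two components'' on generic fibers cannot be right: the blow-up must be, and is, an isomorphism there, since the ideal becomes principal on the smooth surface $yz=xt^n$, $t\neq 0$; the only operation performed on generic fibers is the subsequent contraction.)

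Second, the points you defer as ``the main obstacle I anticipate'' are precisely the content of the proposition, and they are never supplied: one must show that the blow-up of the threefold along the non-Cartier divisor commutes with restriction to $t=0$, so that the special fiber of the modified family is the fiberwise blow-up appearing in the blow-up blow-down construction, and that the proper transform of the other component restricts to a $(-1)$-curve in \emph{every} fiber, so that it can be contracted in the family with trivial residual $\Z_2$-action over $p$. In the paper this is exactly the proof: the explicit check that the specialization at $t=0$ of $\Bl_{(x,y)}\spec\C[x,y,z,t]/(yz-xt^n)$ is $\Bl_{(x,y)}\spec\C[x,y,z]/(yz)$, followed by the observation that $\hat P_1|_t$ is a $(-1)$-curve for all $t$ and that the action on $S'_U|_p$ is trivial. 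Once you replace your center by the trivially-acted horizontal component (ideal $(x,y)$) and carry out that specialization computation, the rest of your outline does go through as in the paper; without these, the proposal does not yet establish the statement.
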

\begin{proof}
  We may take $\orb P = [U / \Z_2]$, where $U \to \Delta$ is a smooth curve and $\Z_2$ acts freely except along a section $p \from \Delta \to U$.
  Say $\widetilde S|_p = P_1 \cup P_2$, where $P_i \to \Delta$ are $\P^1$-bundles meeting along a section and the $\Z_2$ acts trivially on $P_2$ and by an involution on $P_1$.
  Note that $\widetilde S_U|_t$ is a smooth surface for a generic $t$ and $P_1|_t \subset \widetilde S_U|_t$ is a $-1$ curve.
  Let $\beta \from \hat S_U \to S_U$ be the blow-up along $P_2$.
  Then $\beta_t$ is an isomorphism for a generic $t \in \Delta$.
  We claim that $\beta_0$ is is the blow up of $P_2|_0$ in $\widetilde S_U|_0$.
  To check the claim, we do a local computation.
  Locally around $p(0)$, we can write $U$ as 
  \[ \spec \C[x,t],\]
  where $p$ is cut out by $x$.
  Now $\widetilde S_U \to U$ is a family of curves whose generic fiber is $\P^1$, and whose discriminant locus (where the fiber is singular) is supported on $xt = 0$.
  Furthermore, we know that the multiplicity of the discriminant along $(x = 0)$ is $1$.
  Therefore, around the node of $\widetilde S|_0$, we can write $\widetilde S_U$ as
  \[ \spec \C[x,y,z,t]/(yz - xt^n).\]
  In these coordinates, say $P_2 \subset \widetilde S_U$ is cut out by the ideal $(x,y)$.
  Direct calculation shows that the specialization of $\Bl_{(x,y)} \spec \C[x,y,z,t]/(yz - xt^n)$ at $t = 0$ is $\Bl_{(x,y)}\spec \C[x,y,z]/(yz)$, as claimed.
  Let $\hat P_1 \subset \hat S_U$ be the proper transform of $P_1$.
  Then $\hat P_1 |_t \subset \hat S_U|_t$ is a $-1$ curve for all $t$.
  Let $\hat S_U \to S'_U$ be the blow-down.
  Then the action of $\Z_2$ on $S'_U|_p$ is trivial.
  The quotient $S = S'_U / \Z_2$ with the map $S \to P$ is the required family.
\end{proof}

Let $\phi \from \orb P \to \widetilde{\stack M}_{0,4}$ be an Abramovich--Vistoli stable map arising from a generic point of a divisor in $\orb Q$ of type (6)--(8).
Then $\orb P$ has 18 smooth orbifold points of order $2$.
Let $\orb P \to P$ be the coarse space at these 18 points.
Let $f \from (\widetilde S, \widetilde C) \to \orb P$ be the pullback of the universal family of 4-pointed rational curves and let $(S, C) \to P$ be the family obtained by the blow-up blow-down construction.
Then the surface $S$ is a degeneration of $\F_0$ or $\F_1$.
The following observation lets us distinguish the two cases.
\begin{proposition}\label{thm:section_parity}
  Suppose $s \from P \to S$ is a section lying in the smooth locus of $S \to P$.
  Then the self-intersection $s^2$ is an integer.
  If it is even (resp. odd), then $S$ is a degeneration of $\F_0$ (resp. $\F_1$).
\end{proposition}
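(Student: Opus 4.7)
The integrality of $s^2$ is immediate: because $s$ lies in the smooth locus of $S \to P$, the image $s(P)$ is a Cartier divisor on $S$, and $s^2$ equals the degree of the line bundle $s^{*}\O_S(s(P))$ on the proper curve $P$, hence an integer.

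For the parity, the plan is to compare with the smooth case via a one-parameter smoothing. By \autoref{prop:components}, the boundary divisor of $\orb Q$ containing $\phi$ lies in $\orb Q^{\rm odd}$ or $\orb Q^{\rm even}$, and these components are the closures of loci of tetragonal curves on $\F_n$ with $n$ odd or even respectively. Choose a morphism $\Delta \to \orb Q$ from a DVR $\Delta$ whose special point maps to $\phi$ and whose generic point lies in the interior of the component. Pulling back the universal family and applying the blow-up blow-down construction in this one-parameter family (the compatibility being the statement of the proposition just proved) yields a flat proper family $\mathcal{S} \to \mathcal{P} \to \Delta$ with central fiber $(S, P)$ and generic fiber $(\F_n, \P^1)$ for some $n$ of the required parity.

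The line bundle $\O_S(s(P))$ extends to a line bundle $\mathcal{L}$ on $\mathcal{S}$: since $S$ is a union of rational surfaces with $H^2(S, \O_S) = 0$, deformations of line bundles are unobstructed, and Grothendieck algebraization over the projective $\mathcal{S}/\Delta$ produces $\mathcal{L}$. Let $\alpha \in \Pic(\F_n)$ be the restriction of $\mathcal{L}$ to the generic fiber. Because intersection numbers are constant in flat proper families, $s^2 = \alpha^2$ and $[s(P)] \cdot F = \alpha \cdot F_\eta$, where $F$ and $F_\eta$ denote the fiber classes on $S$ and $\F_n$. Since $s$ is a section of $S \to P$, we have $[s(P)] \cdot F = 1$, so $\alpha \cdot F_\eta = 1$, forcing $\alpha = \sigma + kF_\eta$ for some integer $k$ in $\Pic(\F_n) = \Z\sigma \oplus \Z F_\eta$. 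Thus $s^2 = \alpha^2 = -n + 2k$, which has the parity of $n$. The main obstacle is the extension of $\O_S(s(P))$ across the degenerate central fiber and the preservation of intersection numbers; once $H^2$-vanishing on rational surfaces and flat invariance of Euler characteristics are in place, the computation on $\F_n$ is routine.
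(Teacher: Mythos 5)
Your proof is correct and follows essentially the same route as the paper: integrality via $s(P)$ being Cartier and $s^2 = \deg s^*\O_S(s(P))$, and parity by smoothing the line bundle to a fiberwise-degree-one class on the generic Hirzebruch surface $\F_n$, whose self-intersection has the parity of $n$. The paper simply asserts that ``a smoothing of $\orb L$'' exists and is terser; your added justification (unobstructedness from $H^2(S,\O_S)=0$ on the union of rational surfaces, algebraization, and constancy of intersection numbers in the flat family) fills in exactly the steps the paper leaves implicit.
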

\begin{proof}
  Note that $s(P) \subset S$ is a Cartier divisor.
  Let $\orb L$ be the associated line bundle.
  Then $s^2 = \deg(s^* \orb L)$.
  Since degrees of line bundles on $P$ are integers, $s^2$ is an integer.
  Suppose $S$ is a degeneration of $\F_i$.
  Then a smoothing of $\orb L$ is a line bundle on $\F_i$ of fiberwise degree 1.
  Its self-intersection determines the parity of $i$.
\end{proof}
Such a section $s$ does not always exist, however.
For example, the $\P^1 \cup \P^1$ bundle over a contracted component of $P$ may have non-trivial monodromy that exchanges the two components.
To distinguish odd and even in these cases, we must understand the parity of the limiting theta characteristic on the associated trigonal curve.

We quickly recall the theory of limiting theta characteristic from \cite{chi:08}.
Consider a one-parameter family of smooth curves degenerating to a nodal curve $C$.
Suppose we have a theta-characteristic on this family away from the central fiber.
Then, after possibly making a base change and replacing the nodes by orbifold nodes of order two, the theta-characteristic extends uniquely to a (locally free) theta-characteristic on the central fiber.
Note that the limit theta-characteristic may not be a line bundle on $C$ itself, but on $\orb C$, where $\orb C \to C$ is an orbinodal modification.
By a \emph{limiting theta characteristic} on $C$, we mean a theta characteristic on an orbinodal modification of $C$.
Suppose $\orb L$ is a theta characteristic on $\orb C$ and $x \in \orb C$ is an orbinode.
Then $\Aut_x\orb C$ acts on $\orb L_x$ by $\pm 1$.
Suppose the action is non-trivial.
Let $\nu \from \hat {\orb C} \to \orb C$ be the normalization at $x$ and $c \from \hat {\orb C} \to {\orb C}'$ the coarse space at the two points of $\hat {\orb C}$ over $x$.
Then $c_* \nu^* \orb L$ is a theta characteristic on ${\orb C}'$ and
\begin{equation}\label{eqn:theta-orbinode}
  h^0(\orb C, \orb L) = h^0\left({\orb C}', c_* \nu^* \orb L\right).
\end{equation}
Suppose the action is trivial.
Then $\orb L$ is a pullback from the coarse space around $x$, so we may assume that $\Aut_x \orb C$ is trivial.
Let $\nu \from \hat {\orb C} \to \orb C$ be the normalization at $x$, as before, and $x_1, x_2$ the two points of $\hat{\orb C}$ over $x$.
Let $\epsilon_x$ be the two-torsion line bundle on $\orb C$ obtained by taking the trivial line bundle on $\hat{\orb C}$ and gluing the fibers over $x_1$ and $x_2$ by $-1$.
Then $\orb L \otimes \epsilon_x$ is another theta characteristic on $\orb C$, and by \cite[Theorem~2.14]{har:82}  we have
\begin{equation}\label{eqn:theta-switch}
  h^0(\orb C, \orb L \otimes \epsilon_x) = h^0(\orb C, \orb L) \pm 1.
\end{equation}

Let $\orb Z \to \widetilde{\orb M}_{0,1+3}$ be a pointed degeneration and $\psi \from \orb P \to \orb Z$ a finite cover corresponding to a generic point of a divisor of type (6), (7), or (8).
Let $f \from \orb D_\psi \to \orb P$ the corresponding \'etale triple cover.
We assume that the orders of the orbinodes of $\orb Z$ and therefore $\orb D_\psi$ are sufficiently divisible.
Therefore, we have a limiting theta characteristic $\theta$ on $\coarse{\orb D}_\psi$.
Denote by the same symbol its pullback to $\orb D_\psi$.
Note that the action on $\theta_x$ of $\Aut_x \orb D_\psi$ is trivial for all $x$ except possibly the nodes.

Let $\orb P^{\rm main} \subset \orb P$ (resp. $\orb P^{\rm tail}$) be the union of the components that lie over the main (resp. tail) component of $\orb Z$.
Denote by $\orb D^{\rm main}_\psi$ (resp. $\orb D_\psi^{\rm tail}$) the pullback of $\orb D_\psi$ to $\orb P^{\rm main}$ (resp. $\orb P^{\rm tail}$).
Then $\orb D^{\rm tail}_\psi $ is the disjoint union $\orb P^{\rm tail} \sqcup \orb E_\psi$, where $\orb E_\psi \to \orb P^{\rm tail}$ is a double cover.
\begin{proposition}\label{thm:limit_theta}
  Let $x$ be a node of the $\orb P^{\rm tail}$ component of $\orb D^{\rm tail}_\psi$.
  Then the action of $\Aut_x {\orb D}_\psi$ on $\theta_x$ is non-trivial.
\end{proposition}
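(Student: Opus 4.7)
The plan is to prove the proposition via a smoothing argument: deform $\phi \from \orb P \to \orb Z$ to a generic point of $\orb Q$, track the (well-defined) theta characteristic $\theta_\eta = f_\eta^* \O(d-1)$ on the smooth generic fiber, and compute its limit as a line bundle on $\orb D_\psi$. Because the orbinode orders were arranged to be sufficiently divisible, the limit is a well-defined line bundle on $\orb D_\psi$, and the character of $\Aut_x$ on $\theta_x$ is an honest local invariant --- either the trivial character or the sign character --- that can be read off from a local normal form.

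First I would pin down local coordinates near $x$. The node $n$ of $\orb P$ underlying $x$ maps to $\infty \in \widetilde{\orb M}_{0,1+3}$, so locally $\orb P = [\spec \C[u,v]/(uv) / \mu_r]$ with $r$ sufficiently divisible. The induced monodromy of the trigonal cover $\orb D_\psi \to \orb P$ around $n$ is $(12) \in \sym_3$, coming from the composition $\mu_r \twoheadrightarrow \Aut_\infty \widetilde{\orb M}_{0,1+3} = \Z_2 \hookrightarrow \sym_3$. Accordingly, near $n$ the orbifold $\orb D_\psi$ has two components: the identity-sheet orbinode (of order $r$, containing $x$), and the swapped-pair orbinode (of order $r/2$). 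The claim becomes a computation of a $\mu_r$-character on $\theta_x$.

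Second, I would decompose using $\theta = \mu^*\O(1/2) \otimes \rho^*\O(-1)$ from \eqref{eqn:theta}. The factor $\rho^*\O(-1)$ is pulled back from the coarse base $\coarse{\orb P}'$, which at the node in question is a plain scheme-theoretic node; hence its contribution to the character at $x$ is trivial. The interesting contribution is from $\mu^*\O(1/2)$. Here the key geometric fact is that the identity sheet $\orb P^{\rm tail} \hookrightarrow \orb D_\psi$ factors through the orbifold preimage $p_\infty$ of $\infty$ in $\orb D$, which was coarsified away in passing to $\orb D'$. In a smoothing of $\phi$, this hidden $\Z_2$-gerbe structure at $p_\infty$ re-enters the limit: the square-root line bundle $\O(1/2)$ on $\orb D'$, when lifted along the smoothing, forces a non-trivial $\mu_r$-character at $x$. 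Conversely, the swapped-pair sheet maps to the non-orbifold preimage of $\infty$ in $\orb D$, where no such hidden character appears, which is why the statement is specifically about the identity sheet.

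The main obstacle is making the last step rigorous, because the naive contraction computation --- $\orb P^{\rm tail}$ is sent by $\mu$ to a smooth point of $\orb D'$, so $\mu^*\O(1/2)|_{\orb P^{\rm tail}}$ is a trivial line bundle with no visible character --- is misleading in the degenerate setting. The cleanest remedy is to work throughout on a one-parameter smoothing, where $\theta_\eta$ is unambiguously a line bundle, and use Chiodo's dictionary (equations \eqref{eqn:theta-orbinode} and \eqref{eqn:theta-switch}) to tie the $\mu_r$-character at $x$ to $h^0$-data that can be matched against the smooth fibers. Verifying this match --- effectively, computing the parity of a specific integer attached to the identity-sheet structure at $n$ --- is the technical heart of the argument.
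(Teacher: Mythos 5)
Your overall strategy---factor $\theta$ as (relative theta characteristic) $\otimes$ (pullback of a theta characteristic on the base) and compute the $\Aut_x$-character of each factor at $x$---is exactly the shape of the paper's argument, but you have attributed the sign character to the wrong factor in both cases, and the computation you actually need is the one you defer. Concretely: you claim the base factor $\rho^*\O(-1)$ contributes trivially because it is ``pulled back from the coarse base, which is a plain scheme-theoretic node.'' This is false. The limit of the theta characteristic $\O(-1)$ on $\P^1$, as the base degenerates to the nodal rational curve $P_1 \cup P_{\rm tail}$, is not a pullback from the coarse nodal curve: $\omega_P$ restricts to degree $-1$ on each rational component, so any square root has degree $-1/2$ on each component and therefore must live on the orbinodal modification with the \emph{sign} character at the node. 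This base factor is precisely where the non-triviality asserted in the proposition comes from.

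Symmetrically, the ``naive contraction computation'' that you dismiss as misleading for the relative factor is in fact the correct one. The identity sheet $\orb P^{\rm tail}$ maps to the component $\orb E_2'$ of the universal triple cover over the tail, on which the relative dualizing sheaf $\omega_{\orb D'/\orb Z'}$ has degree $0$; since $\orb E_2'$ is rational with a unique orbifold point, the degree-zero square root is the trivial line bundle and its character at that point is trivial. There is no ``hidden gerbe'' contribution from $\mu^*\O(1/2)$, and the mechanism you invoke to produce one---a one-parameter smoothing matched against $h^0$-data via \eqref{eqn:theta-orbinode} and \eqref{eqn:theta-switch}---is never carried out; it is exactly the ``technical heart'' you leave open (and those formulas are not needed here; they enter only later, in the parity-switching argument). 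As written, your proposal reaches the correct conclusion only because two sign errors cancel, so it does not constitute a proof.
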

\begin{proof}
  We look at the limiting relative theta characteristic on the universal family.
  Let $\orb D \to \orb Z$ be the pullback along $\orb Z \to \widetilde{\orb M}_{0,1+3}$ of the universal triple cover on $\widetilde{\orb M}_{0,1+3}$.
  Note that $\orb D$ has three components, say $\orb D_1$, $\orb E_1$, and $\orb E_2$, and the dual graph of $\orb D \to \orb Z$ is as follows
  \[
  \begin{tikzpicture}
    \draw
    (0,0) node[curve, double] (P1) {}
    +(150:.6) node[int]  {$1$} edge (P1)
    +(210:.6) node[int]  {$0$}  edge (P1)
    (1,0) node[curve] (P2) {}
    +(0:.6) node[int]  {$\infty$}  edge (P2);
    \begin{scope}[shift={(0,1.6)}]
      \draw
      (0, 0) node[curve, label={180:3}] (C1) {\scriptsize $D_1$}
      (1,.4) node[curve, label={0:2}] (E1) {\scriptsize $E_1$}
      (1,-.4) node[curve, label={0:1}] (E2) {\scriptsize $E_2$};
    \end{scope};
    \draw
    (C1) edge node[int] {$2$} (E1)
    (C1) edge node[int] {$1$} (E2)
    (P1) edge (P2);
  \end{tikzpicture}.
  \]
  Let $\orb Z'$ be obtained from $\orb Z$ by taking the coarse space at $\infty$ and $\orb D'$ (resp. $\orb E_1'$, $\orb E'_2$) from $\orb D$ (resp. $\orb E_1$, $\orb E_2$) by taking the coarse space at the points over $\infty$.
  Then the cover $\orb D' \to \orb Z'$ is simply branched over $\infty$, and it is a degeneration of the cover $\orb D' \to \orb M'_{0,1+3}$ in \eqref{eqn:theta}.
  The relative dualizing sheaf of $\orb D' \to \orb Z'$ has degree $0$ on $\orb E'_2$.
  Let $\theta_{\rm rel}$ be the limiting theta characteristic on $\coarse{\orb D'}$.
  Since $x_2 = \orb E_2' \cap \orb D_1$ is the unique orbifold point on $\orb E_2'$, the action of $\Aut_{x_2}\orb E_2'$ on $\theta_{\rm rel}$ at $x_2$ must be trivial.

  The map $\psi \from \orb D_\psi \to \orb D$ maps a node $x$ on the $\orb P^{\rm tail}$ component to $x_2$.
  Therefore, the action of $\Aut_x \orb D_\psi$ on $\psi^*\theta_{\rm rel}|_x$ is trivial.
  Let $\theta_{\orb P}$ be the unique limiting theta characteristic on $\coarse{\orb P}$.
  Then the action of $\Aut_{f(x)}\orb P$ on $\theta_{\orb P}$ at $f(x)$ is by $-1$ and $f \from \Aut_x \orb D_\psi \to \Aut_{f(x)} \orb P$ is an isomorphism.
  Since $\theta = \psi^*\theta_{\rm rel} \otimes f^*\theta_{\orb P}$, we get the assertion.
\end{proof}
\begin{remark}
  Let $\psi' \from \orb P' \to \widetilde{\orb M}_{0,1+3}$ be the Abramovich--Vistoli stable map obtained from $\psi \from \orb P \to \orb Z$ by contracting the unstable (= redundant) components of $\orb P^{\rm tail}$.
  Let $\orb D'_\psi \to \orb P'$ be the corresponding triple cover and let $\theta'$ be the limiting theta characteristic.
  The statement of \autoref{thm:limit_theta} holds also for $\orb D'_\psi \to \orb P'$ and $\theta'$.
  Indeed, in a neighborhood of the node $x$, the pairs $(\orb D_\psi, \theta)$ and $(\orb D'_\psi, \theta')$ are isomorphic. 
\end{remark}

We now have all the tools to determine the images in $\overline {\orb M}_6$ of the boundary components of $\orb Q^{\rm odd}$ of type (6), (7), and (8).

\subsection{Divisors of type (6)}\label{sec:6}
\begin{proposition}\label{thm:6}
  There are $10$ irreducible components of $\overline Q \cap \Delta$ which are images of divisors of type (6) in $\orb Q_6^{\rm odd}$.
  Their generic points correspond to the following stable curves:
  \begin{itemize}
  \item With the dual graph
    \tikz[baseline={(0,-0.1)}]{
      \draw(0,0) node (X) [curve, label={180:$X$}] {}
      (X) edge [loop right] (X);
    }
    \begin{enumerate}
    \item \label{6:11}
      A nodal plane quintic.
    \end{enumerate}
  \item With the dual graph
    \tikz[baseline={(0,-0.1)}]{
      \draw 
      (0,0) node (X) [curve, label={180:$X$}] {}
      (1,0) node (Y) [curve, label={0:$Y$}] {}
      (X) edge node[int] {$p$} (Y);
    }
    \begin{enumerate}[resume]
    \item
      \label{6:21}
      $(X,p)$ the normalization of a cuspidal plane quintic and $Y$ of genus 1.
    \item 
      \label{6:22}
      $X$ Maroni special of genus $4$, $Y$ of genus $2$, $p \in X$ a ramification point of the unique degree 3 map $X \to \P^1$, and $p \in Y$ a Weierstrass point.
    \item
      \label{6:23}
      $X$ a plane quartic, $Y$ hyperelliptic of genus 3, $p \in X$ a point on a bitangent, and $p \in Y$ a Weierstrass point.
    \item
      \label{6:24}
      $X$ of genus $2$, $Y$ hyperelliptic of genus 4, $p \in Y$ a Weierstrass point.
    \item 
      \label{6:25}
      $X$ a plane quartic, $Y$ hyperelliptic of genus 3, and $p \in X$ a hyperflex ($K_X = 4p$).
    \item 
      \label{6:26}
      $X$ of genus 1, $Y$ hyperelliptic of genus 5.
    \end{enumerate}
  \item With the dual graph
    \tikz[baseline={(0,-0.1)}]{
      \draw 
      (0,0) node (X) [curve, label={180:$X$}] {}
      (1,0) node (Y) [curve, label={0:$Y$}] {}
      (X) edge [bend right=45] node[int] {$q$} (Y)
      (X) edge [bend left=45] node[int] {$p$} (Y);
    }
    \begin{enumerate}[resume]
    \item 
      \label{6:31}
      $X$ Maroni special of genus $4$, $Y$ of genus $1$, and $p, q \in X$ on a fiber of the unique degree 3 map $X \to \P^1$.
    \item 
      \label{6:32}
      $X$ a plane quartic, $Y$ of genus 2, the line through $p, q \in X$ tangent to $X$ at a third point, and $p, q  \in Y$ hyperelliptic conjugate.
    \item
      \label{6:33}
      $X$ a curve of genus $2$, $Y$ hyperelliptic of genus 3, and $p, q \in Y$ hyperelliptic conjugate.
    \end{enumerate}
  \end{itemize}
\end{proposition}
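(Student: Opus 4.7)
The plan is to analyze each of the fourteen subcases $i \in \{1, \dots, 14\}$ of type (6) divisors in parallel with the treatment of types (1)--(5) in \autoref{sec:1-5}, using the blow-up blow-down construction and the limiting theta characteristic machinery to distinguish the $\orb Q^{\rm odd}$ component from $\orb Q^{\rm even}$.

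First, for a generic $(\phi \from \orb P \to \orb Z)$ in a type (6) divisor with parameter $i$, I would pass to the associated Abramovich--Vistoli stable map $\phi' \from \orb P' \to \widetilde{\stack M}_{0,4}$ by contracting redundant tail components. The result is a twisted curve $\orb P' = \orb P_M \cup \orb P_T$ where $\orb P_M \to \widetilde{\stack M}_{0,4}$ is a finite cover of degree $18$ with ramification profile $(2,\dots)$ over $0$, $(3,\dots)$ over $1$, and a single point of multiplicity $i$ over $\infty$, while $\orb P_T$ is contracted to $\infty \in \widetilde{\stack M}_{0,4}$ with degree $i$. Applying the blow-up blow-down construction to the pulled-back family $(\widetilde S, \widetilde C) \to \orb P'$ produces a family $(S, C) \to P'$, where the restriction over the (coarse) main component is a tetragonal curve on a Hirzebruch-type surface and the restriction over the tail is a $\P^1 \cup \P^1$-bundle carrying the limiting four-pointed configuration. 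Over the node, the local structure is dictated by \autoref{thm:An_singularities}: depending on the cycle type of the monodromy, we either have two $A_i, A_j$ singularities with $i + j = n - 2$ on a trivial $\P^1$-bundle, or a pair of conjugate $A_{n-1}$ singularities on $\P(\O \oplus \O(1/2))$.

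Next, for the component over $\orb P_T$, I would apply \autoref{prop:recillas} together with \autoref{thm:CGP_action} to the induced map $\psi \from \orb P_T \to B\Z_2$: the trigonal double cover $\orb G_\phi \to \orb P_T$ is governed by the norm-trivial two-torsion line bundle on $\orb D_\psi = \orb P_T \sqcup \orb E_\psi$, giving either a hyperelliptic structure, a pair of conjugate sections, or an unramified structure. For each of the fourteen values of $i$, this produces a specific candidate stable curve, which I would compute by applying \autoref{prop:orbiscrolls} exactly as in the tabulated diagrams on page~\pageref{resolution_contraction_diagrams_1}: perform minimal resolution of the Hirzebruch-type surface, contract $(-1)$-curves down to a minimal surface, and then read off the stable model of $C$ plus its glued tail. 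Dimension counting on the resulting stratum in $\overline{\orb M}_6$ determines whether the image is a divisor (dimension 11) or of higher codimension (and thus contributes nothing to $\overline Q \cap \Delta$).

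Finally, and this is where the hard work lies, I need to decide which of the surviving candidates lie in $\orb Q^{\rm odd}$ (rather than $\orb Q^{\rm even}$). For cases where a section of $S \to P'$ lying in the smooth locus exists, \autoref{thm:section_parity} reduces the question to a parity computation of a self-intersection. In the remaining cases, typically when the $\P^1 \cup \P^1$-bundle over the tail carries monodromy exchanging the components, I would instead invoke \autoref{thm:limit_theta}: the limiting theta characteristic on $\coarse{\orb D}_\psi$ acts by $-1$ on the nodes lying on the $\orb P^{\rm tail}$ component of $\orb D^{\rm tail}_\psi$, so I can compute its parity inductively using the node-splitting formulas \eqref{eqn:theta-orbinode} and \eqref{eqn:theta-switch}. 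The main obstacle is the bookkeeping: the fourteen values of $i$ interact with the two possible monodromy cycle types in Case 1 versus Case 2 of \autoref{thm:An_singularities} and with the two possible structures of the tail's trigonal cover, so one must tabulate all possibilities systematically. After this analysis, exactly ten candidates survive the parity and dimension filters, and matching them with the classical descriptions (nodal plane quintic, Maroni special genus 4, hyperelliptic cases, etc.) via the explicit resolution diagrams completes the proof.
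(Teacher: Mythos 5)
Your toolkit is the right one and matches the paper's proof of this proposition in outline (blow-up blow-down, \autoref{thm:An_singularities} at the node, the factorization \eqref{eqn:CGP} over the tail, \autoref{thm:section_parity} for parity, dimension counts, and the resolution diagrams for the classical identifications), but the plan is missing the reduction that actually makes the type (6) classification tractable. In type (6) the tail $P_{\rm tail}$ has a single orbifold point (the node), hence is simply connected, so the double cover $G \to P_{\rm tail}$ coming from \eqref{eqn:CGP} is automatically trivial and $C_{\rm tail}$ splits as $C^1_{\rm tail} \sqcup C^2_{\rm tail}$; moreover, if both of these double covers were nontrivial, composing one of them with the $2$-dimensional family $\Aut(P_{\rm tail}, x)$ would give a positive-dimensional family in the same boundary divisor with the same image, so the image would have codimension at least $2$ and one of the two covers must be the trivial one. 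It is this step that pins the monodromy at the node down to the identity or a $2$-cycle (so only Case 1 of \autoref{thm:An_singularities} occurs), identifies $\coarse{C_1}$ as the normalization of a curve of class $4\sigma + (3+2l)F$ on $\F_l$ with a single $A_{i-1}$ singularity, and makes the tail a hyperelliptic curve attached at a Weierstrass point (odd $i$) or at a hyperelliptic-conjugate pair (even $i$). Your proposed bookkeeping ``over the fourteen values of $i$,'' crossed with both monodromy cases and both tail structures, does not reflect this collapse, and without it you cannot reach the actual case list -- in particular the reducible $C_1'$ configurations for $i=2,4,6$, which are what produce the divisors \ref{6:21}, \ref{6:25} and \ref{6:26}.

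The second gap is the parity step. Because one component of $S_{\rm tail}$ is a trivial $\P^1$-bundle, $S \to P$ always carries a section lying in the smooth locus, with self-intersection $\equiv l \pmod 2$, and connectedness of $C$ leaves $l=1$ as the only odd possibility; that is the entire parity argument for type (6), and the limiting theta characteristic never enters. Your fallback -- determining the parity via \autoref{thm:limit_theta} and \eqref{eqn:theta-switch} when the $\P^1\cup\P^1$-bundle over the tail has component-exchanging monodromy -- addresses a case that cannot occur here, and even in types (7)--(8), where it does occur, the paper does not compute the parity this way: it uses the theta machinery only to show the question is moot (\autoref{thm:hard_parity_case_moot}), since twisting by $\epsilon_x$ flips the parity without changing the stable image. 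So that branch of your plan, as stated, would not go through and is in any case unnecessary for the proposition at hand.
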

The rest of this section is devoted to the proof of \autoref{thm:6}.

Recall that type (6) corresponds to $(\phi \from \orb P \to \orb Z, \orb Z \to \widetilde{\stack{M}}_{0,4})$ where $\phi$ has the following dual graph:
\[
\begin{tikzpicture}[baseline=(C2)]
  \draw
  (0,0) node[curve, double] (P1) {$\orb Z_1$}
  +(150:.6) node[int]  {$1$} edge (P1)
  +(210:.6) node[int]  {$0$}  edge (P1)
  (1,0) node[curve] (P2) {$\orb Z_{\rm tail}$}
  +(0:.6) node[int]  {$\infty$}  edge (P2);
  \begin{scope}[shift={(0,.8)}]
    \draw
    (0,0) node[curve, label={180:18}] (C1) {$\orb P_1$}
    (1,0) node[curve, label={0:$i$}] (E) {$\orb P_{\rm tail}$};
  \end{scope};
  \draw (C1) edge node[int] {$i$} (E)  (P1) edge (P2);
\end{tikzpicture}
\]
Let $(\widetilde S, \widetilde C) \to \orb P$ be the pullback of the universal family of 4-pointed rational curves.
Let $\orb P \to P$ be the coarse space away from the node.
Let $f \from (S, C) \to P$ be obtained from $(\widetilde S, \widetilde C)$ by the blow-up blow-down construction.
Define $C_1 = f^{-1}(P_1)$ and $C_{\rm tail} = f^{-1} (P_{\rm tail})$, and similarly for $S_1$ and $S_{\rm tail}$.
Set $x = P_1 \cap P_{\rm tail}$.
Denote the fiber of $S \to P$ over $x$ by $(\P^1 \cup \P^1, \{1,2,3,4\})$, where $1,2$ lie on one component and $3,4$ on the other.

\begin{figure}
  \centering
  \begin{tikzpicture}
    \draw[thick, smooth, tension=1]
    plot coordinates {(-1,1) (-.8, 0) (-1,-1) }
    plot coordinates {(1,1) (1.3,-0.2) }
    plot coordinates {(1,-1) (1.3,0.2) }
    plot coordinates {(-1,1) (1,1) }
    plot coordinates {(-1,-1) (1,-1) }
    ;
    \draw
    (1.1, .6) edge (.8, .6) (.8, .6) edge[dotted] (.4, .6)
    (1.2, .2) edge (.8, .2) (.8, .2) edge[dotted] (.4, .2)
    (1.1, -.6) edge (.8, -.6) (.8, -.6) edge[dotted] (.4, -.6)
    (1.2, -.2) edge (.8, -.2) (.8, -.2) edge[dotted] (.4,-.2)
    ;
    \draw
    (0,0) node {$C_1$};
    
    \begin{scope}[shift={(3,0)}]
      \draw[thick]
      plot coordinates {(-1,1) (-0.7,-0.2) }
      plot coordinates {(-1,-1) (-0.7,0.2) }
      plot coordinates {(1,1) (1.3,-0.2) }
      plot coordinates {(1,-1) (1.3,0.2) }
      plot coordinates {(-1,1) (1,1) }
      plot coordinates {(-1,-1) (1,-1) }
      plot coordinates {(-0.7,-0.2) (1.3,-0.2) }
      plot coordinates {(-0.7,0.2) (1.3,0.2) };
      \draw
      (1.1, .7) edge (.8, .7) (.8, .7) edge[dotted] (.5, .7)
      (1.2, .4) edge (.8, .4) (.8, .4) edge[dotted] (.5, .4)
      (1.1, -.7) edge (.8, -.7) (.8, -.7) edge[dotted] (.5, -.7)
      (1.2, -.4) edge (.8, -.4) (.8, -.4) edge[dotted] (.5,-.4)

      (-.9, .7) edge (-.5, .7) (-.5, .7) edge[dotted] (-.2, .7)
      (-.8, .4) edge (-.4, .4) (-.4, .4) edge[dotted] (-.1, .4)
      (-.9, -.7) edge (-.5, -.7) (-.5, -.7) edge[dotted] (-.2, -.7)
      (-.8, -.4) edge (-.4, -.4) (-.4, -.4) edge[dotted] (-.1,-.4)

      (0.2, .55) node {$C_{\rm tail}^1$}
      (0.2, -.55) node {$C_{\rm tail}^2$}
      ;
    \end{scope}
  \end{tikzpicture}
  \caption{A sketch of $C_1 \subset S_1$ and $C_{\rm tail} \subset S_{\rm tail}$ as in type (6)}\label{fig:6}
\end{figure}
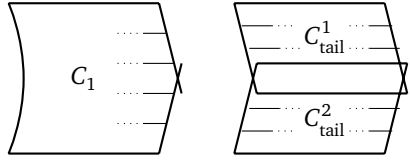
\subsubsection{Analyzing $P_{\rm tail}$}
The map $S_{\rm tail} \to P_{\rm tail}$ is a $\P^1 \cup \P^1$ bundle.
Recall the \'etale double cover $\orb G \to \orb P_{\rm tail}$ in \eqref{eqn:CGP} on page~\pageref{eqn:CGP}, whose fiber over $t$ corresponds to the two components of $S_{\rm tail}|_t$.
Since the action of $\Aut_t \orb P_{\rm tail}$ on the two components is trivial for all $t$ except possibly the node, $\orb G \to \orb P_{\rm tail}$ descends to an \'etale double cover $G \to P_{\rm tail}$.
Since $P_{\rm tail}$ has only one orbifold point, it is simply connected, and hence $G$ is the trivial cover $P_{\rm tail} \sqcup P_{\rm tail}$.
The degree 4 cover $C_{\rm tail} \to P_{\rm tail}$ factors as $C_{\rm tail} \to G \to P_{\rm tail}$.
Hence, it is a disjoint union $C_{\rm tail} = C^1_{\rm tail} \sqcup C^2_{\rm tail}$.
Both $C^i_{\rm tail}$ are hyperelliptic curves, each contained in a component of $S_{\rm tail}$ and lying away from the singular locus (See a sketch in \autoref{fig:6}).
We claim that if both $C^1_{\rm tail} \to P_{\rm tail}$ and $C^2_{\rm tail} \to P_{\rm tail}$ are nontrivial covers, then the boundary divisor maps to a locus of codimension at least 2 in $\overline Q$.
Indeed, compose $C^2_{\rm tail} \to P_{\rm tail}$ by an automorphism of $P_{\rm tail}$ that fixes $x$.
The resulting cover also represents an element of the same boundary divisor and has the same image if $\overline {\orb M}_6$.
The claim follows from the fact that there is a 2-dimensional choice of moduli for $\Aut(P_{\rm tail}, x)$.
We may thus assume that $C^2_{\rm tail} = P_{\rm tail} \sqcup P_{\rm tail}$.
Without loss of generality, take $C^2_{\rm tail}|_x = \{3,4\}$.
Then the monodromy of $\{1,2,3,4\}$ at $x$ is either trivial or $(12)$.
The map $C^1_{\rm tail} \to P_{\rm tail}$ is ramified at $i$ points.
The component of $S_{\rm tail}$ containing $C^1_{\rm tail}$ is the bundle $\P(\orb O \oplus \orb O(i/2))$.
The component of $S_{\rm tail}$ containing $C^2_{\rm tail}$ is the trivial bundle $\P^1 \times P_{\rm tail}$ (See \autoref{fig:6}).

\subsubsection{Analyzing $P_1$}
The map $S_1 \to P_1$ is a $\P^1$-bundle away from $x$; over $x$ the fiber is isomorphic to $\P^1 \cup \P^1$ (See \autoref{fig:6}).
By blowing down the component containing $1$ and $2$ as in the proof of \autoref{thm:An_singularities} , we see that $\coarse {C_1}$ is the normalization of a curve $C_1'$ on a Hirzebruch surface $\F_l$ which has an $A_{i-1}$ singularity over the fiber over $x$ along with two smooth unramified points, namely $\{3,4\}$.
Note that $S \to P$ admits a section of self-intersection $l \pmod 2$, which consists of a horizontal section of the component of $S_{\rm tail}$ containing $C^2_{\rm tail}$ and a section of $S_1 \to P_1$ that only intersects the component of $S_1|_x$ containing $\{3,4\}$.
Also note that $C_1' \subset \F_l$ is of class $4 \sigma + (3+2l) F$.
Since $C$ and hence $C_1'$ is connected, the only possible odd choice of $l$ is $l = 1$.

\subsubsection{Putting the components together}
By the analysis above, we see that the (pre)-stable images of generic points of divisors of type (6) are of the following two forms:
First, for odd $i$ we get $C_1 \cup_p C_{\rm tail}$, where $(C_1, p)$ is the normalization of a curve of class $4\sigma + 5F$ on $\F_1$ with an $A_{i-1}$ singularity, $C_{\rm tail}$ is a hyperelliptic curve of genus $(i-1)/2$, and $p \in C_{\rm tail}$ is a Weierstrass point.
Second, for even $i$ we get $C_1 \cup_{p,q} C_{\rm tail}$ where $(C_1, \{p,q\})$ is the normalization of a curve of class $4\sigma + 5F$ on $\F_1$ with an $A_{i-1}$ singularity, $C_{\rm tail}$ is a hyperelliptic curve of genus $i/2$ and $p, q \in C_{\rm tail}$ are hyperelliptic conjugate.
In both cases, we have $1 \leq i \leq 14$.
The case of $i = 1$ gives a smooth stable curve so we discard it.
The cases $i = 3, 5, 7, 9$ give the divisors \eqref{6:21}, \eqref{6:22}, \eqref{6:23}, \eqref{6:24} of \autoref{thm:6}.
The case of $i = 11$ yields a codimension 2 locus.
The case of irreducible $C_1'$ and $i = 2, 4, 6, 8$ give the divisors \eqref{6:11}, \eqref{6:31}, \eqref{6:32}, \eqref{6:33}, respectively.
The cases of $i = 10, 12$ yield codimension 2 loci.
We also have cases with reducible $C_1'$ for $i = 2, 4, 6$.
For $i = 2$, we can have $C_1'$ be the union of $\sigma$ with a tangent curve of class $3 \sigma + 5F$, which again gives divisor~\eqref{6:21}.
For $i = 4$, we can have $C_1'$ be the union of $\sigma + F$ with a 4-fold tangent curve of class $3 \sigma + 4F$, which gives divisor~\eqref{6:25}.
For $i = 6$, we can have $C_1'$ be the union of $\sigma + 2F$ with a 6-fold tangent curve of class $3 \sigma + 3F$ or the union of $2 \sigma + 2F$ with a 6-fold tangent curve of class $2 \sigma + 3F$, both of which give divisor~\eqref{6:26}.
The proof of \autoref{thm:6} is now complete.

\subsection{Divisors of type (7)}\label{sec:7}
\begin{proposition}\label{thm:7}
  There are $8$ irreducible components of $\overline Q \cap \Delta$ which are images of divisors of type (7) in $\orb Q_6^{\rm odd}$.
  Their generic points correspond to the following stable curves:

  \begin{itemize}
  \item With the dual graph
    \tikz{
      \draw 
      (0,0) node (X) [curve, label={180:$X$}] {} 
      (X) edge [loop right] (X);
    }
    \begin{enumerate}
    \item  $X$ hyperelliptic of genus $5$.
      \label{7:11}
    \end{enumerate}
  \item With the dual graph
    \tikz[baseline={(0,-0.1)}]{
      \draw 
      (0,0) node (X) [curve, label={180:$X$}] {} 
      (1,0) node (Y) [curve, label={0:$Y$}] {}
      (X) edge node[int]{$p$} (Y);
    }
    \begin{enumerate}[resume]
    \item $X$ of genus 2, $Y$ Maroni special of genus $4$, $p \in X$ a Weierstrass point, $p \in Y$ a ramification point of the unique degree 3 map $Y \to \P^1$.
      \label{7:21}
    \item $X$ hyperelliptic of genus $3$, $Y$ of genus 3, $p \in X$ a Weierstrass point, $p \in Y$ a point on a bitangent.
      \label{7:22}
    \item $X$ hyperelliptic of genus $4$, $Y$ of genus 2, $p \in X$ a Weierstrass point.
      \label{7:23}
    \end{enumerate}
  \item With the dual graph
    \tikz[baseline={(0,-0.1)}]{
      \draw 
      (0,0) node (X) [curve, label={180:$X$}] {}
      (1,0) node (Y) [curve, label={0:$Y$}] {}
      (X) edge [bend right=45]node[int] {$q$} (Y)
      (X) edge [bend left=45] node[int] {$p$} (Y);
    } 
    \begin{enumerate}[resume]
    \item $X$ hyperelliptic of genus 3, $Y$ of genus $2$, and $p \in Y$ a Weierstrass point.
      \label{7:31}
    \item $X$ of genus $2$, $Y$ a plane quartic, $p, q \in X$ hyperelliptic conjugate, the line through $p$, $q$ tangent to $Y$ at a third point.
      \label{7:32}
    \item  $X$ hyperelliptic of genus $3$, $Y$ of genus $2$, $p, q \in X$ hyperelliptic conjugate.
      \label{7:33}
    \end{enumerate}

    \item With the dual graph
      \tikz[baseline={(0,-0.1)}]{
      \draw 
      (0,0) node (X) [curve, label={180:$X$}] {}
      (1,0) node (Y) [curve, label={0:$Y$}] {}
      (X) edge [bend right=45] (Y)
      (X) edge (Y)
      (X) edge [bend left=45] (Y);
    }
      \begin{enumerate}[resume]
      \item $X$ hyperelliptic of genus $3$, $Y$ of genus $1$.
        \label{7:41}
      \end{enumerate}
  \end{itemize}
\end{proposition}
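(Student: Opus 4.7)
The plan is to adapt the strategy used in the proof of \autoref{thm:6} to the two-component main structure of type~(7). Fix a generic cover $(\phi \from \orb P \to \orb Z)$ of type~(7): the main component $\orb Z_1$ carries two preimages $\orb P_1, \orb P_2 \subset \orb P$ of degrees $6$ and $12$ respectively, and the tail $\orb P_{\rm tail}$ of degree $i+j$ meets them at the node with ramification indices $j$ and $i$, and is \'etale over $\infty$. Let $f \from (S,C) \to P$ denote the result of the blow-up blow-down construction applied to the pullback of the universal family of $4$-pointed rational curves.

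First I would analyze the tail. Exactly as in \autoref{sec:6}, the sheets double cover $\orb G \to \orb P_{\rm tail}$ from \eqref{eqn:CGP} descends to an \'etale double cover of the simply connected $P_{\rm tail}$ and is therefore trivial, so $C_{\rm tail}$ splits as $C^1_{\rm tail} \sqcup C^2_{\rm tail}$. The $2$-dimensional group $\Aut(P_{\rm tail}, \text{node})$ rules out the case that both pieces cover $P_{\rm tail}$ nontrivially, so we may assume $C^2_{\rm tail}= P_{\rm tail}\sqcup P_{\rm tail}$ and $C^1_{\rm tail} \to P_{\rm tail}$ is a nontrivial double cover branched at $i+j$ points, hyperelliptic of arithmetic genus $\lfloor(i+j-1)/2\rfloor$ and contained in a subbundle $\P(\O\oplus \O((i+j)/2)) \subset S_{\rm tail}$.

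Next I would handle the main side. Applying \autoref{thm:An_singularities} over each of $P_1$ and $P_2$, the coarse curve $\coarse{C_k}$ is the normalization of a curve $C'_k \subset \F_{l_k}$ of class $4\sigma + m_k F$ with an $A_{j-1}$ (resp.\ $A_{i-1}$) singularity on the fiber over the node together with two smooth unramified points corresponding to $C^2_{\rm tail}$; here $m_k$ is determined by the degree of $\orb P_k \to \orb Z_1$ and the node ramification. Connectedness of $C$ relates $l_1$ and $l_2$, and the combined application of \autoref{thm:section_parity} and \autoref{thm:limit_theta} to the associated trigonal cover singles out those configurations carrying an odd limit theta characteristic, i.e.\ those lying in $\orb Q^{\rm odd}$.

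Finally I would enumerate the pairs $(i,j)$ with $1 \leq i \leq 9$ and $1 \leq j \leq 4$, matching each admissible case to a stable curve. The stable image is $C = C_1 \cup C_2 \cup C^1_{\rm tail}$, where the hyperelliptic tail $C^1_{\rm tail}$ is attached to $C_1 \cup C_2$ along a Weierstrass point (when $i+j$ is odd) or a hyperelliptic conjugate pair (when $i+j$ is even), with the classical description of $C'_1, C'_2$ read off from explicit resolution-contraction diagrams in the spirit of \autoref{sec:1-5}. Cases yielding smooth stable curves or loci of codimension $\geq 2$ in $\overline Q$ are discarded; reducible cases of $C'_k$, in which a directrix splits off from a curve of class $3\sigma + m F$, contribute the special configurations such as the hyperflex, bitangent, and Maroni-special ones. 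The main obstacle is the careful bookkeeping needed to match each admissible quadruple $(i,j,l_1,l_2)$ to the correct entry in the list of eight stable curves and to verify via the parity analysis that exactly these appear in the odd component.
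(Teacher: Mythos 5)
Your outline breaks down at the tail analysis, and the error propagates into the enumeration. In type (7) the curve $P_{\rm tail}$ has \emph{two} orbifold points, namely the nodes $x_1 = P_{\rm tail}\cap P_1$ and $x_2 = P_{\rm tail}\cap P_2$, so the argument you import from type (6) does not apply: the double cover $G \to P_{\rm tail}$ induced by \eqref{eqn:CGP} can have nontrivial monodromy exchanging the two sheets at $x_1$ and $x_2$, and this case genuinely occurs. It is not a pathology to be excluded: the component \ref{7:32} (plane quartic with the line through $p,q$ tangent at a third point) arises \emph{only} from covers with nontrivial $G$. Moreover, in that case $S_{\rm tail}$ is a $\P^1\cup\P^1$ bundle whose components are exchanged by monodromy, so there is no section to which \autoref{thm:section_parity} can be applied; the paper resolves this not by computing the parity but by showing the question is moot (\autoref{thm:hard_parity_case_moot}, which uses \autoref{thm:CGP_action}, \autoref{thm:limit_theta}, and \eqref{eqn:theta-switch}): the two lifts of opposite parity have the same stable image. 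This entire branch, and the accompanying parity argument, is absent from your plan.

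Even in the case where $G$ is trivial, your reduction to ``$C^2_{\rm tail}=P_{\rm tail}\sqcup P_{\rm tail}$ with all $i+j$ branch points on $C^1_{\rm tail}$'' is unjustified. The type (6) argument discarding two nontrivial tail covers uses the $2$-dimensional group $\Aut(P_{\rm tail},x)$; here both $x_1$ and $x_2$ must be fixed, only a $1$-dimensional group remains, and configurations with \emph{both} $C^1_{\rm tail}$ and $C^2_{\rm tail}$ nontrivial do contribute divisors --- for instance \ref{7:41} comes precisely from $C^1_{\rm tail}$ of genus $0$ and $C^2_{\rm tail}$ of genus $1$. Your simplification also garbles the attaching data: the tail pieces meet $C_1$ and $C_2$ at the two separate nodes, with the branch points of each tail piece distributed between them, so the attachment (one ramified node versus two conjugate points) is governed separately at $x_1$ and $x_2$ by how the local ramification indices $j$ and $i$ interact with each sheet, not by the parity of $i+j$ alone (compare \eqref{eqn:form7} and the admissibility convention there). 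As written, your enumeration would miss at least the components \ref{7:32} and \ref{7:41} and would misassign attaching data in several of the remaining cases, so the count of eight components cannot be reached along the route you describe without repairing the tail analysis.
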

The rest of this section is devoted to the proof of \autoref{thm:7}.

Recall that type (7) corresponds to $(\phi \from \orb P \to \orb Z, \orb Z \to \widetilde{\stack{M}}_{0,4})$ where $\phi$ has the following dual graph:
\[
\begin{tikzpicture}[baseline=(C2)]
  \draw
  (0,0) node[curve, double] (P1) {$\orb Z_1$}
  +(150:.6) node[int]  {$1$} edge (P1)
  +(210:.6) node[int]  {$0$}  edge (P1)
  (1,0) node[curve] (P2) {$\orb Z_{\rm tail}$}
  +(0:.6) node[int]  {$\infty$}  edge (P2);
  \begin{scope}[shift={(0,.8)}]
    \draw
    (0,0) node[curve, label={180:6}] (C1) {$\orb P_1$}
    (0,.8) node[curve, label={180:12}] (C2) {$\orb P_2$}
    (1,.4) node[curve, label={0:$i+j$}] (E) {$\orb P_{\rm tail}$};
  \end{scope};
  \draw
  (C1) edge node[int] {$j$} (E)
  (C2) edge node[int] {$i$} (E)
  (P1) edge (P2);
\end{tikzpicture}
\]
Let $(\widetilde S, \widetilde C) \to \orb P$ be the pullback of the universal family of 4-pointed rational curves.
Let $\orb P \to P$ be the coarse space away from the nodes.
Let $f \from (S, C) \to P$ the family obtained by the blow-up blow-down construction. 
Define $C_1 = f^{-1}(P_1)$, and similarly for $C_2$, $C_{\rm tail}$, $S_1$, $S_2$, and $S_{\rm tail}$.
Set $x_1 = P_{\rm tail} \cap P_1$ and $x_2 = P_{\rm tail} \cap P_2$.

\subsubsection{Analyzing $P_{\rm tail}$}
The map $S_{\rm tail} \to P_{\rm tail}$ is a $\P^1 \cup \P^1$ bundle.
The \'etale double cover given by the two components is $\orb G \to \orb P_{\rm tail}$ of \eqref{eqn:CGP} which induces an \'etale double cover $G \to P_{\rm tail}$ as in type (6).
We have the factorization $C_{\rm tail} \to G \to P_{\rm tail}$.
Since $P_{\rm tail}$ has two orbifold points $x_1$ and $x_2$, this cover may be nontrivial.
If $G \to P_{\rm tail}$ is trivial, then $C_{\rm tail}$ is the disjoint union $C^1_{\rm tail} \sqcup C^2_{\rm tail}$ of two double covers of $P_{\rm tail}$.
If $G \to P_{\rm tail}$ is nontrivial, then $\coarse{G}$ is a rational curve and $\coarse{C_{\rm tail}}$ is its double cover.

\subsubsection{Analyzing $P_1$}
Denote the fiber of $S_1 \to P_1$ over $x_1$ by $(P_A \cup P_B, \{1,2,3,4\})$, where $P_A \cong P_B \cong \P^1$; $1,2 \in P_A$; and $3,4 \in P_B$.
Let $\pi \in \sym_4$ be a generator of the monodromy of $C_1 \to P_1$ at $x_1$.
By \autoref{thm:An_singularities}, $\coarse{C_1}$ is the normalization of $\coarse{C_1'}$, where $C_1'$ is a fiberwise degree 4 curve on a $\P^1$-bundle $S_1' = \F_l$ over $P_1'$ where $P_1' = \P^1$ or $P_1' = \P^1(\sqrt 0)$.
In either case, $C_1'$ is of class $4\sigma + (1+2l)F$.
From \autoref{prop:orbiscroll_4}, we see that the only possibilities for $l$ are $l = 0$ and $1$ if $P_1' = \P^1$ and $l = 1/2$ if $P_1' = \P^1(\sqrt 0)$.
Also, if $l = 1$ then $C_1'$ is the disjoint union of $\sigma$ with a curve in the class $3 \sigma + 3F$.

The case $P_1' = \P^1$ occurs if $\pi$ preserves the two components $P_A$ and $P_B$.
By \autoref{thm:An_singularities}, $C_1'$ has an $A_{i-k}$ and an $A_{k-2}$ singularity over $0$ for some $k = 1, \dots, i+1$.
By $\autoref{rem:ij}$, we may assume that the singularities are $A_{i-1}$ and $A_{-1}$ or $A_{i-2}$ and $A_0$ (if $i$ is even).

The case $P_1' = \P^1(\sqrt 0)$ occurs if $\pi$ switches the two components $P_A$ and $P_B$.
By \autoref{thm:An_singularities}, over an \'etale chart around $0 \in \P^1(\sqrt 0)$, the pullback of $C_1'$ has two $A_{i-1}$ singularities over $0$ that are conjugate under the $\Z_2$ action.
To identify such a curve in more classical terms, we use the strategy of \autoref{sec:1-5}.
Indeed, by diagram~\eqref{F22} on page~\pageref{resolution_contraction_diagrams_1}, we get that $\coarse {C_1'}$ is a curve of class $2 \sigma + 4 F$ on $\F_2$ disjoint from the directrix and with an $A_{i-1}$ singularity on the fiber of $\F_2 \to \P^1$ over $0$.

We now simply enumerate the possibilities for $\coarse{C_1}$ along with its attaching data with the rest of $C$, namely the divisor $\coarse{D_1} = \coarse{f^{-1}(x_1)}$.
We list the possible dual graphs for $(\coarse{C_1}, \coarse{D_1})$, where the vertices represent connected components of $\coarse{C_1}$ labeled by their genus, and the half-edges represent points of $\coarse{D_1}$, labeled by their multiplicity in $\coarse{D_1}$.
In the case where $\pi$ preserves $A$ and $B$, we record some additional data as follows.
We make the convention that the half edges depicted on top (resp. bottom) are images of the points which lie on $P_A \subset S_1$ (resp. $P_B \subset S_1$).
We then record the self-intersection (modulo 2) of a section $\sigma_A$ (resp. $\sigma_B$) of $S_1 \to P_1$ that lies in the smooth locus of $S_1 \to P_1$  and meets $P_A$ (resp. $P_B$).
In the case where $\pi$ switches $A$ and $B$, there is no such additional information.
Here we make the convention that the half-edges are depicted on the sides.

For example, let us take $i = 1$.
For $l = 0$, we get $C_1' \subset S_1' = \F_0$ of class $4\sigma + F$ with an $A_0$ singularity (that is, a point of simple ramification) over $0 \in \P^1$.
This gives us the dual graph in \ref{c1:11}.
To get the additional data, we must reconstruct $S_1$ from $S_1'$, which we do by a stable reduction of the 4-pointed family $(S_1', C_1') \to \P^1$ of rational curves around $0$.
To do so, set $P_1 = \P^1(\sqrt 0)$.
We first pass to the base change $S_1' \times_{\P^1} P_1$, on which the curve $C_1' \times_{\P^1} P_1$ has a node.
The blow up of $S_1' \times_{\P^1} P_1$ at the node and the proper transform of $C_1' \times_{\P^1} P_1$ gives the required family $(S_1, C_1)$.
The central fiber of $S_1 \to P_1$ is $P_A \cup P_B$, where $P_A$ is the exceptional curve of the blow up and $P_B$ is the proper transform of the original fiber.
The self-intersection of a section meeting $P_A$ (resp. $P_B$) is $-1/2$ (resp. $0$).
This leads to the complete picture \ref{c1:11}.
For $l = 1$, the some procedure gives \ref{c1:12}.
For $l = 1/2$, we get that $\coarse{C_1'}$ is a curve of class $2 \sigma + 4F$ on $\F_2$ disjoint from $\sigma$ and with an $A_0$ singularity (that is, a point of simple ramification) over the fiber $F$ of $\F_2 \to \P^1$ over $0$.
The divisor $\coarse{D_1'}$ is $\coarse{C_1'} \cap 2F$.
This leads to the picture \ref{c1:13}.
We get the pictures for $i = 2, 3, 4$ analogously.

\begin{itemize}
  \label{7:shorter_ends}
\item $i = 1$
  \begin{multicols}{2}
    \begin{enumerate}[series=c1, label=1.\arabic*., ref=1.\arabic*]
    \item
      \label{c1:11}
      \tikz[baseline, thick]{
        \draw (0,0) node (1) [curve, label={180:$0$}] {} 
        +(90:.4) node[int] {2} edge (1) 
        +(-75:.4) node[int] {} edge (1) 
        +(-105:.4) node[int] {} edge (1) 
        ;}\\
            $\sigma_A^2 = -1/2$,
      $\sigma_B^2 = 0$
          \item
      \label{c1:12}
      \tikz[baseline, thick]{
        \draw (0,0) node (1) [curve, label={180:$0$}] {} 
        +(-90:.4) node[int] {} edge (1)
        (1,0) node (2) [curve, label={180:$1$}] {}
        +(+90:.4) node[int] {2} edge (2) 
        +(-90:.4) node[int] {} edge (2) 
        ;}\\
            $\sigma_A^2 \equiv 1/2$,
      $\sigma_B^2 \equiv -1$
          \item
      \label{c1:13}
      \tikz[baseline, thick]{
        \draw (0,0) node (1) [curve, label={180:$1$}] {} 
        +(0:.4) node[int] {4} edge (1)
        ;}
    \end{enumerate}
  \end{multicols}

\item $i = 2$
  \begin{multicols}{2}
    \begin{enumerate}[resume*=c1]

    \item
      \label{c1:21}
      \tikz[baseline, thick]{
        \draw
        (0,0) node (1) [curve, label={180:0}] {} 
        +(90:.4) node[int] {} edge (1)
        (1,0) node (2) [curve, label={180:0}] {}
        +(90:.4) node[int] {} edge (2) 
        +(-75:.4) node[int] {} edge (2) 
        +(-105:.4) node[int] {} edge (2) 
        ;}\\
            $\sigma_A^2 \equiv -1$,
      $\sigma_B^2 \equiv 0$

    \item
      \label{c1:22}
      \tikz[baseline, thick]{
        \draw (0,0) node (1) [curve, label={180:0}] {} 
        +(90:.4) node[int] {2} edge (1) 
        +(-90:.4) node[int] {2} edge (1) 
        ;}
      \\
            $\sigma_A^2 \equiv -1/2$,
      $\sigma_B^2 \equiv -1/2$

    \item
      \label{c1:23}
      \tikz[baseline, thick]{
        \draw (0,0) node (1) [curve, label={180:0}] {} 
        +(20:.4) node[int] {2} edge (1) 
        +(-20:.4) node[int] {2} edge (1) 
        ;}
      \\
    \end{enumerate}
  \end{multicols}

\item $i = 3$
  \begin{multicols}{2}
    
    \begin{enumerate}[resume*=c1]
    \item
      \label{c1:31}
      \tikz[baseline, thick]{
        \draw (0,0)
        node (1) [curve, label={180:0}] {}
        +(-90:.4)
        node[int] {} edge (1) (1,0) node (2) [curve, label={180:0}] {}
        +(+90:.4) node[int] {2} edge (2)
        +(-90:.4) node[int] {} edge
        (2) ; }
      \\
            $\sigma_A^2 \equiv -1/2$,
      $\sigma_B^2 \equiv 1$ 
            
    \item
      \label{c1:32}
      \tikz[baseline, thick]{
        \draw (0,0) node (1) [curve, label={180:0}] {} 
        +(0:.4) node[int] {4} edge (1)
        ;}
    \end{enumerate}
  \end{multicols}
\item $i = 4$
  \begin{multicols}{2}
    \begin{enumerate}[resume*=c1]
    \item
      \label{c1:41}
      \tikz[baseline, thick]{
        \draw
        (0,0) node (1) [curve, label={180:0}] {} 
        +(90:.4) node[int] {} edge (1)
        (1,0) node (2) [curve, label={180:0}] {}
        +(90:.4) node[int] {} edge (2) 
        +(-90:.4) node[int] {} edge (2)
        (2,0) node (3) [curve, label={180:0}]{}
        +(-90:.4) node[int] {} edge (3)
        ;
      }\\
            $\sigma_A^2 \equiv 1$,
      $\sigma_B^2 \equiv 1$ 
      
    \item
      \label{c1:42}
      \tikz[baseline, thick]{
        \draw (0,0) node (1) [curve, label={180:0}] {} 
        +(0:.4) node[int] {2} edge (1) 
        (1,0) node (2) [curve, label={180:0}] {} 
        +(0:.4) node[int] {2} edge (2) 
        ;}
    \end{enumerate}
  \end{multicols}
\end{itemize}

\subsubsection{Analyzing $P_2$}
The story here is entirely analogous to that of $P_1$, except that the curve $C_2' \subset \F_l$ is of class $4 \sigma + (2+2l)F$, and the allowed values of $l$ are $l = 0$, $1/2$, $1$, and $2$.
The case of $l = 2$ corresponds to a disjoint union of $\sigma$ and $3 \sigma + (2+2l)F$.
The case of $l = 1/2$ corresponds to diagram \ref{F23} on page~\pageref{resolution_contraction_diagrams_1}, which shows that $\coarse{C_2'}$ is a curve of class $3 \sigma + 6 F$ on $\F_2$ disjoint from $\sigma$ and with an $A_{j-1}$ singularity on the fiber over $0$.
We enumerate the possibilities with the same conventions as before.

\begin{itemize}
  \label{7:longer_ends}
\item $j$ odd, say $j = 2p+1$.
  \begin{multicols}{2}
    \begin{enumerate}[series=c2, label=2.\arabic*., ref=2.\arabic*]
    \item
      \label{c2:odd1}             
      \tikz[baseline, thick]{
        \draw (0,0) node (1) [curve, label={180:$3-p$}] {} 
        +(90:.4) node[int] {2} edge (1)
        +(-75:.4) node[int] {} edge (1)
        +(-105:.4) node[int] {} edge (1)
        ;}\\
            $\sigma_A^2 \equiv p+1/2$,
      $\sigma_B^2 \equiv 1$\\
      For $0 \leq p \leq 3$.
            \\
    \item
      \label{c2:odd2}
      \tikz[baseline, thick]{
        \draw (0,0) node (1) [curve, label={180:$3-p$}] {} 
        +(90:.4) node[int] {2} edge (1)
        +(-75:.4) node[int] {} edge (1)
        +(-105:.4) node[int] {} edge (1)
        ;}\\
            $\sigma_A^2 \equiv p-1/2$,
      $\sigma_B^2 \equiv 0$\\
      For $0 \leq p \leq 3$.

    \item
      \label{c2:odd3}
      \tikz[baseline, thick, xscale=-1]{
        \draw (0,0) node (1) [curve, label={0:$4-p$}] {} 
        +(90:.4) node[int] {2} edge (1)
        +(-90:.4) node[int] {} edge (1)
        (1,0) node (2) [curve, label={180:$0$}] {}
        +(-90:.4) node[int] {} edge (2)
        ;}\\
            $\sigma_A^2 \equiv p-1/2$,
      $\sigma_B^2 \equiv 0$ 
      
    \item
      \label{c2:odd4}
      \tikz[baseline, thick]{
        \draw (0,0) node (1) [curve, label={180:$4-p$}] {} 
        +(0:.4) node[int] {4} edge (1)
        ;}
    \end{enumerate}
  \end{multicols}
\item $j$ even, say $j = 2p$.
  \begin{multicols}{2}
    \begin{enumerate}[resume*=c2]
    \item
      \label{c2:even1}
      \tikz[baseline, thick]{
        \draw (0,0) node (1) [curve, label={180:$3-p$}] {} 
        +(75:.4) node[int] {} edge (1)
        +(105:.4) node[int] {} edge (1)
        +(-75:.4) node[int] {} edge (1)
        +(-105:.4) node[int] {} edge (1)
        ;}\\
            $\sigma_A^2 \equiv p+1$,
      $\sigma_B^2 \equiv 1$\\
      For $1 \leq p \leq 3$.
      
    \item
      \label{c2:even2}
      \tikz[baseline, thick]{
        \draw (0,0) node (1) [curve, label={180:$3-p$}] {} 
        +(75:.4) node[int] {} edge (1)
        +(105:.4) node[int] {} edge (1)
        +(-75:.4) node[int] {} edge (1)
        +(-105:.4) node[int] {} edge (1)
        ;}\\
            $\sigma_A^2 \equiv p$,
      $\sigma_B^2 \equiv 0$\\
      For $1 \leq p \leq 3$.
      
    \item
      \label{c2:even3}
      \tikz[baseline, thick, xscale=-1]{
        \draw (0,0) node (1) [curve, label={0:$4-p$}] {} 
        +(75:.4) node[int] {} edge (1)
        +(105:.4) node[int] {} edge (1)
        +(-90:.4) node[int] {} edge (1)
        (1,0) node (2) [curve, label={180:$0$}] {}
        +(-90:.4) node[int] {} edge (2)
        ;}\\
            $\sigma_A^2 \equiv p$,
      $\sigma_B^2 \equiv 0$ 
      
    \item
      \label{c2:even4}
      \tikz[baseline, thick]{
        \draw (0,0) node (1) [curve, label={180:$0$}] {} 
        +(90:.4) node[int] {} edge (1)
        +(-90:.4) node[int] {} edge (1)
        (1,0) node (2) [curve, label={180:$0$}] {}
        +(90:.4) node[int] {} edge (2)
        +(-90:.4) node[int] {} edge (2)
        ;}\\
            $\sigma_A^2 \equiv 1$,
      $\sigma_B^2 \equiv 1$ \\
      For $p = 4$.

    \item
      \label{c2:even5}
      \tikz[baseline, thick]{
        \draw (0,0) node (1) [curve, label={180:$0$}] {} 
        +(90:.4) node[int] {} edge (1)
        +(-90:.4) node[int] {} edge (1)
        (1,0) node (2) [curve, label={180:$0$}] {}
        +(90:.4) node[int] {} edge (2)
        +(-90:.4) node[int] {} edge (2)
        ;}\\
            $\sigma_A^2 \equiv 0$,
      $\sigma_B^2 \equiv 0$ \\
      For $p = 4$.

    \item
      \label{c2:even6}
      \tikz[baseline, thick, xscale=.8]{
        \draw (0,0) node (1) [curve, label={180:$0$}] {} 
        +(90:.4) node[int] {} edge (1)
        (1,0) node (2) [curve, label={180:$1$}] {}
        +(90:.4) node[int] {} edge (2)
        +(-90:.4) node[int] {} edge (2)
        (2,0) node (3) [curve, label={180:$0$}] {}
        +(-90:.4) node[int] {} edge (3)
        ;}\\
            $\sigma_A^2 \equiv 0$,
      $\sigma_B^2 \equiv 0$ \\
      For $p = 4$.

    \item
      \label{c2:even7}
      \tikz[baseline, thick]{
        \draw (0,0) node (1) [curve, label={180:$4-p$}] {} 
        +(90:.4) node[int] {2} edge (1)
        +(-90:.4) node[int] {2} edge (1)
        ;}\\
            $\sigma_A^2 \equiv p-3/2$,
      $\sigma_B^2 \equiv 3/2$
      
    \item
      \label{c2:even7.5}
      \tikz[baseline, thick]{
        \draw (0,0) node (1) [curve, label={180:$4-p$}] {} 
        +(90:.4) node[int] {2} edge (1)
        +(-90:.4) node[int] {2} edge (1)
        ;}\\
            $\sigma_A^2 \equiv p-1/2$,
      $\sigma_B^2 \equiv 1/2$
      
    \item
      \label{c2:even8}
      \tikz[baseline, thick]{
        \draw (0,0) node (1) [curve, label={180:$4-p$}] {} 
        +(20:.4) node[int] {2} edge (1)
        +(-20:.4) node[int] {2} edge (1)
        ;}\\

    \item
      \label{c2:even9}
      \tikz[baseline, thick]{
        \draw (0,0) node (1) [curve, label={180:$1$}] {} 
        +(0:.4) node[int] {2} edge (1)
        (1,0) node (1) [curve, label={180:0}] {}
        +(0:.4) node[int] {2} edge (1)
        ;}\\
            For $p = 4$.
          \end{enumerate}
  \end{multicols}
\end{itemize}

The marked curves appearing as $C_2$ above are not arbitrary in moduli. 
But it is easy to find which marked curves appear by using that they are normalizations of a singular curve $C_2'$ on a known surface of a known class and a known singularity.
We now write down these descriptions.
We denote by $a$ or $a_1,a_2$ (resp. $b$ or $b_1, b_2$) the point(s) represented by the half-edges on top (resp. bottom).
The numbering goes from the left to the right.

\begin{center}
\rowcolors{2}{gray!15}{white}
\begin{longtable}{l l p{.70\textwidth}}
  \hline
  \rowcolor{gray!25}
  Dual graph & $p$ & Description\\
  \hline
  \endhead
  \ref{c2:odd1}  & 0 & Plane quartic with $2a+b_1+b_2$ a canonical divisor \\
  \ref{c2:odd1}  & 1 & Genus 2 with $b_1$ and $b_2$ hyperelliptic conjugate\\
  \ref{c2:odd1}  & 2, 3 & Any moduli\\
  \ref{c2:odd2} & 0 & Hyperelliptic genus 3 with 3 marked points\\
  \ref{c2:odd2} & 1 & Genus 2 with $a$ a Weierstrass point\\
  \ref{c2:odd2} & 2,3 & Any moduli\\
  \ref{c2:odd3} & 0  & $\P^1 \sqcup $ Maroni special of genus 3 with $2a+b_2$ the $g^1_3$\\
  \ref{c2:odd3} & 1 & $\P^1 \sqcup $ plane quartic with $2a+2b_2$ a canonical divisor\\
  \ref{c2:odd3} & 2 & $\P^1 \sqcup $ genus 2 with $b_2$ a Weierstrass point\\
  \ref{c2:odd3} & 3 & $\P^1 \sqcup $ genus 1 with $a-b_2$ two-torsion\\
  \ref{c2:odd3} & 4 & Any moduli\\
  \ref{c2:odd4} & 0 & Maroni special genus 4 with a ramification point of the $g^1_3$\\
  \ref{c2:odd4} & 1 & Plane quartic with a point on a bitangent \\
  \ref{c2:odd4} & 2,3,4 & Any moduli\\
  \ref{c2:even1} & 1 & Genus 2 with $b_1$, $b_2$ hyperelliptic conjugate\\
  \ref{c2:even1} & 2, 3 & Any moduli\\
  \ref{c2:even2} & 1 & Genus 2 with $a_1$, $a_2$ hyperelliptic conjugate\\
  \ref{c2:even2} & 2, 3 & Any moduli\\
  \ref{c2:even3} & 1 & $\P^1 \sqcup $ plane quartic with $a_1+a_2+2b_2$ a canonical divisor\\
  \ref{c2:even3} & 2 & $\P^1 \sqcup $ genus 2 with $b_2$ a Weierstrass point\\
  \ref{c2:even3} & 3 & $\P^1 \sqcup $ genus 1 with $a_1+a_2 = 2b_2$\\
  \ref{c2:even3} & 4 & Any moduli\\
  \ref{c2:even4}, \ref{c2:even5} & -- & Any moduli\\
  \ref{c2:even6} & -- & Genus 1 with $a-b$ two-torsion\\
  \ref{c2:even7} & 1 & Hyperelliptic genus 3 with any 2 points\\
  \ref{c2:even7} & 2 & Genus 2 with $a$ a Weierstrass point\\
  \ref{c2:even7} & 3,4 & Any moduli\\
  \ref{c2:even7.5} & 1 & Plane quartic with $2a_1+2a_2$ a canonical divisor \\
  \ref{c2:even7.5} & 2 & Genus 2 with $b$ a Weierstrass point\\
  \ref{c2:even7.5} & 3,4 & Any moduli\\
  \ref{c2:even8} & 1 & Plane quartic with the line joining the two points tangent at a third\\
  \ref{c2:even8} & 2,3,4 & Any moduli\\
  \ref{c2:even9} & -- & Any moduli
\end{longtable}
\end{center}

\subsubsection{Putting the components together}
Having described $C_{\rm tail} \to P_{\rm tail}$, $C_1 \to P_1$, and $C_2 \to P_2$ individually, we now put them together.
Let us first consider the case where $G \to P_{\rm tail}$ is trivial.
Recall that in this case $C_{\rm tail}$ is a disjoint union of two double covers $C^1_{\rm tail}$ and $C^2_{\rm tail}$ of $P_{\rm tail}$.
The dual graph of the coarse space of $C = C_1 \cup C^1_{\rm tail} \cup C^2_{\rm tail} \cup C_2$ has the following form 
\begin{equation}\label{eqn:form7}
  \begin{tikzpicture}[yscale=.6, baseline]
    \draw
    (-1,0) node[curve] (1) {$\coarse{C_1}$}
    (1, 0) node[curve] (2) {$\coarse{C_2}$}
    (0,1) node[curve] (t1) {$\coarse{C^1_{\rm tail}}$}
    (0,-1) node[curve] (t2) {$\coarse{C^2_{\rm tail}}$}
    (t1) edge[dashed, bend right=40] (1) edge[dashed, bend left=40] (2)
    (t2) edge[dashed, bend left=40] (1) edge[dashed, bend right=40] (2)
    ;
  \end{tikzpicture}.
\end{equation}
Here a dashed line represents one or two nodes with the following admissibility criterion: In the case of one node,
the node point is a ramification point of the map to $\coarse P$ on both curves.
In the case of two nodes, the two node points are unramified points in a fiber of the map to $\coarse P$ on both curves.
The convention for drawing points of $A$ (resp. $B$) on top (resp. bottom) for $C_1$ and $C_2$ still applies, except that the $A/B$ for $C_1$ and $A/B$ for $C_2$ may be switched.
Note that $C$ comes embedded in a surface $S$ fibered over $P$ obtained by gluing the fibration $S_1 \to P_1$, the fibration $S_2 \to P_2$, and the fibration $S_{\rm tail} \to P_{\rm tail}$ (see \autoref{fig:7}).
We can determine the parity of $f \from S \to P$ using \autoref{thm:section_parity}.
We produce a section of $S \to P$ by gluing sections of $S_i \to P_i$ and of $S_{\rm tail} \to P_{\rm tail}$.
We have recorded the self-intersections of the sections $\sigma_i$ of $S_i \to P_i$ (modulo 2).
Consider a section $\sigma_{\rm tail}$ of $S_{\rm tail} \to P_{\rm tail}$ that matches with $\sigma_i$ over $x_i$ and lies in the smooth locus of $S_{\rm tail} \to P_{\rm tail}$. 
Such a section is a section of the $\P^1$ bundle $S^1_{\rm tail} \to P_{\rm tail}$ or $S^2_{\rm tail} \to P_{\rm tail}$, say the first.
Then the self-intersection of $\sigma_{\rm tail}$ (modulo 2) is $b_1/2$, where $b_1$ is the number of ramification points of $C^1_{\rm tail} \to P_{\rm tail}$.
We then get
\[ \sigma^2 = \sigma_1 ^2 + \sigma_2^2 + \sigma_{\rm tail}^2.\]
The parity of $\sigma^2$ determines the parity of $f \from S \to P$ by \autoref{thm:section_parity}.
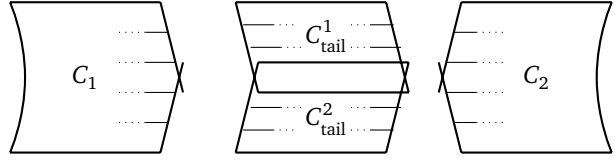
\begin{figure}
  \centering
  \begin{tikzpicture}
    \draw[thick, smooth, tension=1]
    plot coordinates {(-1,1) (-.8, 0) (-1,-1) }
    plot coordinates {(1,1) (1.3,-0.2) }
    plot coordinates {(1,-1) (1.3,0.2) }
    plot coordinates {(-1,1) (1,1) }
    plot coordinates {(-1,-1) (1,-1) }
    ;
    \draw
    (1.1, .6) edge (.8, .6) (.8, .6) edge[dotted] (.4, .6)
    (1.2, .2) edge (.8, .2) (.8, .2) edge[dotted] (.4, .2)
    (1.1, -.6) edge (.8, -.6) (.8, -.6) edge[dotted] (.4, -.6)
    (1.2, -.2) edge (.8, -.2) (.8, -.2) edge[dotted] (.4,-.2)
    ;
    \draw
    (0,0) node {$C_1$};

    \begin{scope}[shift={(6,0)}, xscale=-1]
    \draw[thick, smooth, tension=1]
    plot coordinates {(-1,1) (-.8, 0) (-1,-1) }
    plot coordinates {(1,1) (1.3,-0.2) }
    plot coordinates {(1,-1) (1.3,0.2) }
    plot coordinates {(-1,1) (1,1) }
    plot coordinates {(-1,-1) (1,-1) }
    ;
    \draw
    (1.1, .6) edge (.8, .6) (.8, .6) edge[dotted] (.4, .6)
    (1.2, .2) edge (.8, .2) (.8, .2) edge[dotted] (.4, .2)
    (1.1, -.6) edge (.8, -.6) (.8, -.6) edge[dotted] (.4, -.6)
    (1.2, -.2) edge (.8, -.2) (.8, -.2) edge[dotted] (.4,-.2)
    ;
    \draw
    (0,0) node {$C_2$};
  \end{scope}
  
    \begin{scope}[shift={(3,0)}]
      \draw[thick]
      plot coordinates {(-1,1) (-0.7,-0.2) }
      plot coordinates {(-1,-1) (-0.7,0.2) }
      plot coordinates {(1,1) (1.3,-0.2) }
      plot coordinates {(1,-1) (1.3,0.2) }
      plot coordinates {(-1,1) (1,1) }
      plot coordinates {(-1,-1) (1,-1) }
      plot coordinates {(-0.7,-0.2) (1.3,-0.2) }
      plot coordinates {(-0.7,0.2) (1.3,0.2) };
      \draw
      (1.1, .7) edge (.8, .7) (.8, .7) edge[dotted] (.5, .7)
      (1.2, .4) edge (.8, .4) (.8, .4) edge[dotted] (.5, .4)
      (1.1, -.7) edge (.8, -.7) (.8, -.7) edge[dotted] (.5, -.7)
      (1.2, -.4) edge (.8, -.4) (.8, -.4) edge[dotted] (.5,-.4)

      (-.9, .7) edge (-.5, .7) (-.5, .7) edge[dotted] (-.2, .7)
      (-.8, .4) edge (-.4, .4) (-.4, .4) edge[dotted] (-.1, .4)
      (-.9, -.7) edge (-.5, -.7) (-.5, -.7) edge[dotted] (-.2, -.7)
      (-.8, -.4) edge (-.4, -.4) (-.4, -.4) edge[dotted] (-.1,-.4)

      (0.2, .55) node {$C_{\rm tail}^1$}
      (0.2, -.55) node {$C_{\rm tail}^2$}
      ;
    \end{scope}
  \end{tikzpicture}
  \caption{A sketch of $C_1 \subset S_1$, $C_{\rm tail} \subset S_{\rm tail}$, and $C_2 \subset S_2$ as in type (7)}\label{fig:7}
\end{figure}

For example, taking the curve $C_1$ as in $\ref{c1:41}$, the curve $C_2$ as in $\ref{c2:odd2}$, the curve $C^1_{\rm tail}$ of genus $0$, and the curve $C^2_{\rm tail}$ of genus $p+1$ gives the following instance of \eqref{eqn:form7}.
\[
\begin{tikzpicture}[yscale=0.75]
  \draw
  (-1.5,0) node[curve, label={180:0}] (c11) {}
  (-1,0) node[curve, label={180:0}] (c12) {}
  (-.5,0) node[curve, label={180:0}] (c13) {}
  
  (0, 1) node[curve, label={-90:0}] (t1) {}
  (0, -1) node[curve, label={-90:$p+1$}] (t) {}

  (1,0) node[curve, label={180:$3-p$}] (c2) {}
  
  (c2) edge[bend left=30] (t) edge[bend left=60] (t) edge[bend right=45] (t1)
  (t1) edge[bend right=40] (c11)
  (t1) edge[bend right=30] (c12)
  (t) edge [bend left=40] (c12)
  (t) edge [bend left=30] (c13);
\end{tikzpicture}
\]
The resulting $S \to P$ admits a section with self-intersection $p+1 \pmod 2$ and hence represents a divisor of $\orb Q^{\rm odd}$ for even $p$.
For $p = 0$, we get the divisor~\eqref{7:41} in \autoref{thm:7}.
For $p = 2$, we get a codimension 2 locus.

We similarly take all possible combinations of $C_1$, $C_2$, and $C_{\rm tail}$, compute the stable images (see \autoref{tab:71}), and do a dimension count to see which ones give divisors.
The combinations not shown in the \autoref{tab:71} correspond to boundary divisors of $\orb Q^{\rm odd}$ whose images in $\overline Q$ have codimension higher than one.
A prime (') denotes the dual graph obtained by a vertical flip (that is, by switching $A$ and $B$).

\begin{table}[ht]
  \centering
  \rowcolors{2}{gray!15}{white}
  \begin{tabular}{ccrrc}
    \hline
    \rowcolor{gray!25}
    $C_1$ & $C_2$& $g\left(C^1_{\rm tail}\right)$& $g\left(C^2_{\rm tail}\right)$ & Divisor in \autoref{thm:7}\\
    \hline

    \ref{c1:31} or \ref{c1:41}  & \ref{c2:odd2} $p = 0$ & $0$ & $1$ & \ref{7:41}\\
    \ref{c1:41} & \ref{c2:odd2} $p = 0$ & $2$ & $-1$ & \ref{7:31}\\
    \ref{c1:31} or \ref{c1:41} & \ref{c2:odd3} $p = 0$ & $2$ & $-1$ & \ref{7:21}\\
    \ref{c1:31} or \ref{c1:41} & \ref{c2:odd3} $p = 1$ & $3$ & $-1$ & \ref{7:22} \\
    \ref{c1:31} or \ref{c1:41} & \ref{c2:odd3} $p = 2$ & $4$ & $-1$ & \ref{7:23} \\
    \ref{c1:31} or \ref{c1:41} & \ref{c2:odd3} $p = 2$ & $0$ & $3$ & \ref{7:31}\\
    
    \ref{c1:31} or \ref{c1:41} & \ref{c2:even3} $p = 1$ & $2$ & $-1$ & \ref{7:33}\\
    \ref{c1:31} or \ref{c1:41} & \ref{c2:even3} $p = 2$ & $3$ & $-1$ & \ref{7:33} \\
    \ref{c1:31}'or \ref{c1:41}  & \ref{c2:even3} $p = 2$ & $-1$ & $3$ & \ref{7:31} \\
    \ref{c1:41} & \ref{c2:even3} $p = 4$ & $5$ & $-1$ & \ref{7:11} \\
    \ref{c1:41} &  \ref{c2:even5} & $5$ & $-1$ & \ref{7:11} \\
    \ref{c1:31} or \ref{c1:41}  & \ref{c2:even7} $p = 1$ & $0$ & $2$ & \ref{7:31} \\
  \end{tabular}
  \caption{Divisors of type (7) with trivial $G \to P_{\rm tail}$}
  \label{tab:71}
\end{table}

Let us now consider the case where $G \to P_{\rm tail}$ is nontrivial.
In this case $\coarse G$ is a rational curve and $\coarse{C_{\rm tail}}$ is its double cover.
The dual graph of the coarse space of $C = C_1 \cup C_{\rm tail} \cup C_2$ has the following form 
\[
\begin{tikzpicture}[xscale=1.25]
  \draw
  (-1,0) node[curve] (1) {$\coarse{C_1}$}
  (1, 0) node[curve] (2) {$\coarse{C_2}$}
  (0,0) node[curve] (t) {$\coarse{C_{\rm tail}}$}
  (t) edge[dashed] (1) edge[dashed] (2)
  ;
\end{tikzpicture}
\]
Again, the dashed lines represent either a node or two nodes with the same admissibility criterion as before.
The curve $C$ comes embedded in a surface $S$ fibered over $P$ with the intriguing feature that the piece $S_{\rm tail} \to P_{\rm tail}$ is a $\P^1 \cup \P^1$ bundle with non-trivial monodromy of the two components.
In this case, we do not know how to determine the parity of $(S, C)$.
But as far as the image of $\coarse C$ in $\overline{\orb M}_6$ is concerned, the question is moot by the following observation.
\begin{proposition}\label{thm:hard_parity_case_moot}
  Suppose $\phi \from \orb P \to \orb Z$ is a generic point in a boundary component of $\orb Q$ of type (7) such that the intermediate cover $G \to P_{\rm tail}$ is nontrivial.
  Then there exists a  $\phi' \from \orb P \to \orb Z$ in a boundary component of $\orb Q$ of type (7) of opposite parity which maps to the same point in $\overline{\orb M}_6$ as $\phi$. 
\end{proposition}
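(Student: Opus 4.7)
The plan is to produce an explicit parity-flipping modification of $\phi$ whose effect on the stabilized curve is trivial. The right framework is the Recillas correspondence: by \autoref{prop:recillas}, the cover $\phi$ lying over the underlying trigonal map $\psi \from \orb P \to \widetilde{\orb M}_{0,1+3}$ is equivalent to the choice of a norm-trivial 2-torsion line bundle $\orb L$ on $\orb D_\psi$, and the bijection \eqref{eqn:theta_bijection} reads off the parity of the ambient component of $\orb Q$ from the parity of the induced limit theta characteristic $\theta$ on $\orb D_\psi$.

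First we locate a node of $\orb D_\psi$ where a parity-flipping twist is available. By \eqref{eqn:DEP}, the restriction of $\orb D_\psi$ to $\orb P^{\rm tail}$ is the disjoint union $\orb P^{\rm tail} \sqcup \orb E_\psi$, and the hypothesis that $G \to P_{\rm tail}$ is nontrivial forces $\orb E_\psi \to \orb P^{\rm tail}$ to be a connected double cover (by \autoref{thm:CGP_action}, the orbinode automorphism acts nontrivially on $\orb L$ at the orbinodes sitting on the $\orb P^{\rm tail}$ side). Combining with \autoref{thm:limit_theta}, which pins down the action on $\theta$ at the $\orb P^{\rm tail}$ nodes, we may choose a node $y$ of $\orb D_\psi$ on the $\orb E_\psi$ component at which $\Aut_y$ acts trivially on $\theta$.

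Next we define $\phi'$ by twisting. Let $\epsilon_y$ be the 2-torsion line bundle on $\orb D_\psi$ obtained by pulling back the trivial bundle along the normalization at $y$ and regluing by $-1$. Because $y$ lies over a node of $\orb P$ whose two branches sit in the same component of $\orb D_\psi$, the product $\orb L' := \orb L \otimes \epsilon_y$ is again norm-trivial, so via \autoref{prop:recillas} it determines a tetragonal cover $\phi' \from \orb P \to \widetilde{\stack M}_{0,4}$ with the same underlying $\psi$. Applying \eqref{eqn:theta-switch} at $y$, the theta characteristic $\theta' = \theta \otimes \epsilon_y$ has $h^0$ differing from $h^0(\theta)$ by $\pm 1$, so $\phi'$ lies in the component of $\orb Q$ of opposite parity.

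Finally we check that $\phi$ and $\phi'$ stabilize to the same point of $\overline{\orb M}_6$. The twist $\epsilon_y$ is supported entirely over the interior of the tail, so the induced tetragonal curves agree after pullback to $\orb P^{\rm main}$, and their restrictions to $\orb P^{\rm tail}$ differ only by a regluing of the two sheets of $\widetilde{\orb C}_\phi \to \orb G_\phi$ at $y$; in particular, the ramification and branching data at every node joining $\orb P^{\rm main}$ to $\orb P^{\rm tail}$ is preserved. Since the components of the source living over the tail of $\orb Z$ are contracted under the composition $\orb Q \to \Maps(\widetilde{\stack M}_{0,4}) \to \overline{\orb M}_6$, the two stable limits agree. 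The main obstacle is this last verification, rigorously showing that a twist at an interior node $y \in \orb E_\psi$ does not propagate a monodromy change across the attaching nodes into the main component; this reduces to a local calculation using the explicit $\sym_4 = K \rtimes \sym_3$ decomposition and the concrete description of $\epsilon_y$ on the normalization.
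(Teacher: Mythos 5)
Your overall strategy---keep $\psi$ fixed, twist the Recillas line bundle $\orb L$ at a node by the $(\pm 1)$-gluing bundle $\epsilon$, and argue that the coarse stable image is unchanged---is the same as the paper's, but the decisive step, namely the choice of node and the verification that the twist flips parity, has a genuine gap. The parity of the component containing $\phi$ is the parity of $h^0(\orb D_\psi, \theta\otimes\orb L)$ (this is what \eqref{eqn:theta_bijection} says), not of $h^0(\theta)$, and \eqref{eqn:theta-switch} applies only at a node where the automorphism group acts \emph{trivially} on the fiber of the theta characteristic being twisted, i.e.\ on $(\theta\otimes\orb L)_y$; if that action is nontrivial, one is in the situation of \eqref{eqn:theta-orbinode} and changing the gluing does not change $h^0$ at all. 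You assert, without justification, that there is a node $y$ on the $\orb E_\psi$ component where $\Aut_y$ acts trivially on $\theta_y$---but \autoref{thm:limit_theta} only controls the nodes on the $\orb P^{\rm tail}$ component---and you never examine the action on $\orb L_y$, so the hypothesis needed for \eqref{eqn:theta-switch} is not established. Your use of the hypothesis that $G \to P_{\rm tail}$ is nontrivial is also off target: it does not imply that $\orb E_\psi \to \orb P_{\rm tail}$ is connected (the paper explicitly cautions that $\orb G_\phi$ and $\orb E_\psi$ are \emph{different} double covers), and \autoref{thm:CGP_action} concerns the action on $\orb L$ at points of the $\orb P$-component of $\orb D_\psi$, not of $\orb E_\psi$.

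The paper uses the nontriviality of $G \to P_{\rm tail}$ exactly where you do not: take $x$ to be the node on the $\orb P^{\rm tail}$ component over $\orb P_1 \cap \orb P_{\rm tail}$. By \autoref{thm:CGP_action} the nontriviality of $G$ forces $\Aut_x$ to act by $-1$ on $\orb L_x$, and \autoref{thm:limit_theta} gives the action by $-1$ on $\theta_x$; hence the action on $(\theta\otimes\orb L)_x$ is trivial, \eqref{eqn:theta-switch} applies, and $\orb L\otimes\epsilon_x$ has opposite parity. (Norm-triviality of $\orb L\otimes\epsilon_x$ holds because the node of $\orb P$ under $x$ is separating, not for the reason you give.) Finally, the step you yourself flag as the main obstacle and defer to an unspecified local computation is where this choice of node pays off: twisting at $x$ changes only the manner in which $C_1$ is attached to the rest of the curve, and on coarse spaces any attaching choice yields the same stable curve, so no argument about monodromy propagating into the main component is needed. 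As written, your proposal does not establish the parity flip.
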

\begin{proof}
  Let $\psi \from \orb P \to \widetilde{\orb M}_{0,1+3}$ be the map induced by $\phi$ and $\orb D \to \orb P$ the associated triple cover.
  By \eqref{eqn:DEP}, we have $\orb D_{\rm tail} = \orb P_{\rm tail} \sqcup \orb E_\psi$.
  The data of $\phi$ gives a norm-trivial two-torsion line bundle $\orb L$ on $\orb D$.
  Let $x \in \orb P_{\rm tail} \subset \orb D_{\rm tail}$ be the point over the node $\orb P_{\rm tail} \cap \orb P_1$.
  Since $G \to P_{\rm tail}$ is non-trivial, by \autoref{thm:CGP_action} we get that $\Aut_x \orb D_{\rm tail}$ acts by $-1$ on $\orb L_x$.
  Let $\theta$ be the limiting theta characteristic on $\coarse{\orb D}$.
  By \autoref{thm:limit_theta}, the action of $\Aut_x \orb D_{\rm tail}$ on $\theta_x$ is also by $-1$.
  Note that the parity of $\phi$ is the parity of $h^0(\orb D, \theta \otimes \orb L)$.
  Let $\hat {\orb D} \to \orb D$ be the normalization at $x$ and let $x_1$, $x_2$ be the two points of $\hat{\orb D}$ over $x$.
  Let $\epsilon_x$ be the two-torsion line bundle on $\orb D$ obtained by taking the trivial line bundle on $\hat{\orb D}$ and gluing the fibers over $x_1$ and $x_2$ by $-1$.
  Let $\phi' \from \orb P \to \orb Z$ correspond to the same $\psi$ but the norm-trivial two-torsion line bundle $\orb L \otimes \epsilon_x$.
  By \eqref{eqn:theta-switch}, $\phi'$ has the opposite parity as $\phi$.
  The difference in the curve $C$ for $\phi$ and $\phi'$ is only in the manner of attaching $C_1$ to the rest of the curve.
  But on the level of coarse spaces, any choice leads to the same stable curve.
\end{proof}

We take all possible combinations of $C_1$, $C_2$, and $C_{\rm tail}$ and compute the stable images (see \autoref{tab:72}).
The combinations not shown in \autoref{tab:72} give loci of codimension higher than one.
The proof of \autoref{thm:7} is thus complete.
\begin{table}[hb]
  \centering
  \rowcolors{2}{gray!15}{white}
  \begin{tabular}{ccrc}
    \hline
    \rowcolor{gray!25}
    $C_1$ & $C_2$& $g\left(C_{\rm tail}\right)$& Divisor in \autoref{thm:7}\\
    \hline
    \ref{c1:32} or \ref{c1:42} & \ref{c2:odd4} ($p = 0$) & 2 & \ref{7:21}\\
    \ref{c1:32} or \ref{c1:42} & \ref{c2:odd4} ($p = 1$) & 3 & \ref{7:22}\\
    \ref{c1:32} or \ref{c1:42} & \ref{c2:odd4} ($p = 2$) & 4 & \ref{7:23}\\
    \ref{c1:32} or \ref{c1:42} & \ref{c2:even8} ($p = 1$) & 2 & \ref{7:32}\\     \ref{c1:32} or \ref{c1:42} & \ref{c2:even8} ($p = 2$) & 3 & \ref{7:33}\\
  \end{tabular}
  \caption{Divisors of type (7) with non-trivial $G \to P_{\rm tail}$}
  \label{tab:72}
\end{table}

\subsection{Divisors of type (8)}\label{sec:8}
\begin{proposition}\label{thm:8}
  There are $2$ irreducible components of $\overline Q \cap \Delta$ which are images of divisors of type (8) in $\orb Q_6^{\rm odd}$.
  Their generic points correspond to the following stable curves:

  \begin{itemize}
  \item With the dual graph
    \tikz{
      \draw 
      (0,0) node (X) [curve, label={180:$X$}] {} 
      (X) edge [loop right] (X);
    }
    \begin{enumerate}
    \item  $X$ hyperelliptic of genus $5$.
      \label{8:1}
    \end{enumerate}

  \item With the dual graph
    \tikz[baseline={(0,-0.1)}]{
      \draw 
      (0,0) node (X) [curve, label={180:$X$}] {}
      (1,0) node (Y) [curve, label={0:$Y$}] {}
      (X) edge [bend right=45] (Y)
      (X) edge (Y)
      (X) edge [bend left=45] (Y);
    }
    \begin{enumerate}[resume]
    \item $X$ hyperelliptic of genus $3$, $Y$ of genus $1$.
      \label{8:2}
    \end{enumerate}
  \end{itemize}
\end{proposition}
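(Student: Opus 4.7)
The plan is to mimic the analyses of \autoref{sec:6} and \autoref{sec:7} essentially verbatim, adjusting only for the combinatorics specific to type (8). Let $\phi \from \orb P \to \orb Z$ be a generic point of a boundary divisor of type (8), let $(\widetilde S, \widetilde C) \to \orb P$ be the pullback of the universal 4-pointed family, $\orb P \to P$ the coarse space away from the nodes, and $f \from (S, C) \to P$ the family obtained by the blow-up blow-down construction. Now $P$ has four components: three main components $P_1, P_2, P_3$ and a tail $P_{\rm tail}$, with $P_{\rm tail}$ meeting $P_a$ at a node $x_a$ whose edge label is $i$, $j$, or $k$ (each in $\{1,2,3,4\}$). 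Set $C_a = f^{-1}(P_a)$, $C_{\rm tail} = f^{-1}(P_{\rm tail})$, and similarly for the surfaces.

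Next, I would analyse each piece exactly as in \autoref{sec:7}. For the tail, the \'etale double cover $\orb G \to \orb P_{\rm tail}$ of \eqref{eqn:CGP} descends to an \'etale double cover $G \to P_{\rm tail}$, which may be trivial or non-trivial (since $P_{\rm tail}$ has three orbipoints rather than two). In the trivial case $C_{\rm tail}$ splits as $C^1_{\rm tail} \sqcup C^2_{\rm tail}$, two double covers of $P_{\rm tail}$; the parity of $f \from S \to P$ is then read off from a section of $S \to P$ via \autoref{thm:section_parity}, exactly as in \autoref{sec:7}. In the non-trivial case, the argument of \autoref{thm:hard_parity_case_moot} (via \autoref{thm:CGP_action} and \autoref{thm:limit_theta}) applies word for word and sidesteps the parity question. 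For each main component $P_a$, \autoref{thm:An_singularities} realises $\coarse{C_a}$ as the normalisation of a fiberwise degree 4 curve $C_a'$ of class $4\sigma + m_a F$ on a $\P^1$-bundle $S_a'$ over an orbifold disk; the admissible values of $m_a$ are constrained by \autoref{prop:a bound} and \autoref{prop:orbiscroll_4}, and the classical identification of each $C_a$ as a marked curve of known type is recovered from the resolution-contraction diagrams of \autoref{sec:1-5}.

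With the lists of options for $C_1, C_2, C_3$ and $C_{\rm tail}$ in hand, I would assemble each admissible tuple into the stable image $C \subset \overline{\orb M}_6$, organising the outcomes in a table in the style of \autoref{tab:71} and \autoref{tab:72}. A dimension count then discards every combination whose image in $\overline Q$ has codimension at least two, and a parity check via \autoref{thm:section_parity} retains only those belonging to $\orb Q^{\rm odd}$.

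The hard part will be combinatorial bookkeeping rather than any new geometric obstruction: with three edge labels each ranging over $\{1,2,3,4\}$ and three independent main components, the case list is substantially longer than for type (7). However, in essentially every case at least one $C_a$ admits a positive-dimensional family of automorphisms preserving its attachment data, or contributes no moduli at all, so the image in $\overline{\orb M}_6$ drops below codimension one. I expect only the two divisors \ref{8:1} and \ref{8:2} to survive: the first from configurations in which $C_{\rm tail}$ carries almost all the moduli and absorbs the three main components into rational bridges, producing a hyperelliptic genus 5 curve; the second from the triply-connected configuration in which $C_{\rm tail}$ is an elliptic curve joined thrice to a hyperelliptic curve of genus 3 assembled from the three main components.
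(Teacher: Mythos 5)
Your plan is essentially the paper's own proof: the paper likewise reuses the type-(7) machinery wholesale, noting that since each main component $\orb P_a$ has degree $6$ the possible $C_a$ are exactly the curves \ref{c1:11}--\ref{c1:42} already listed for $C_1$ in \autoref{sec:7}, splitting into the trivial/non-trivial cases of $G \to P_{\rm tail}$, and invoking \autoref{thm:section_parity} and the argument of \autoref{thm:hard_parity_case_moot} for parity. The only thing you leave undone is the actual enumeration (the paper finds that, up to renumbering, only $C_1=C_2=\text{\ref{c1:41}}$ with $C_3\in\{\text{\ref{c1:31}},\text{\ref{c1:41}}\}$ survives in the trivial case, giving \ref{8:1} and \ref{8:2}, and $C_1,C_2\in\{\text{\ref{c1:32}},\text{\ref{c1:42}}\}$, $C_3=\text{\ref{c1:41}}$ in the non-trivial case, giving \ref{8:1} again); your heuristic description of the surviving configurations is right in spirit, except that in divisor \ref{8:2} both the genus-$3$ and genus-$1$ pieces are the two halves of $C_{\rm tail}$, with the three rational main-component curves contracting to the three connecting nodes.
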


\begin{proof}
Recall that type (8) corresponds to $\phi \from \orb P \to \orb Z$ with the following dual graph.
\[
\begin{tikzpicture}[baseline=(C2)]
  \draw
  (0,0) node[curve, double] (P1) {$\orb Z_1$}
  +(150:.6) node[int]  {$1$} edge (P1)
  +(210:.6) node[int]  {$0$}  edge (P1)
  (1,0) node[curve] (P2) {$\orb Z_{\rm tail}$}
  +(0:.6) node[int]  {$\infty$}  edge (P2);
  \begin{scope}[shift={(0,.8)}]
    \draw
    (0,0) node[curve, label={180:6}] (C1) {$\orb P_1$}
    (0,.8) node[curve, label={180:6}] (C2) {$\orb P_2$}
    (0,1.6) node[curve, label={180:6}] (C3) {$\orb P_3$}

    (1,.8) node[curve, label={0:$i+j+k$}] (E) {$\orb P_{\rm tail}$};
  \end{scope};
  \draw
  (C2) edge node[int] {$j$} (E)
  (C3) edge node[int] {$k$} (E)
  (C1) edge node[int] {$i$} (E)
  (P1) edge (P2);
\end{tikzpicture}
\]
We have already developed all the tools to analyze this case in \autoref{sec:7}.
The cover $C_{\rm tail} \to P_{\rm tail}$ factors as $C_{\rm tail} \to G \to P_{\rm tail}$.
Since $P_{\rm tail}$ has at most 3 orbifold points, the cover $G \to P_{\rm tail}$ is either trivial or non-trivial.
In the trivial case, $C_{\rm tail}$ is the disjoint union $C^1_{\rm tail} \sqcup C^2_{\rm tail}$ of two double covers of $P_{\rm tail}$.
In the non-trivial case, $\coarse{G}$ is a rational curve and $\coarse{C_{\rm tail}}$ is its double cover.
The curves $C_i$ given by $\orb P_i \to \orb Z_1$ for $i = 1,2,3$ are enumerated as \ref{c1:11}--\ref{c1:42} in \autoref{sec:7}.

We take all possible combinations of $C_1$, $C_2$, $C_3$, and $C_{\rm tail}$ and compute the stable images.
The case of trivial $G \to P_{\rm tail}$ gives two divisors.
Up to renumbering the subscripts, these arise from $C_1 = C_2 = \ref{c1:41}$ and $C_3 = \ref{c1:31}$ or $\ref{c1:41}$.
The first has $\coarse{C^1_{\rm tail}}$ of genus $-1$ and $\coarse{C^2_{\rm tail}}$ of genus 5 and it gives divisor~\eqref{8:1}.
The second has $\coarse{C^1_{\rm tail}}$ of genus 1 and $\coarse{C^2_{\rm tail}}$ of genus 3 and it gives divisor~\eqref{8:2}.
All other combinations give loci of codimension higher than one.

The case of non-trivial $G \to P_{\rm tail}$ gives one divisor.
By renumbering the subscripts if necessary, say that the non-trivial monodromy of $G \to P_{\rm tail}$ is at the node $x_1 = P_{\rm tail} \cap P_1$ and $x_2 = P_{\rm tail} \cap P_2$.
Then taking  $C_1$ and $C_2$ from $\{\text{\ref{c1:32}}, \text{\ref{c1:42}}\}$ and $C_3 = \text{\ref{c1:41}}$ gives divisor~\eqref{8:1}.
All other combinations give loci of codimension higher than one.
Note that the question of parity is moot in this case by the same argument as in \autoref{thm:hard_parity_case_moot}.
The proof of \autoref{thm:8} is now complete.
\end{proof}

\appendix
\section{Linear series on orbifold scrolls}\label{sec:orbiscrolls}
Let $\orb P$ be the orbifold curve $\P^1(\sqrt[r] 0)$, which has one orbifold point with stabilizer $\Z_r$ over $0$.
The goal of this section is to describe $\P^1$ bundles over ${\orb P}$, their coarse spaces, and linear series on them.
We recall the following standard facts about ${\orb P}$ (see \cite[Section~2]{mar.tha:12}).
\begin{proposition}Let ${\orb P} = \P^1(\sqrt[r] 0)$.
  \begin{enumerate}
  \item Every $\P^1$-bundle over ${\orb P}$ is the projectivization of a rank two vector bundle.
  \item Every vector bundle on ${\orb P}$ is the direct sum of line bundles.
  \item The line bundles on ${\orb P}$ are of the form $\O_{\orb P}(a)$ for $a \in \frac{1}{r} \Z$, where $\O_{\orb P}(1/r)$ refers to the dual of the ideal sheaf of the unique (reduced) orbifold point on ${\orb P}$.
  \end{enumerate}
\end{proposition}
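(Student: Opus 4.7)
The plan is to prove (3), then (2), then (1), with each statement feeding into the next.

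For (3), I would use the root-stack presentation of $\orb P$ directly. By construction there is a tautological line bundle $\mathcal T$ on $\orb P$ equipped with a canonical isomorphism $\mathcal T^{\otimes r} \cong \O_{\P^1}([0])$. Setting $\O_{\orb P}(1/r) := \mathcal T$ and pulling back $\O_{\P^1}(1)$ yields a line bundle $\O_{\orb P}(a)$ for every $a \in \tfrac{1}{r}\Z$. To show these exhaust $\Pic(\orb P)$, I would invoke the standard short exact sequence
\[ 0 \to \Pic(\P^1) \to \Pic(\orb P) \to \Z/r \to 0 \]
coming from the root-stack construction (the right-hand map records the $\mu_r$-weight on the fiber at the orbifold point), which is split by $1 \mapsto \O_{\orb P}(1/r)$, identifying $\Pic(\orb P) \cong \tfrac{1}{r}\Z$.

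For (2), I would run Grothendieck's splitting argument. Let $\orb E$ be a vector bundle of rank $n \geq 2$, and choose $a \in \tfrac{1}{r}\Z$ maximal such that $H^0(\orb P, \orb E \otimes \O_{\orb P}(-a)) \neq 0$. A nonzero section gives a line subbundle $\O_{\orb P}(a) \hookrightarrow \orb E$, and maximality of $a$ together with (3) ensures this subbundle is saturated, so the quotient $\orb F$ is locally free of rank $n-1$. By induction $\orb F$ splits as a sum of line bundles of slope strictly less than $a$, and the requisite $\Ext^1$-vanishing reduces to checking $H^1(\orb P, \O_{\orb P}(c)) = 0$ for $c > -1$, which follows from a direct \v{C}ech computation on the two-chart cover of $\orb P$ by $\A^1$ (away from $0$) and $[\A^1/\mu_r]$ (near $0$). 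The short exact sequence then splits and the induction closes.

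For (1), the task is to show that the obstruction to realizing a $\P^1$-bundle $\orb S \to \orb P$ as $\P(\orb V)$ vanishes, i.e.\ that $H^2(\orb P, \Gm) = 0$. I would argue this via an open-closed / Mayer--Vietoris decomposition of $\orb P$ into $\P^1 \setminus \{0\}$ and an \'etale neighborhood of the residual gerbe $B\mu_r$, reducing to the vanishing of $H^2(\P^1, \Gm)$ (Tsen) and of $H^2(B\mu_r, \Gm)$ (direct computation, since every $\P^1$-bundle over $B\mu_r$ is the projectivization of a two-dimensional $\mu_r$-representation). The main obstacle is exactly this assembly: Brauer group computations on Deligne--Mumford stacks require care, and one must verify that no extra obstruction arises on the overlap. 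As a hands-on fallback, I would instead use (2) to trivialize $\orb S$ as a projectivization on each of the two charts and resolve the resulting $\PGL_2$-cocycle by hand, using the triviality of the Picard group of a punctured formal disc to lift the transition function to $\GL_2$.
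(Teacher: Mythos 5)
The paper does not actually prove this proposition: it is introduced with ``We recall the following standard facts about $\orb P$'' and used without further justification, so there is no argument in the text to compare yours against. Your proposal supplies the standard proofs, and the architecture (3)~$\Rightarrow$~(2)~$\Rightarrow$~(1) is the right one; two points need repair before it is watertight. First, in (3) the sequence $0 \to \Pic(\P^1) \to \Pic(\orb P) \to \Z/r \to 0$ does \emph{not} split: granting the statement being proved, $\Pic(\orb P) \cong \tfrac{1}{r}\Z$ is torsion-free, so $\Z/r$ is not a direct summand. What is true, and all you need, is that $\O_{\orb P}(1/r)$ maps to a generator of $\Z/r$, so any line bundle can be twisted by a power of $\O_{\orb P}(1/r)$ until the $\mu_r$-weight at the stacky point is trivial, after which it descends to $\P^1$ and lands in $\Z\cdot\O(1) = \Z\cdot\O_{\orb P}(1/r)^{\otimes r}$; this yields $\Pic(\orb P) = \tfrac{1}{r}\Z$ with no splitting claim. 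Second, in (2) the assertion that the quotient $\orb F$ splits into line bundles ``of slope strictly less than $a$'' is neither justified nor quite correct (consider $\orb E = \O(a)^{\oplus 2}$); the step actually needed is the bound $b_i \le a$ for every summand $\O(b_i)$ of $\orb F$, which follows from the exact sequence
\[ H^0\bigl(\orb E(-a - 1/r)\bigr) \to H^0\bigl(\orb F(-a-1/r)\bigr) \to H^1\bigl(\O_{\orb P}(-1/r)\bigr), \]
whose outer terms vanish by maximality of $a$ and by $c_*\O_{\orb P}(-1/r) = \O_{\P^1}(-1)$ respectively. With that bound, $\Ext^1(\orb F, \O(a)) = \bigoplus H^1(\O_{\orb P}(a-b_i)) = 0$ and the induction closes as you say. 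For (1), your ``hands-on fallback'' is the argument to run rather than the fallback: cover $\orb P$ by $\A^1 = \P^1\setminus\{0\}$ and $[\A^1/\mu_r]$; on the first chart a $\P^1$-bundle is a projectivization by Tsen, on the second a $\mu_r$-equivariant $\P^1$-bundle over $\A^1$ is $\P^1\times\A^1$ with a fiberwise $\mu_r\to\PGL_2$ that lifts to $\GL_2$ because $\mu_r$ is cyclic and $\C^*$ is divisible; then the $\PGL_2$-valued transition function on the punctured overlap lifts to $\GL_2$ since the relevant Picard group vanishes. The Brauer-group route also terminates, since $H^2(\mu_r,\C^*) = \C^*/(\C^*)^r = 0$, but as you note the patching there needs more care than the explicit cocycle argument.
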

Let $c \from {\orb P} \to \P^1$ be the coarse space map.
Note that $c^*\O_{\P^1}(a) = \O_{\orb P}(a)$ for $a \in \Z$ and $c_* \O_{\orb P}(a) = \O_{\P^1}(\lfloor a \rfloor)$ for $a \in \frac1r \Z$.
For $a > 0$ in $\frac{1}{r} \Z$, set $\F_a = \proj(\O_{\orb P} \oplus \O_{\orb P}(-a))$.
The tautological line bundle $\O_{\F_a}(1)$ on $\F_a$ has a unique section.
We denote its zero locus by $\sigma$ and call it the \emph{directrix}.
It is the unique section of $\F_a \to {\orb P}$ with negative self intersection $\sigma^2 = -a$.
It corresponds to the projection $\O_{\orb P} \oplus \O_{\orb P}(-a) \to \O_{\orb P}(-a)$.
There are sections $\tau$ disjoint from $\sigma$ corresponding to projections $\O_{\orb P} \oplus \O_{\orb P}(-a) \to \O_{\orb P}$.
These $\tau$ lie in the divisor class $\sigma + a F$, where $F$ is the pullback of $\O_{\orb P}(1)$.
Observe that if $a$ is not an integer, then $\tau(0)$ is independent of the choice of $\tau$.
We call $\tau$ a \emph{co-directrix}.

\begin{proposition}\label{prop:orbisingularities}
  Retain the notation introduced above.
  If $a \in \Z$, then $\coarse{\F_a}$ is smooth and $\coarse{\F_a} \to \P^1$ is the $\P^1$-bundle $\proj (\O_{\P^1} \oplus \O_{\P^1}(-a))$.
  If $a \not \in \Z$, then $\coarse{\F_a}$ is smooth except at the two points $\sigma(0)$ and $\tau(0)$.
  At $\tau(0)$, it has the singularity $\frac{1}{r}(1, ra)$.
  At $\sigma(0)$, it has the singularity $\frac{1}{r}(1, r-ra)$.
  Furthermore, the scheme theoretic fiber of $\coarse{\F_a} \to \P^1$ over $0$ has multiplicity $r/\gcd(r,ra)$.
\end{proposition}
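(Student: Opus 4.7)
The proof is a purely local computation in an invariant chart around the unique orbifold point of $\orb P$. The plan is to present $\orb P$ near $0$ as $[\spec \C[\tilde t]/\mu_r]$ with $\zeta \cdot \tilde t = \zeta \tilde t$, so that the coarse space is $\spec \C[t]$ with $t = \tilde t^{\,r}$. The line bundle $\O_{\orb P}(-a)$ pulls back to the trivial line on $\spec \C[\tilde t]$ equipped with the $\mu_r$-character of weight $ra$ (so that its $r$-th tensor power is $\O_{\orb P}(-ra)$, consistent with the convention that the ideal sheaf of $0$ is $\O_{\orb P}(-1/r)$). With this, $\F_a$ locally is $[\spec \C[\tilde t]\times \P^1/\mu_r]$, with $\mu_r$ acting on $\P^1$ by $[x:y]\mapsto [x:\zeta^{ra} y]$. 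The directrix $\sigma$ is cut out by $y=0$ and a co-directrix $\tau$ by $x=0$, so their fibers over $0$ are precisely the two fixed points of the $\mu_r$-action on the central $\P^1$.

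Next I would handle the singularities. If $a\in\Z$, then $\zeta^{ra}=1$ and $\mu_r$ acts trivially on $\P^1$, so the local model is $\P^1\times [\spec\C[\tilde t]/\mu_r]$, whose coarse space $\P^1\times\spec\C[t]$ is smooth and globally gives $\proj(\O_{\P^1}\oplus \O_{\P^1}(-a))$. If $a\notin\Z$ the action on $\P^1$ is non-trivial with fixed points only at $\sigma(0)$ and $\tau(0)$. In the affine chart $u=y/x$ around $\sigma(0)$, the action reads $\zeta\cdot(\tilde t,u)=(\zeta\tilde t,\zeta^{ra}u)$, exhibiting $\frac{1}{r}(1,ra)$. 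In the chart $v=x/y$ around $\tau(0)$ it reads $\zeta\cdot(\tilde t,v)=(\zeta\tilde t,\zeta^{-ra}v)$, giving $\frac{1}{r}(1,r-ra)$. Away from these two points the stabilizer is trivial, hence the coarse space is smooth everywhere else.

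For the multiplicity assertion, work in the $\sigma$-chart. The scheme-theoretic fiber of $\coarse{\F_a}\to \P^1$ over $0$ is $\spec\C[\tilde t,u]^{\mu_r}/(\tilde t^{\,r})$, and its generic point is the localization at $u\neq 0$. In $\C[\tilde t,u,u^{-1}]^{\mu_r}$, the invariant monomials are the $\tilde t^{\,i}u^j$ with $r\mid i+j\,ra$. Because the image of the multiplication map $\mu_r\xrightarrow{\cdot ra}\mu_r$ has order $r/d$ with $d=\gcd(r,ra)$, the set of $i\ge 0$ that occur is precisely $d\,\Z_{\ge 0}$; in particular there exists $j_0$ with $w:=\tilde t^{\,d}u^{j_0}$ invariant, and this $w$ is a uniformizer at the generic point of the fiber since $u^{\pm 1}$ is a unit there. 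Writing $\tilde t^{\,r}=(\tilde t^{\,d})^{r/d}=w^{r/d}\cdot u^{-j_0 r/d}$ shows that $\tilde t^{\,r}$ has valuation $r/d$, so the fiber multiplicity is $r/\gcd(r,ra)$.

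The only subtle point, and the one I would double-check carefully, is the sign/weight bookkeeping in the identification of $\O_{\orb P}(-a)$ as a $\mu_r$-equivariant line bundle: this fixes the two characters appearing at $\sigma(0)$ and $\tau(0)$ and thus both singularity types. Once that is pinned down, the remaining content is the quotient-ring computation of the multiplicity, which is routine.
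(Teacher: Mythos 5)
Your proof is correct and follows essentially the same route as the paper's: a local presentation of $\F_a$ as a $\mu_r$-quotient near the orbifold fiber, reading off the cyclic quotient singularities at $\sigma(0)$ and $\tau(0)$ from the weights on the two affine charts, and an invariant-ring computation for the fiber multiplicity. Your multiplicity argument is in fact slightly more careful than the paper's terse version, and both compute the answer as $r/\gcd(r,ra)$ (the ``$n$'' in the statement is evidently a typo for $r$).
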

\begin{proof}
  Fix a generator $\zeta \in \mu_r$.
  In local coordinates around $0$, we can write ${\orb P}$ as 
  \[ [\spec \C[x] / \mu_r],\]
  where $\zeta$ acts by $x \mapsto \zeta x$.
  In these coordinates, we can trivialize $\O_{\orb P} \oplus \O_{\orb P}(-a)$ as a $\mu_r$ equivariant vector bundle with basis $\langle X, Y \rangle$ on which $\zeta$ acts by $X \mapsto X$ and $Y \mapsto \zeta^{ra} Y$.
  We think of $X$ and $Y$ as homogeneous coordinates on the projectivization.
  Then $\sigma$ corresponds to $X = 0$ and $\tau$ to $Y = 0$.
  Locally around $\sigma(0)$ we can write $\F_a$ as
  \[ [ \spec \C[x, X/Y] / \mu_r], \text{ where } \zeta \cdot (x, X/Y) = (\zeta x, \zeta^{r-ra} X/Y).\]
  Similarly, around $\tau(0)$ we can write $\F_a$ as
  \[ [ \spec \C[x, Y/X] / \mu_r], \text{ where } \zeta \cdot (x, Y/X) = (\zeta x, \zeta^{ra} Y/X).\]
  The claims about the singularities follow from these presentations.

  In either chart, invert the second coordinate, and let $m \in \Z$ be such that $r$ divides $mra + \gcd(r,ra)$.
  Then the invariant ring is generated by $u = x^{\gcd(r,ra)}(X/Y)^{-m}$.
  On the other hand, the invariant ring in $\C[x]$ is generated by $v = x^r$.
  Up to an invertible function, the preimage of $v$ is $u^{r/\gcd(r,ra)}$.
  The claim about the multiplicity follows.
\end{proof}

We now turn to linear systems on $\F_a$.
Let $\pi \from \F_a \to {\orb P}$ be the projection.
\begin{proposition}\label{prop:a bound}
  Let $\orb C \subset \F_a$ be a member of $|n \sigma + m F|$.
  Then $\deg \omega_{\orb C/ \orb P} = (n-1)(2m-an)$.
  If $\orb C$ does not pass through $\sigma(0)$, then $m-na$ is a non-negative integer.
  If $\orb C$ is \'etale over $0$, then at least one of $m-na$ or $m-(n-1)a$ is a non-negative integer.
  If $\orb C$ is smooth, then $m-na \geq 0$ or $m - na = -a$.
  In the former case, $\orb C$ is connected.
  In the latter case, $\orb C$ is the disjoint union of $\sigma$ and a curve in $|(n-1) \tau|$.
\end{proposition}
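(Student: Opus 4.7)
My plan is to run every part through the pushforward decomposition
\[
  \pi_*\O_{\F_a}(n\sigma + mF) \;=\; \bigoplus_{i=0}^{n} \O_{\orb P}(m - ia),
\]
which comes from $\Sym^n(\O \oplus \O(-a))$. Any section of $\O_{\F_a}(n\sigma + mF)$ may then be written $f = \sum_{i=0}^n s_i X^{n-i} Y^i$ with $s_i \in H^0(\orb P, \O(m-ia))$, where $X, Y$ are the generators of the two summands of $V = \O \oplus \O(-a)$; $\sigma$ is cut by $X$, and $\tau$ locally by $Y$. The elementary observation used throughout is that a section $s \in H^0(\orb P, \O(k))$ on $\orb P = \P^1(\sqrt[r]0)$ has $s(0) \neq 0$ at the orbifold point if and only if $k \in \Z_{\geq 0}$: the stalk $\O(k)|_0$ is the $\mu_r$-representation of weight $rk \pmod r$, which has a nonzero invariant vector exactly when $k$ is an integer.

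The degree formula is adjunction. The relative Euler sequence on $\F_a = \proj V$ gives $\omega_{\F_a/\orb P} = \pi^*\det V \otimes \O(-2) = \O(-2\sigma - aF)$, so $\omega_{\orb C/\orb P}$ has class $((n-2)\sigma + (m-a)F)|_{\orb C}$, and the intersection numbers $\sigma^2 = -a$, $\sigma \cdot F = 1$, $F^2 = 0$ give $\deg \omega_{\orb C/\orb P} = (n-1)(2m - an)$. For the non-passing-through assertion, $f|_\sigma = s_n Y^n$, so $\orb C$ misses $\sigma(0)$ iff $s_n(0) \neq 0$ iff $m - na \in \Z_{\geq 0}$. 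For the étale assertion, I work in local coordinates $(x,v)$ at $\sigma(0)$, where $x$ is a uniformizer on the $\mu_r$-cover of $\orb P$ and $v = X/Y$: the local equation is $\tilde f(x, v) = f/Y^n = s_n(x) + s_{n-1}(x)v + \cdots$. Either $\orb C$ avoids $\sigma(0)$ (giving $m - na \in \Z_{\geq 0}$), or else étaleness of $\orb C \to \orb P$ at $\sigma(0)$ requires the tangent map $T_{\sigma(0)}\orb C \to T_0\orb P = \langle \partial_x \rangle$ to be an isomorphism; since $T_{\sigma(0)}\orb C = \ker d\tilde f|_0 \subset \langle \partial_x, \partial_v \rangle$, this forces the $dv$-coefficient of $d\tilde f|_0$ to be nonzero, i.e., $s_{n-1}(0) \neq 0$, yielding $m - (n-1)a \in \Z_{\geq 0}$. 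For smoothness, either $s_n \not\equiv 0$ and $m - na \geq 0$, or $s_n \equiv 0$ and $f = X \cdot g$; smoothness of $\orb C = \sigma + \{g = 0\}$ then forces $\sigma \cap \{g = 0\} = \emptyset$, which via $g|_\sigma = s_{n-1} Y^{n-1}$ requires $s_{n-1}$ nowhere-vanishing on the complete curve $\orb P$, hence $\O(m - (n-1)a)$ trivial and $m - na = -a$.

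For connectedness, the long exact sequence associated to $0 \to \O_{\F_a}(-\orb C) \to \O_{\F_a} \to \O_{\orb C} \to 0$, combined with $H^0(\O(-\orb C)) = 0$ and $H^1(\F_a, \O) = H^1(\orb P, \O) = 0$, reduces the claim to the vanishing of $H^1(\F_a, \O(-n\sigma - mF))$. Relative Serre duality on $\F_a \to \orb P$ yields $R^1\pi_* \O(-n\sigma) = \bigoplus_{j=1}^{n-1} \O(ja)$, so by Leray the group in question equals $\bigoplus_{j=1}^{n-1} H^0(\orb P, \O(ja - m))$, which vanishes precisely when $(n-1)a < m$; this holds whenever $m \geq na$ and $a > 0$. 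In the remaining case $m - na = -a$, the factorization $f = X \cdot g$ places $\{g = 0\}$ in $|(n-1)\sigma + (n-1)aF| = |(n-1)\tau|$, and the nonvanishing of the constant section $s_{n-1}$ shows $\sigma$ is disjoint from it, giving the asserted splitting $\orb C = \sigma \sqcup \{g = 0\}$.

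The main obstacle is separating étaleness from mere smoothness at $\sigma(0)$. A linear $x$-term in $\tilde f$ can produce a smooth $\orb C$ when the local $\mu_r$-weight conditions permit, but the resulting tangent line $\langle \partial_v \rangle$ projects to zero in $T_0\orb P = \langle \partial_x \rangle$, breaking étaleness. Only a linear $v$-term yields étaleness, and this is precisely the condition $s_{n-1}(0) \neq 0$; recognizing this subtlety is what produces the sharp dichotomy $m - na \in \Z_{\geq 0}$ or $m - (n-1)a \in \Z_{\geq 0}$, rather than a weaker fractional condition that smoothness alone would accommodate.
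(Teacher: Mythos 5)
Your proof is correct and takes essentially the same route as the paper: you expand the defining section as $\sum_i s_i X^{n-i}Y^i$ with $s_i \in H^0(\orb P, \O(m-ia))$, use that nonvanishing at the orbifold point forces $m-ia \in \Z_{\geq 0}$, derive the degree formula by adjunction with $\omega_{\F_a/\orb P} = \O(-2\sigma - aF)$, and handle the smooth case by the factorization $f = X\cdot g$ (equivalently the paper's intersection-number argument $\orb C\cdot\sigma = m-na < 0$). The only place you go beyond the paper is the connectedness step, where the paper simply asserts $h^0(\orb C, \O_{\orb C}) = 1$ and you supply the standard Leray/relative-duality computation showing $H^1(\F_a, \O(-n\sigma-mF)) = 0$ when $m > (n-1)a$.
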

\begin{proof}
  We have $\omega_{\F_a/\orb P} = -2\sigma - aF$.
  By adjunction, $\omega_{\orb C/\orb P} = (n-2) \sigma + (m+a)F$.
  Hence
  \[\deg \omega_{\orb C/\orb P} = ((n-2) \sigma + (m-a) F)(n \sigma + m F) = (n-1)(2m-an).\]

  For the next two statements, expand a global section $s$ of $\pi_* \O(n\sigma+mF)$ locally around $0$ as a homogeneous polynomial of degree $n$ in local coordinates $X \oplus Y$ for  $\O \oplus \O(-a)$.
  Say
  \[ s = p_0 X^n + p_1 X^{n-1}Y + \dots + p_{n-1} XY^{n-1} + p_n Y^n,\]
  where $p_i$ is the restriction of a global section of $\O(m-ia)$.
  For $\orb C$ to not pass through $\sigma(0)$, $p_n$ must not vanish at $0$.
  For the zero locus of $s$ to be \'etale over $0$, at least one of $p_n$ or $p_{n-1}$ must not vanish at $0$.
  But $\O(m-ia)$ has a section not vanishing at $0$ if and only if $m-ia$ is a nonnegative integer.

  For the next statements, note that $\orb C \cdot \sigma = m-na$.
  If $\orb C$ is smooth and $m-na < 0$, then $\orb C$ must contain $\sigma$ and have $\sigma \cdot (\orb C \setminus \sigma) = 0$.
  This forces $\orb C$ to be the disjoint union of $\sigma$ and a curve in $|(n-1) \tau|$.
  If $m - na \geq 0$, then we see that $h^0(\orb C, \O_{\orb C}) = 1$, which implies that $\orb C$ is connected.
\end{proof}
\begin{corollary}\label{prop:orbiscroll_4}
  Let $\orb C \subset \F_a$ be a curve in the linear system $4 \sigma + m F$ such that the degree of the ramification divisor of $\orb C \to \orb P$ is $b$.
  Then $m = b/6+2a$.
  If $\orb C$ does not pass through $\sigma(0)$, then $a \leq b/12$.
  If $\orb C$ is \'etale over $0$, then $a \leq b/6$.
  If $\orb C$ is smooth, then either $a \leq b/12$ or $a = b/6$.
\end{corollary}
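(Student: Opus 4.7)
The proof is a direct specialization of \autoref{prop:a bound} to the case $n = 4$, combined with a single Riemann--Hurwitz-style identification.

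First I would establish the formula for $m$. By \autoref{prop:a bound} with $n=4$, we have $\deg \omega_{\orb C / \orb P} = 3(2m - 4a) = 6m - 12a$. On the other hand, since $\orb C \to \orb P$ is a finite map between smooth orbifold curves, the degree of its ramification divisor equals $\deg \omega_{\orb C / \orb P}$. Setting this equal to $b$ gives $6m - 12a = b$, i.e.\ $m = b/6 + 2a$.

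Next I would read off each inequality by substituting $n = 4$ and $m = b/6 + 2a$ into the corresponding clauses of \autoref{prop:a bound}. For the first inequality: if $\orb C$ avoids $\sigma(0)$, then $m - 4a$ is a non-negative integer, so $b/6 - 2a \geq 0$, giving $a \leq b/12$. For the second: if $\orb C$ is \'etale over $0$, then at least one of $m - 4a$ or $m - 3a$ is a non-negative integer; the weaker of the two resulting inequalities, $m - 3a \geq 0$, yields $b/6 - a \geq 0$, i.e.\ $a \leq b/6$. For the third: if $\orb C$ is smooth, \autoref{prop:a bound} forces either $m - 4a \geq 0$ or $m - 4a = -a$; the first case has already been handled and gives $a \leq b/12$, while the second gives $m = 3a$, hence $b/6 + 2a = 3a$, i.e.\ $a = b/6$.

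There is no real obstacle here: every assertion is a mechanical substitution into \autoref{prop:a bound}. The only mild point to be careful about is the identification $\deg \omega_{\orb C/\orb P} = b$, which is just the orbifold Riemann--Hurwitz formula applied to the map $\orb C \to \orb P$ of smooth twisted curves; since $\orb P$ has no generic stabilizer and $\orb C$ is smooth, this is the standard statement and requires no extra argument.
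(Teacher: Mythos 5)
Your proposal is correct and is exactly the intended argument: the paper states this as a corollary of \autoref{prop:a bound} with no separate proof, and your mechanical substitution of $n=4$ together with the identification $b = \deg\omega_{\orb C/\orb P}$ is precisely the specialization the paper has in mind. Nothing is missing.
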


\def\cprime{$'$}
\providecommand{\bysame}{\leavevmode\hbox to3em{\hrulefill}\thinspace}
\providecommand{\MR}{\relax\ifhmode\unskip\space\fi MR }
\providecommand{\MRhref}[2]{  \href{http://www.ams.org/mathscinet-getitem?mr=#1}{#2}
}
\providecommand{\href}[2]{#2}

\bibliographystyle{amsplain}

\end{document}